\newtheorem{thm}{Theorem}[section]
\newtheorem{lem}[thm]{Lemma}
\newtheorem{conjecture}[thm]{Conjecture}
\newcommand{\A}{\mathcal{A}}
\newcommand{\B}{\mathcal{B}}
\newcommand{\D}{\mathcal{D}}
\newcommand{\E}{\mathcal{E}}
\newcommand{\F}{\mathcal{F}}
\newcommand{\G}{\mathcal{G}}
\newcommand{\I}{\mathcal{I}}
\newcommand{\J}{\mathcal{J}}
\newcommand{\mc}[1]{\mathcal{#1}}
\newcommand{\mb}[1]{\mathbb{#1}}
\newcommand{\nib}[1]{\noindent {\bf #1}}
\newcommand{\bfl}[1]{\left\lfloor #1 \right\rfloor}
\newcommand{\bcl}[1]{\left\lceil #1 \right\rceil}
\newcommand{\sub}{\subset}
\newcommand{\Ra}{\Rightarrow}
\newcommand{\sm}{\setminus}
\newcommand{\eps}{\varepsilon}
\newcommand{\es}{\emptyset}
\newcommand{\pl}{\partial}
\newcommand{\aA}{\alpha}
\newcommand{\gG}{\gamma}
\newcommand{\dD}{\delta}
\newcommand{\tT}{\theta}
\title{Stability for vertex isoperimetry in the cube}
\author{Peter Keevash\thanks{Mathematical Institute, University of Oxford, Oxford, UK. 
E-mail: keevash@maths.ox.ac.uk.\newline \hspace*{1.5em}
Research supported in part by ERC Consolidator Grant 647678.}
\and Eoin Long\thanks{Mathematical Institute, University of Oxford, Oxford, UK. 
E-mail: long@maths.ox.ac.uk.}}
\date{\today}
\newcommand{\Lov}[1]{\operatorname{B}_{lov}(|#1|)}
\newcommand{\LovDim}[2]{\operatorname{B}_{lov}^{(#1)}(|#2|)}
\begin{document}

\maketitle

\abstract{
We prove a stability version of Harper's cube vertex isoperimetric inequality,
showing that subsets of the cube with vertex boundary close 
to the minimum possible are close to (generalised) Hamming balls.
Furthermore, we obtain a local stability result for ball-like sets 
that gives a sharp estimate for the vertex boundary
in terms of the distance from a ball, and so our stability result
is essentially tight (modulo a non-monotonicity phenomenon).
We also give similar results for the Kruskal--Katona Theorem
and applications to new stability versions of some 
other results in Extremal Combinatorics.
}

\section{Introduction}

Isoperimetric inequalities have a long history in mathematics,
starting from the classical Euclidean isoperimetric inequality in $\mb{R}^d$ 
that balls minimise surface area among all sets with given volume.
There is also a rich theory of isoperimetric inequalities in the 
discrete setting, which has broad connections to a number of topics, 
including the concentration of measure phenomena, 
random graph and satisfiability thresholds and high-dimensional geometry.
This theory starts with the isoperimetric inequalities for 
the $n$-\emph{cube} $Q_n$, which is the graph on vertex set $\{0,1\}^n$ 
in which vertices are adjacent if they differ in a single coordinate. 
There are two natural notions of boundary for a set $\A \sub \{0,1\}^n$:
the \emph{vertex boundary} 
$\pl_v(\A) = \{x' \in \{0,1\}^n\sm \A: 
xx' \in E(Q_n) \mbox{ for some } x\in \A\}$
and the \emph{edge boundary} $\pl_e(\A) = 
\{xy \in E(Q_n): x \in \A , y \notin \A \}$.

This paper will be concerned with the vertex boundary,
for which the isoperimetric inequality was obtained by Harper \cite{Harper}.
To state his result, we define the simplical order on $\{0,1\}^n = \mc{P}[n]$
by $A < B$ if $|A| > |B|$ or $|A| = |B|$ 
and $\max(A \triangle B) \in B$.
We write $\I_m = \I^{(n)}_m$ 
for its initial segment of size $m$. 
Harper's theorem states that if $\A \subset \{0,1\}^n$ 
with $|\A| = m$ then $|\pl_v(\A)| \ge |\pl_v(\I_m)|$.
Given this inequality, it is natural to ask for which structures
equality holds (extremal configurations) 
or approximate equality holds (stability).
We are not aware of any results on these questions 
in the previous literature (by constrast, there are several 
such results \cite{Ellis, EKL, Friedgut, KKL, KL, Keller-Lifshitz} 
for the edge-isoperimetric inequality in the cube).

Our first result gives a stability result for Harper's theorem
for sets that have the same size as a Hamming ball
$\B = \B^n_{n-k}(C) :=
\{ A \sub \{0,1\}^n: |A \triangle C| \le n-k \}$;
here we note that all such balls have the same vertex-boundary 
(they can be identified by automorphisms of $Q_n$)
and if $m = \tbinom{n}{\ge k} := \sum_{i=k}^n \tbinom{n}{i}$ 
then $\I_m = \tbinom{[n]}{\ge k} := \cup_{i=k}^n \tbinom{[n]}{i}
= \B^n_{n-k}([n])$.

\begin{thm} \label{thm: ball-sized sets}
Suppose $\dD \in (0,1)$ and $\A \sub \{0,1\}^n$ 
with $|\A| = m = \tbinom{n}{\geq k}$ and 
$|\pl_v(\A)| \leq (1 + \tfrac{c}{n}) \tbinom{n}{k-1}$,
with $c = 10^{-3} \dD$. Then 
$|\A\triangle \mc{H}| \leq \dD \tbinom{n-1}{k-1}$ 
for some Hamming ball $\mc{H}$.
Furthermore, if $|\A\triangle \mc{H}|=2D$
then $|\pl_v(\A)| \ge |\pl_v(\J)|$
where $\J = \I_{m-D} \cup (\I_{m+D} \sm \I_m)$.
\end{thm}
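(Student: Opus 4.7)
My approach splits into two stages, sharing a common reduction to the canonical initial segment $\mc{I}_m = \tbinom{[n]}{\ge k}$ (which is itself the Hamming ball centred at $[n]$) via codimension-$1$ compressions. Recall that each compression $C_{ij}$ ($i<j$), which replaces $A$ with $A \ni j$, $A \not\ni i$ by $(A\sm\{j\})\cup\{i\}$ when possible, preserves $|\A|$ and does not increase $|\pl_v(\A)|$. After iterating, we may assume $\A$ is left-compressed; any proximity of the compressed set to $\mc{I}_m$ we obtain translates back, through the sequence of coordinate automorphisms implicit in the compressions, to proximity of the original $\A$ to some Hamming ball $\mc{H}$.

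\emph{Stage 1 (approximate stability).} Decompose $\A$ by levels $\A_i = \A \cap \tbinom{[n]}{i}$. The minimum $|\pl_v| = \tbinom{n}{k-1}$ is attained exactly when $\A_i = \tbinom{[n]}{i}$ for $i \ge k$ and $\A_i = \es$ for $i < k$; any deviation --- ``holes'' at an upper level or ``extras'' at a lower level --- forces extra boundary. Using Kruskal--Katona (extras at a level $\le k-2$ already have lower shadow overwhelming the slack $\tfrac{c}{n}\tbinom{n}{k-1}$, and dually for holes above level $k$) one argues that all significant defects lie between levels $k-1$ and $k$, and quantitatively that $|\A \triangle \mc{I}_m| \le \dD\tbinom{n-1}{k-1}$. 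Undoing the compressions gives the Hamming ball $\mc{H}$ required.

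\emph{Stage 2 (sharp $\J$-inequality).} After relabelling, write $\A = (\mc{I}_m \sm R) \cup S$ with $|R| = |S| = D$. By Stage 1 we may essentially take $R \sub \tbinom{[n]}{k}$ and $S \sub \tbinom{[n]}{k-1}$; stray elements at other levels can be shown to only increase $|\pl_v(\A)|$ by a separate, easier comparison. I would then show by two shifting steps that $|\pl_v(\A)|$ is minimised when $R = \mc{I}_m \sm \mc{I}_{m-D}$ (the $D$ simplicially-largest $k$-sets, i.e.\ the tail of $\mc{I}_m$) and $S = \mc{I}_{m+D} \sm \mc{I}_m$ (the $D$ simplicially-smallest $(k-1)$-sets), giving $\A = \J$ on the nose. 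Concretely, swapping a ``later'' set of $R$ for an ``earlier'' $k$-set not already used should not increase $|\pl_v(\A)|$ (by analysis of the upper shadow of $R$), and symmetrically for $S$ (via the lower shadow of $S$).

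\emph{Main obstacle.} The hardest step will be the shifting in Stage 2, because $\pl_v(\A)$ does not decompose as a clean sum of contributions from $R$ and $S$: elements of $S$ are often neighbours of elements of $\mc{I}_m \sm R$, and removing an element of $R$ can eliminate a boundary point already generated by a nearby element of $S$. Ensuring monotonicity under each elementary shift requires careful analysis of these interactions at the ``junction'' between levels $k-1$ and $k$. A secondary subtlety is the non-monotonicity of $|\pl_v(\J)|$ in $D$ alluded to in the abstract: one cannot assume $D$ is especially small beyond the bound $D \le \tfrac12\dD\tbinom{n-1}{k-1}$ from Stage~1, so the Stage 2 shifting must remain valid throughout this whole range of $D$.
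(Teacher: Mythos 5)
Your plan has a fundamental gap in Stage 1, centred on the sentence ``any proximity of the compressed set to $\I_m$ we obtain translates back, through the sequence of coordinate automorphisms implicit in the compressions, to proximity of the original $\A$ to some Hamming ball $\mc{H}$.'' Compressions $C_{ij}$ are not automorphisms of the cube, so proximity to $\I_m$ of the fully compressed family does \emph{not} propagate backwards for free. Indeed, a single compression can move a family that is far from every Hamming ball to one that is close to $\I_m$, and nothing in the hypothesis on $|\pl_v(\A)|$ alone prevents the cumulative effect of many such compressions from having done exactly that. Controlling this is precisely the heart of the paper's argument: the proof (see Theorem~\ref{thm: ball-sized stability}) runs the compression sequence of Theorem~\ref{thm: compression theorem} \emph{backwards}, and at each decompression step uses the boundary hypothesis via a local-stability lemma (Lemma~\ref{vbcorrection2}) to rule out a slow drift away from a ball. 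Without an analogous ``decompression control'' step your Stage~1 only proves a statement about the compressed family, not about $\A$.

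There is a second, more technical, issue with the choice of compressions: the codimension-one compressions $C_{ij}$ (your $U=\{j\}$, $V=\{i\}$) fix each level $\tbinom{[n]}{r}$ setwise, so a left-compressed family can still have a level profile very different from that of $\tbinom{[n]}{\ge k}$ (the `projected Hamming ball' of Remark (i) is left-compressed but not close to any Hamming ball). You would then have to rule out such profiles purely via Kruskal--Katona level-by-level, and deal with the subtleties of how the vertex boundary of $\A$ decomposes across levels (boundary points contributed by upper and lower shadows of adjacent levels overlap in non-obvious ways). The paper instead uses compressions with $|V|=|U|+1$, which do move mass between levels and converge to a ball-like upset; the level structure then comes for free.

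For Stage 2 (the `furthermore' inequality) you propose a two-phase shifting argument and rightly flag the interaction between $R$ and $S$ as the main obstacle --- it is, and getting each elementary shift to be monotone in $|\pl_v|$ through the whole range of $D$ is delicate. The paper avoids shifting altogether: Lemma~\ref{lem: local stability} proves a submodularity inequality $|N^i(\A)|+|N^i(\G)|\ge |N^i(\A\cap\G)|+|N^i(\A\cup\G)|$ for iterated neighbourhoods, from which the bound $|\pl_v(\A)|\ge|\pl_v(\J_{m,D,E})|$ follows by applying Harper's theorem to $\A\cap\G$ and $\A\cup\G$ together with the exact identity of Lemma~\ref{lem:plJ}. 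That route sidesteps the $R$--$S$ interaction entirely and is worth adopting in place of the shifting plan.
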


\nib{Remarks.}
\begin{enumerate}[(i)]
\item Theorem \ref{thm: ball-sized sets}
is tight up to the value of the constant $c$.
For example, if $n=2k-1$ is odd then a `projected Hamming ball' 
$\A = \{A \sub [n]: |A \cap [n-2]| \ge k-1\}$
has size $|\A| = 2^{n-1} = \tbinom{n}{\ge k}$,
boundary $|\pl_v(\A)| = 4 \tbinom{n-2}{k-2}
= \tfrac{4(k-1)(n-k+1)}{n(n-1)}  \tbinom{n}{k-1}
= \big( 1 + \tfrac{1}{n} \big) \tbinom{n}{k-1}$
but $|\A\triangle \mc{H}| \ge \tbinom{n-1}{k-1}$
for any Hamming ball $\mc{H}$. 
\item The `furthermore' statement 
of Theorem \ref{thm: ball-sized sets} is a strong `local stability' 
result that gives a sharp estimate for the vertex boundary
in terms of the distance from a ball; it implies that
if the first statement holds with any value of $c$
then it in fact holds with an essentially optimal value.
In particular, we obtain uniqueness of the extremal configurations:
if $|\pl_v(\A)| = \tbinom{n}{k-1}$ then $\A$ is a Hamming ball.
\item It is tempting to guess that the local stability result
determines the exact dependence of $c$ on $\dD$,
i.e.\ the minimum possible value of $|\pl_v(\A)|$ 
over all $\A$ with $|\A| = m$ and given
$|\A\triangle \mc{H}| < \tbinom{n-1}{k-1}$.
Somewhat surprisingly, this is not true, 
as the minimum value of $|\pl_v(\A)|$
is not monotone in $|\A\triangle \mc{H}|$.
For example, if $n=5$ and $k=3$ (so $m=16$)
then for $D=0,1,2,3,4$ we have
$|\pl_v(\J)|=10,12,13,12,13$.
\end{enumerate}

As Theorem \ref{thm: ball-sized sets} describes the
stability of Harper's theorem for special values of $m=|\A|$, 
one will naturally ask next about general $m$, say 
$m = \tbinom{n}{\ge k+1} + m'$ with $0 \le m' < \tbinom{n}{k}$.
Here we note that if $\A = \tbinom{[n]}{\ge k+1} \cup \mc{C}$
where $\mc{C} \sub \tbinom{[n]}{k}$ with $|\mc{C}| = m'$
then $|\pl_v(\A)| = \tbinom{n}{k} - m' + |\pl \mc{C}|$,
where $\pl \mc{C} = \{B \in \tbinom {[n]}{k-1}: 
B \sub A \mbox { for some } A \in \mc{C} \}$
is the \emph{(lower) shadow} of $\mc{C}$.
By Harper's theorem, $|\pl_v(\A)| \ge |\pl_v(\I_m)|
= \tbinom{n}{k} - m' + |\pl \I^{(k)}_{m'}|$,
where $\I^{(k)}_{m'}$ is the initial segment of length $m'$
in the \emph{colex} order on $\tbinom {[n]}{k}$
(where $A < B$ if $\max(A \triangle B) \in B$).
Equivalently, $|\pl \mc{C}| \ge |\pl \I^{(k)}_{m'}|$,
which is the Kruskal--Katona theorem (see \cite{Kr,Ka}).
Thus a stability result for the Kruskal--Katona theorem
is a prerequisite for one in the general case of Harper's theorem.

The extremal configurations in the Kruskal--Katona theorem
were classified by F\"uredi and Griggs \cite{Furedi-Griggs} 
and independently by M\"ors \cite{Mors}. In the stability context, 
it is more convenient\footnote{
The exact function implicit in the Kruskal--Katona theorem
is rather pathological: Frankl, Matsumoto, Ruzsa and Tokushige \cite{KK-Takagi} 
proved that an appropriate rescaling converges to the Takagi function, 
which is continuous but nowhere differentiable.} 
to work with the following slightly weaker version 
of the Kruskal--Katona theorem due to Lov\'asz \cite{Lovasz}:
regarding $\tbinom{x}{k} = x(x-1)\dots (x-k+1)/k!$ 
as a polynomial in $x \in \mb{R}$, if $\A \sub \tbinom{[n]}{k}$
and $|\A| = \tbinom{x}{k}$ with $x\geq k$ 
then $|\pl(\A)| \geq \tbinom{x}{k-1}$. 
Keevash \cite{Keevash} gave a stability\footnote{
Our use of the term `stability' in this paper refers to
results that are also known as `99\% stability' results,
in that they describe structures that are very close to optimal.
In many contexts it is also interesting to describe some properties
of structures that are only within a constant factor of optimal;
such a `1\% stability' result for the Kruskal--Katona theorem
was given by O'Donnell and Wimmer \cite{OD-W}.}
version of this result, showing that for any $k \in \mb{N}$ and $\dD>0$ 
there is $\eps>0$ so that for $\A \sub \tbinom{[n]}{k}$,
if $|\A| =  \tbinom{x}{k}$ with $x\geq k$
and $|\pl (\A)| < (1+\eps) \tbinom{x}{k-1}$
then $|\A \triangle \tbinom{S}{k}| < \dD \tbinom{x}{k}$
for some $S \in \tbinom{[n]}{\bcl{x}}$.
Our next theorem concerns sets that are somewhat closer to 
a clique (with distance on a scale of $\tbinom{x-1}{k-1}$ 
rather than $\tbinom{x}{k}$), for which
we give a stronger stability result
with parameters that are tight
up to the value of the constant $c$.
Furthermore, as in Theorem \ref{thm: ball-sized sets},
we obtain a strong `local stability' result that gives a sharp estimate 
for the shadow boundary in terms of the distance from a clique,
which implies an essentially optimal dependence of parameters
(again with the non-monotonicity caveat).
In particular, this gives another proof 
for uniqueness of the extremal examples in the
Lov\'asz form of Kruskal--Katona,
i.e.\ if $|\pl(\A)| = \tbinom{x}{k-1}$ then $x \in \mb{N}$ 
and $\A=\tbinom{S}{k}$ for some $S \in \tbinom{[n]}{x}$.
For $S \sub [n]$ we define 
\begin{equation} \label{Jk}
\J^{(k)}_{|S|, E_1,E_2}:= 
\I^{(k)}_{\tbinom {|S|-1}{k} + E_1} \cup 
\big ( \I^{(k)}_{\tbinom {|S|}{k} + E_2} \sm 
\I^{(k)}_{\tbinom {|S|}{k}} \big ).
\end{equation}

\begin{thm} \label{thm: KK-stab}
If $\dD_0 \in (0,1)$ and $\A \sub \tbinom{[n]}{k}$ 
with $|\A| = \tbinom{x}{k}$ and 
$|\pl(\A)| \leq (1 + \tfrac{c}{x}) \tbinom{x}{k-1}$,
with $c = 10^{-9} \dD_0$, then 
$|\A\triangle \tbinom {S}{k}| 
\le \dD_0 \tbinom{|S|-1}{k-1}$ for some $S \sub [n]$.
Furthermore, if $|\A \cap {\tbinom {S}{k}}| 
= \tbinom{|S|-1}{k} + E_1$ and 
$|\A \sm {\tbinom {S}{k}}| = E_2$ 
where $0 \leq E_1, E_2 \leq \tbinom {|S|-1}{k-1}$ 
then $|\pl(\A)| \geq 
|\pl (\J^{(k)}_{|S|, E_1,E_2})|$.
\end{thm}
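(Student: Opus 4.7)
The plan is to establish the local (``furthermore'') statement first, from which the main quantitative bound will follow by writing $|\pl(\J^{(k)}_{s,E_1,E_2})|$ explicitly and comparing against the hypothesis. To locate the candidate $S$, I would invoke the weaker $1\%$-stability result of Keevash \cite{Keevash} cited above Theorem \ref{thm: KK-stab}: applied to $\A$ with a suitable choice of parameter depending on $c$, it produces $S\sub[n]$ with $|\A\triangle\tbinom{S}{k}|\le\eps\tbinom{x}{k}$ for a small $\eps=\eps(c)$. This forces $|S|=s:=\bcl{x}$ and
$$E_1+E_2=\tbinom{x}{k}-\tbinom{s-1}{k}\in[0,\tbinom{s-1}{k-1}),$$
so that $E_1,E_2\in[0,\tbinom{s-1}{k-1}]$ automatically lie in the range required by the ``furthermore'' statement.

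For the local bound $|\pl(\A)|\ge|\pl(\J^{(k)}_{s,E_1,E_2})|$ I would use a \emph{relative} compression argument. Apply $(i,j)$-compressions with both $i,j\in S$, and separately with both $i,j\in[n]\sm S$: these preserve the decomposition $\A=\A_0\sqcup\A_1$, where $\A_0=\A\cap\tbinom{S}{k}$ and $\A_1=\A\sm\tbinom{S}{k}$, so the profile $(E_1,E_2)$ is untouched, and they cannot increase $|\pl(\A)|$. After saturation, $\A_0$ becomes the colex initial segment of size $\tbinom{s-1}{k}+E_1$ in $\tbinom{S}{k}$. An exchange argument should then show that one may also assume the ``outside part'' $A\sm S$ of every $A\in\A_1$ is exactly the single vertex $s+1$: replacing a set with $\ge 2$ outside coordinates by one with a single outside coordinate costs less in the external shadow than it saves internally, given the profile. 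Combined with left-compression of the $S$-coordinates, this reduces $\A_1$ to $\{s+1\}\cup\C$ for a colex initial segment $\C\sub\tbinom{S}{k-1}$ of size $E_2$, so $\A=\J^{(k)}_{s,E_1,E_2}$ exactly. The shadow is then read off from this canonical form, using that $\pl(\A_0)\supseteq\tbinom{[s-1]}{k-1}$ absorbs the ``internal'' contributions from $\A_1$ while Kruskal--Katona in Lov\'asz form controls the ``external'' contributions.

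Given the local bound, the main statement follows by writing $|\pl(\J^{(k)}_{s,E_1,E_2})|$ explicitly and comparing with $(1+c/x)\tbinom{x}{k-1}$: this forces both $\tbinom{s-1}{k-1}-E_1$ and $E_2$ to be at most $\tfrac{\dD_0}{2}\tbinom{s-1}{k-1}$, giving
$$|\A\triangle\tbinom{S}{k}|=\big(\tbinom{s-1}{k-1}-E_1\big)+E_2\le\dD_0\tbinom{s-1}{k-1},$$
with the factor $10^{-9}$ in $c$ absorbing the implicit constants.

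The hard part will be the ``outside-to-inside'' step in the compression argument: cross-compressions $(i,j)$ with $i\in S$, $j\in[n]\sm S$ would shift members of $\A_1$ into $\A_0$ and alter the profile, so they are forbidden, and one must instead argue directly that consolidating all outside coordinates onto a single vertex strictly reduces the shadow. Executing this precisely --- and navigating the non-monotonicity of the minimum shadow in $(E_1,E_2)$ noted in the remarks after Theorem \ref{thm: ball-sized sets}, which prevents any naive induction on $E_1+E_2$ --- is the delicate heart of the argument and the reason one only obtains a small constant $c$.
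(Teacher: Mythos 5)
Your proposal diverges from the paper's proof in two important places, and in both places there is a real gap.

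\textbf{Local stability.} The paper proves the ``furthermore'' statement (Lemma~\ref{lem: local stability KK}) without any compressions. Writing $\A_2 = \A \sm \tbinom{S}{k}$ and, for each $t>s$, letting $\A_2^t = \{A-t : A\in\A_2,\ \max A = t\}\sub\tbinom{[n]}{k-1}$, one observes that $\pl(\A_1)$ and the families $(\pl\A_2^t)+t$ are pairwise disjoint subsets of $\pl(\A)$, so $|\pl(\A)| \ge |\pl(\A_1)| + \sum_{t>s}|\pl(\A_2^t)|$; Kruskal--Katona plus the subadditivity of shadows on initial segments (Lemma~\ref{kkprops}.iv) then give $|\pl(\A)|\ge|\pl(\J^{(k)}_{s,E_1,E_2})|$. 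Your consolidation step --- arguing that one may collapse all outside coordinates of each $A\in\A_1$ onto a single vertex $s+1$ without increasing the shadow --- is the part you yourself flag as delicate, and it is exactly the step that is never needed: the paper's disjointness-by-maximum trick handles many outside vertices at once. As stated your exchange claim is not justified and the non-monotonicity you cite is a real obstacle to making it rigorous, so this is a genuine gap rather than a detail to fill in.

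\textbf{Locating $S$.} You propose to find $S$ via Keevash's earlier stability theorem. There are two problems with this. First, Keevash's result produces an $\eps$ depending on $k$ (``for any $k\in\mb{N}$ and $\dD>0$ there is $\eps>0$''), while Theorem~\ref{thm: KK-stab} asserts a single absolute constant $c=10^{-9}\dD_0$ in front of $1/x$ valid uniformly in $k$; without an explicit quantitative form of the earlier theorem you cannot conclude. Second, and more fundamentally, Keevash's result gives $|\A\triangle\tbinom{S}{k}|<\dD'\tbinom{x}{k}$, whereas the theorem asserts the much finer $\dD_0\tbinom{|S|-1}{k-1}$ (smaller by a factor $\sim k/x$); the passage from the coarse to the fine scale is not a routine comparison against $(1+c/x)\tbinom{x}{k-1}$ as you suggest. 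In particular, the coarse bound does not even guarantee that $|\A\cap\tbinom{S}{k}|\ge\tbinom{|S|-1}{k}$ (i.e.\ $E_1\ge 0$) or that $E_2\le\tbinom{|S|-1}{k-1}$, which are needed to invoke the local bound. The paper avoids Keevash's theorem entirely: it applies Daykin's compression sequence (Theorem~\ref{thm: KK compress}) and runs the sequence \emph{backwards}, maintaining at every step a set $S_i$ with $|\tbinom{S_i}{k}\sm\A_i|\le\dD\tbinom{M-1}{k-1}$, using the fact that a single compression moves this quantity by at most a controlled amount and then invoking the boosting Lemma~\ref{lem: KK clique size}.ii (itself relying on the concavity estimates of Lemma~\ref{defectapp2}) to snap the coarse estimate back to the fine scale. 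This decompression-with-boosting mechanism is the paper's central new technique and is what your outline replaces with a citation; that replacement is where the argument breaks.
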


Now we return to the structural characterisation of
Harper's theorem for general sizes of the family $\A$.
Given the stability results in Theorems
\ref{thm: ball-sized sets} and \ref{thm: KK-stab},
one might conjecture a similar stability statement 
for initial segments of the simplicial order.
However, this is not true, as there is another
extremal configuration! Suppose 
$m = \tbinom{n}{\ge k+1} + \tbinom{s}{k}$ 
with $k \le s \le n$. 
Let \[ \G_1 = \tbinom{[n]}{\ge k+1} \cup \tbinom{[s]}{k} 
\quad \text{ and } \quad
\G_2 = \tbinom {[n]}{\geq k+1} 
\cup \tbinom{[s-1]}{k} \cup \tbinom{[s-1]}{k-1}.\] 
Then $\G_1 = \I_m$ is the initial segment of size $m$ 
in the simplical order, which is extremal by Harper's theorem.
Also, $|\G_2| = |\G_1| = m$ and
$|\pl_v(\G_2)| = \tbinom{n}{k} - \tbinom{s-1}{k} + \tbinom{s-1}{k-2}
=  \tbinom{n}{k} - \tbinom{s}{k} + \tbinom{s}{k-1} = |\pl_v(\G_1)|$.
Furthermore, if $s<n$ then $\G_1$ and $\G_2$ are not isomorphic.
We refer to $\G_1$ and $\G_2$ as generalised Hamming balls.
Our general stability result for Harper's theorem roughly says
that any family that is close to extremal must be close 
to a generalised Hamming ball. As for our stability result for 
Kruskal--Katona, our benchmark will be the corresponding Lov\'asz form 
of the vertex isoperimetric inequality:  if $\A \sub \{0,1\}^n$ with 
$|\A| = \tbinom {n}{\geq k+1} + \tbinom{x}{k}$ then
\begin{align}
\label{equation: coarse VIP bound}
|\pl_v(\A)| \geq \Lov{\A} := 
\tbinom {n}{k} - \tbinom {x}{k} + \tbinom {x}{k-1}.
\end{align}
Again, our parameters are essentially optimal,
as we also obtain a local stability result, 
with respect to the constructions
\[ \J_{m,D,E}:= \I_{m-D} \cup (\I_{m+E} \sm \I_m). \]

\begin{thm}
\label{thm: VIP stability}
Let $\dD \in (0,1)$, $c = 10^{-10} \dD$ and $\A \sub \{0,1\}^n$ 
with $|\A| = \tbinom{n}{\geq k+1} + \tbinom{x}{k}$ for some $k \ge 2$.
If $|\pl_v(\A)| \leq \Lov{\A} + \tfrac{ck(x-k)}{x^3} \tbinom{x}{k-1}$
then $|\A \triangle \G| \leq \dD \tbinom{x-3}{k-2}$ 
for some generalised Hamming ball $\G$.
Furthermore, writing $m=|\G |$, $D=|\G \sm A|$, $E=|\A \sm \G|$,
we have $|\pl_v(\A)| \ge |\pl_v(\J_{m,D,E})|$.
\end{thm}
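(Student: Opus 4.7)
The plan is to reduce Theorem \ref{thm: VIP stability} to the Kruskal--Katona stability (Theorem \ref{thm: KK-stab}) by first showing that any near-extremal $\A$ must be concentrated near levels $k$ and $k-1$ with essentially all of $\tbinom{[n]}{\ge k+1}$ already present, and then to deduce the local stability bound by a perturbative compression argument.

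First I would apply iterated left-compressions $S_{ij}$ for $i<j$ to $\A$; these preserve $|\A|$ and do not increase $|\pl_v(\A)|$, so we may assume $\A$ is left-compressed. Write $\A_i = \A \cap \tbinom{[n]}{i}$, $a_i = |\A_i|$, and for $i \ge k+1$ set $d_i = \tbinom{n}{i} - a_i$. The size identity gives $\sum_{i \le k} a_i = \tbinom{x}{k} + \sum_{i \ge k+1} d_i$. Each deficit element $B$ at a level $i \ge k+1$ lies in $\pl_v(\A)$ once it has any neighbour in $\A_{i+1}$ or $\A_{i-1}$, which it does unless several of its neighbours are themselves deficits; so up to a lower-order correction, the deficit contributes $\sum_{i \ge k+1} d_i$ to $|\pl_v(\A)|$ above the ``expected'' amount $\Lov{\A}$. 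Similarly, each element of $\A_i$ at a level $i \le k-2$ forces extra elements of $\pl_v(\A)$ at level $i-1$ via its shadow. Combined with $|\pl_v(\A)| \le \Lov{\A} + \tfrac{ck(x-k)}{x^3}\tbinom{x}{k-1}$, these observations force the deficit $\sum_{i \ge k+1} d_i$ and the low-level mass $\sum_{i \le k-2} a_i$ each to be a small multiple of $\tfrac{ck(x-k)}{x^3}\tbinom{x}{k-1}$.

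With the main mass confined to levels $k$ and $k-1$, write $\A = \tbinom{[n]}{\ge k+1} \cup \C \cup \D \cup \mc{R}$ with $\C \sub \tbinom{[n]}{k}$, $\D \sub \tbinom{[n]}{k-1}$ and $\mc{R}$ (together with the higher-level deficits) negligible. The inequality $|\pl_v(\A)| \ge \tbinom{n}{k} - |\C| + |\pl(\C) \sm \D|$ holds up to a controlled error, so $\pl(\C) \sm \D$ is close to Kruskal--Katona-tight; applying Theorem \ref{thm: KK-stab} to $\C$ concludes that $\C$ is close to a clique $\tbinom{S}{k}$. The two candidate structures $\G_1$ and $\G_2$ then emerge according to whether $\D$ is essentially empty or essentially the full lower clique $\tbinom{S'}{k-1}$, with $S'$ aligned to $S$ by left-compression. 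The distance bound $|\A \triangle \G| \le \dD \tbinom{x-3}{k-2}$ follows by tracking constants with $c = 10^{-10}\dD$ and choosing the Kruskal--Katona parameter $\dD_0$ from Theorem \ref{thm: KK-stab} as a small polynomial in $\dD$.

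For the furthermore statement, fix the selected $\G$ and write $\A = (\G \sm \mc{X}) \cup \mc{Y}$ with $|\mc{X}|=D$ and $|\mc{Y}|=E$. The difference $|\pl_v(\A)|-|\pl_v(\G)|$ decomposes into the new shadow contributions of $\mc{Y}$ outside $\G$ and the uncovering contributions from $\mc{X}$; a localised compression argument then shifts $\mc{X}$ and $\mc{Y}$ to be colex-initial within $\G$ and within $\I_{m+E} \sm \I_m$ respectively without increasing $|\pl_v(\A)|$, producing exactly the configuration $\J_{m,D,E}$ and yielding $|\pl_v(\A)| \ge |\pl_v(\J_{m,D,E})|$. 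The main obstacle will be separating the contributions of $\C$ and $\D$ in the near-extremal analysis, since they partly cancel through $\D \sub \pl(\C)$; this separation is what forces the correct selection between $\G_1$ and $\G_2$ and what sharpens the distance bound to the scale $\tbinom{x-3}{k-2}$. The non-monotonicity noted after Theorem \ref{thm: ball-sized sets} also means that the local inequality cannot be replaced by a clean monotone bound in $D+E$ and must go through the explicit comparison with $\J_{m,D,E}$.
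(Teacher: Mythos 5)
Your proposal takes a genuinely different route from the paper, and while the overall plan is sensible in outline, there are gaps at the two places where the paper does its hardest work.

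The paper does not try to show directly that $\A$ is concentrated on levels $k$ and $k-1$. Instead it invokes Theorem~\ref{thm: compression theorem}, producing a sequence of $(U,V)$-compressions with $|V|=|U|+1$ (so level-changing, not the shifts $S_{ij}$ you propose) that gradually transforms $\A$ into a ball-like family $\tbinom{[n]}{\ge k+1}\cup\B$, applies Theorem~\ref{thm: KK-stab} to $\B$, and then analyses the \emph{reversal} of the compressions: Lemma~\ref{lem: decompressing upsets} for the upset phase and Lemma~\ref{lem: gen ball approx} for the $C_{\es,\{i\}}$ phase (which is where the second type of generalised ball $\G_2$ appears). The reason this indirection is needed is precisely the step you wave at: the claim that ``up to a lower-order correction, the deficit contributes $\sum_{i\ge k+1}d_i$ to $|\pl_v(\A)|$'' does not follow from the boundary hypothesis without the careful submodularity machinery. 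If the deficits form a connected sub-ball in the upper levels, their boundary contribution is governed only by the boundary of that sub-ball, not its cardinality; the paper controls this via Lemma~\ref{lem: local stability} (submodularity for iterated neighbourhoods) and the specific defect estimates in Lemma~\ref{cleandefect}, Lemma~\ref{vbcorrection1} and Lemma~\ref{vbcorrection2}, all tuned to the scale $\tfrac{ck(x-k)}{x^3}\tbinom{x}{k-1}$. The cancellation $\D\sub\pl(\C)$ that you correctly flag as the main obstacle is exactly where these lemmas, not a naive deficit count, are needed.

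The ``furthermore'' argument also cannot proceed as you sketch. Compressing $\mc{X}=\G\sm\A$ and $\mc{Y}=\A\sm\G$ to be colex-initial while fixing $\A\cap\G$ is not a standard operation known to be monotone in the vertex boundary; the inside and outside parts interact through $\pl_v$, so you would have to reprove a compression lemma adapted to this constrained setting, and that is a genuine piece of work. The paper avoids this entirely: Lemma~\ref{lem: local stability} gives the clean submodular inequality $|\pl_v(\A)|+|\pl_v(\G)|\ge|\pl_v(\A\cap\G)|+|\pl_v(\A\cup\G)|$, and combined with Harper's theorem (applied to $\A\cap\G$ and $\A\cup\G$) and the additivity identity of Lemma~\ref{lem:plJ}, the bound $|\pl_v(\A)|\ge|\pl_v(\J_{m,D,E})|$ drops out in a few lines. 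You should replace the ``localised compression'' step with this submodularity argument, which is both simpler and actually correct. Finally, note that shifts $S_{ij}$ preserve the level distribution $(|\A_i|)_i$, so left-compressing alone cannot help move mass between levels; that part of the reduction must come from the deficit analysis, not the compression.
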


Note that the assumption $k \ge 2$ in Theorem \ref{thm: VIP stability}
is necessary, as if $|\A| > \tbinom{n}{\geq 2}$ 
then $\pl_v(\A) = \{0,1\}^n \sm \A$, 
regardless of the structure of $\A$, so there is no stability.

We also give several applications of the above theorems to 
stability versions of other results in Extremal Combinatorics.
We start with the classical Erd\H{o}s-Ko-Rado theorem \cite{EKR},
that if $k \le n/2$ and $\A \sub \tbinom {[n]}{k}$ 
is intersecting ($A \cap B \neq \es $ for all $A,B \in \A$)
then $|\A| \le \tbinom{n-1}{k-1}$,
and if $k<n/2$ then equality holds only for a star
${\cal S}_i = \{A \in \tbinom {[n]}{k}: i\in A \}$.
There are many stability versions of this inequality in the literature (see 
\cite{BNR, DT, Dev-Kahn, EKL-EKR, Frankl-EKR-stab, Friedgut-EKR, Keevash}).

Here we will prove a tight stability result for intersecting families
with size sufficiently close to that of a star, which determines exactly
how large such a family can be in terms of the number $E$ of sets outside the star.
Given $E \le \tbinom {n-2}{k-1}$, we show that there is an extremal family 
$\F_E = \F^{out}_{E} \cup  \F^{in}_{E}$,
where $\F^{out}_{E}$ consists of the final $E$ sets 
of $\tbinom {[n] \sm \{1\}}{k}$ in colex order, 
and $\F^{in}_{E} \sub {\cal S}_1$ consists of all
sets in the star that intersect all sets in $\F^{out}_{E}$.
Note that as $E \le \tbinom {n-2}{k-1}$ all sets in $\F^{out}_{E}$
contain $n$, so $\F_E$ is intersecting.

\begin{thm} \label{EKR-stability}
Let $\tT \in (0,1/4)$, $c = 10^{-12}\tT$ 
and $n, k \in {\mathbb N}$ with $2k<n$.
Suppose $\A \subset \tbinom {[n]}{k}$ is intersecting.
If $|\A| \geq \big ( 1 - \tfrac {c(n-2k)}{n} \big ) \tbinom {n-1}{k-1}$ 
then there is a star ${\cal S}$ with
$E := |\A \sm {\cal S}| \leq 2\tT |{\cal S}|$.
Furthermore, $|\A| \leq |\F_E|$. 
In particular, if $E = \tbinom{u}{n-k-1}$ where $u \leq n-2$ 
then $|\A| \leq \tbinom {n-1}{k-1} - \tbinom {u}{k-1} + E$.
\end{thm}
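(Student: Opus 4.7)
The argument has two stages, corresponding to the coarse closeness-to-a-star claim and the tight local bound. For the coarse stage, the plan is to apply Theorem~\ref{thm: KK-stab} via the classical Kruskal--Katona route to EKR. Let $\A^c := \{[n]\sm A : A\in\A\} \sub \tbinom{[n]}{n-k}$. Since $\A$ is intersecting, every $k$-subset of a member of $\A^c$ is disjoint from some $A\in\A$ and hence lies outside $\A$; so if $\mc{T}_k$ denotes the family of $k$-subsets of members of $\A^c$ (the iterated shadow from level $n-k$ down to $k$) then $\A\cap\mc{T}_k=\es$, giving $|\mc{T}_k|\le\tbinom{n}{k}-|\A|$. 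The hypothesis on $|\A|$ thus forces $|\mc{T}_k|$ to be within a small $(1+O(c)/n)$-factor of its Lov\'asz--Kruskal--Katona minimum, and iterating Theorem~\ref{thm: KK-stab} through the $n-2k$ shadow levels identifies $\A^c$ with a clique $\tbinom{S}{n-k}$ for some $S$ of size $n-1$. Equivalently, $\A$ is close in Hamming distance to the star ${\cal S}={\cal S}_i$ with $\{i\}=[n]\sm S$, which gives the first assertion with the desired value of $E/|{\cal S}|$.

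For the local stage, fix this star ${\cal S}_i$ and split $\A = \A_1\sqcup\A_0$ with $\A_1=\A\cap{\cal S}_i$ and $\A_0=\A\sm{\cal S}_i$, so $E=|\A_0|$. Define
\[ \A_1' := \{A\sm\{i\} : A\in\A_1\}\sub\tbinom{[n]\sm\{i\}}{k-1}, \quad \A_0^* := \{[n]\sm(A\cup\{i\}) : A\in\A_0\}\sub\tbinom{[n]\sm\{i\}}{n-k-1}. \]
For every $D\in\A_1'$ and $A\in\A_0$, the intersecting property combined with $i\notin A$ gives $D\cap A\ne\es$, equivalently $D\not\sub[n]\sm(A\cup\{i\})$. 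Thus $\A_1'$ is disjoint from the family $\mc{T}_{k-1}$ of $(k-1)$-subsets of members of $\A_0^*$, which yields
\[ |\A|=|\A_1'|+E\le\tbinom{n-1}{k-1}-|\mc{T}_{k-1}|+E. \]
By the iterated Lov\'asz--Kruskal--Katona theorem, $|\mc{T}_{k-1}|$ is minimised when $\A_0^*$ is the colex initial segment of $\tbinom{[n]\sm\{i\}}{n-k-1}$ of size $E$. Under the relabelling $i\leftrightarrow 1$, complementation in $\{2,\ldots,n\}$ reverses colex order and identifies this initial segment precisely with $\{[n]\sm(A\cup\{1\}): A\in\F^{out}_E\}$; the corresponding extremal $\A_1'$ becomes $\{F\sm\{1\}: F\in\F^{in}_E\}$, so $|\A|\le|\F_E|$. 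For $E=\tbinom{u}{n-k-1}$ with $u\le n-2$, the colex initial segment of this size is $\tbinom{[u]}{n-k-1}$, whose $(k-1)$-shadow is $\tbinom{[u]}{k-1}$ of size $\tbinom{u}{k-1}$, yielding the explicit bound $|\A|\le\tbinom{n-1}{k-1}-\tbinom{u}{k-1}+E$.

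The main obstacle is the coarse step: one must transfer the stability error through the $n-2k$ shadow levels between $n-k$ and $k$ without it blowing up. Invoking the sharp "furthermore" local-stability content of Theorem~\ref{thm: KK-stab} at each iteration (rather than only the coarse part, which would lose a multiplicative factor per level) keeps the Hamming distance to a clique bounded linearly in the shadow excess, and the factor $(n-2k)/n$ in the hypothesis is exactly what accommodates this $n-2k$-fold iteration; this should yield the claimed constant $c=10^{-12}\tT$. After the correct centre $i$ is identified, the local step reduces to the routine Kruskal--Katona minimisation described above.
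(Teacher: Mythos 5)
Your local stage (the ``furthermore'' part) is essentially the paper's argument: strip the centre $i$ from the in-star sets, complement the out-star sets inside $[n]\sm\{i\}$, observe the resulting disjointness inside $\binom{[n]\sm\{i\}}{k-1}$, and minimise the iterated shadow via Kruskal--Katona; the colex-reversal observation identifying the extremal configuration with $\F_E$ is also the paper's. Good.

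The gap is in the coarse stage. You propose to ``iterate Theorem~\ref{thm: KK-stab} through the $n-2k$ shadow levels'' and thereby identify $\A^c$ (at level $n-k$) with a clique. This does not get off the ground: to apply Theorem~\ref{thm: KK-stab} at any given level you must know that the shadow excess \emph{at that level} is of order $c/x$, but the hypothesis only controls the \emph{cumulative} growth of the $x_i$ from level $n-k$ down to level $k$ (via $|\B_{n-k}|=|\A|$ being large and $|\B_k|\le\binom{n}{k}-|\A|$ being small), not the excess at any individual step. There is also no mechanism to pass a clique structure \emph{upward} through shadows, so ``identifying $\A^c$ with a clique'' is not a consequence you can draw; and the invocation of the ``furthermore'' of Theorem~\ref{thm: KK-stab} at each iteration does not address this, since that statement still presupposes a per-level shadow bound. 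What the paper actually does is quite different: using Lemma~\ref{binomratio}(ii) it runs a pigeonhole/averaging argument on the quantities $x_\ell$ to locate a \emph{single} level $\ell\in[k,\min(n-k-1,3n/4)]$ with $|\pl(\B_\ell)|\le(1+c_0/n)\binom{x_\ell}{\ell-1}$ — if no such $\ell$ existed, the cumulative growth of $x_k$ over roughly $(n-2k)/2$ levels would contradict $|\B_k|\le\binom{n}{k}-|\A|$. It then applies Theorem~\ref{thm: KK-stab} once at that level, determines $|S|=n-1$, and pushes the resulting approximation \emph{downward} to level $k$ via the LYM-type estimate \eqref{LYM+}; the star emerges because $\A\cap\B_k=\es$. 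The paper never asserts (and does not need) that $\A^c$ itself is close to a clique. You need to replace your iterated-stability step with this averaging-plus-single-application-plus-LYM-propagation scheme.
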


\noindent {\emph{Remark:}} The upper bound on $|{\cal A}\setminus {\cal S}|$ above follows from Theorem 1.2 of Das and Tran in \cite{DT}.\vspace{1mm}

Next we consider a theorem of Katona \cite{Ka2} on families $\A \sub \{0,1\}^n$
that are $t$-intersecting ($|A \cap B| \ge t$ for all $A,B \in \A$).
For simplicity we just consider the case that $n+t=2k$ is even,
in which case Katona's theorem gives $|\A| \le \binom{n}{\ge k}$. If $t \ge 2$ then
equality holds only for the Hamming ball $\binom{[n]}{\ge k}$.
Here we prove a tight stability result for $t$-intersecting families
with size sufficiently close to that of a Hamming ball, which determines exactly
how large such a family can be in terms of the number $E$ of sets outside the ball.
Given $k,n \in \mb{N}$, $t=2k-n\geq 2$, $E \le \tbinom{n-1}{k-1}$, 
we show that there is an extremal family $\G_E$ obtained
from $\tbinom{[n]}{\ge k}$ by adding the 
initial $E$ elements of $\tbinom{[n]}{k-1}$ in colex
and deleting the final $E'$ elements of $\tbinom{[n]}{k}$ in colex,
where $E'$ is minimum subject to
$|\pl^{t-1}(I^{(k)}_{\tbinom{n}{k}-E'})| \le \tbinom{n}{n-k+1}-E$.

\begin{thm} \label{Katona-stability}
Let $k,n \in \mb{N}$ so that $k+t$ even, $t=2k-n \geq 2$,  
and $\tT = \min \{ 10^{-6} tn^{-1} e^{t^2/n}, 1 \}$ and $\dD \in (0,1/4)$. 
If $\A \subset \{0,1\}^n$ is $t$-intersecting
and $|\A| \geq \tbinom{n}{\ge k} - \tT\dD \tbinom{n-1}{k-1}$ 
then $E := |\A \sm \tbinom{[n]}{\ge k}| 
\le 5 \tT\dD \tbinom{n-1}{k-1}$,
so $|\A \triangle \tbinom{[n]}{\ge k}| 
\le 11 \tT\dD \tbinom{n-1}{k-1}$.
Furthermore, $|\A| \le |\G_E|$.
In particular, if $E = \tbinom{u}{k-1}$ where $u \leq n-1$ 
then $|\A| \leq \tbinom{n}{\ge k} + E - \tbinom {u}{n-k}$.
\end{thm}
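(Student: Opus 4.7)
My plan is to combine a shadow-based structural argument (for the ``furthermore'' bound $|\A|\le|\G_E|$) with a quantitative Kruskal--Katona shadow-ratio estimate (for the bound on $E$). By the standard shifting/compression technique I may assume $\A$ is left-compressed: this preserves $|\A|$ and the $t$-intersecting property, and makes each layer $\A^{(s)}=\A\cap\tbinom{[n]}{s}$ an initial colex segment. Write $\F=\A\sm\tbinom{[n]}{\ge k}$ of size $E$ and $\G=\A\cap\tbinom{[n]}{\ge k}$, and focus on $\F^{(k-1)}$ and $\G^{(k)}:=\G\cap\tbinom{[n]}{k}$.

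The key observation is that for $A\in\tbinom{[n]}{k-1}$ and $B\in\tbinom{[n]}{k}$, since $|A|+|B|-n=t-1$, one has $|A\cap B|\ge t$ iff $A\cup B\ne[n]$ iff $A^c\cap B^c\ne\es$ iff $A^c\not\sub B$. So the cross-$t$-intersection of $\F^{(k-1)}$ and $\G^{(k)}$ is equivalent to $\tilde\F:=\{A^c:A\in\F^{(k-1)}\}\sub\tbinom{[n]}{n-k+1}$ being disjoint from the lower shadow $\pl^{t-1}(\G^{(k)})$, yielding $|\pl^{t-1}(\G^{(k)})|\le\tbinom{n}{n-k+1}-E$. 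By Kruskal--Katona (Lov\'asz form), this shadow is minimised over $|\G^{(k)}|$ fixed when $\G^{(k)}$ is an initial colex segment, which is the case here by compression; so $|\G^{(k)}|\le\tbinom{n}{k}-E^\ast$ with $E^\ast$ the quantity defined in the theorem. Lower layers $\F^{(s)}$ ($s<k-1$) impose analogous but stricter shadow conditions on $\G$, absorbed by the nesting of compressed families, giving $|\A|\le|\G_E|$. The ``in particular'' case $E=\tbinom{u}{k-1}$ follows since $\F^{(k-1)}=\tbinom{[u]}{k-1}$ has $\pl^{t-1}(\F^{(k-1)})=\tbinom{[u]}{n-k}$ of size $\tbinom{u}{n-k}$, forcing at least $\tbinom{u}{n-k}$ $k$-sets to be deleted.

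For the upper bound on $E$, combine $|\A|\le\tbinom{n}{\ge k}+E-E^\ast$ with the hypothesis $|\A|\ge\tbinom{n}{\ge k}-\tT\dD\tbinom{n-1}{k-1}$ to obtain $E^\ast-E\le\tT\dD\tbinom{n-1}{k-1}$. Suppose for contradiction $E>5\tT\dD\tbinom{n-1}{k-1}$, and choose $u$ with $\tbinom{u}{k-1}=E$. By Kruskal--Katona,
\[
\frac{E^\ast}{E}\ge\frac{\tbinom{u}{n-k}}{\tbinom{u}{k-1}}=\prod_{i=1}^{t-1}\frac{n-k+i}{u-k+1+i}.
\]
A direct estimate shows that for $E$ in this range $u$ is bounded away from $n-1$ at the scale $tn^{-1}e^{t^2/n}$ captured by $\tT$, giving $E^\ast/E-1\ge 1/5$ and hence $E^\ast-E>\tT\dD\tbinom{n-1}{k-1}$, a contradiction. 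Finally, $|\A\triangle\tbinom{[n]}{\ge k}|=2E+(\tbinom{n}{\ge k}-|\A|)\le 10\tT\dD\tbinom{n-1}{k-1}+\tT\dD\tbinom{n-1}{k-1}=11\tT\dD\tbinom{n-1}{k-1}$.

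The main obstacle is the quantitative shadow-ratio estimate, whose sharpness depends on the interplay between $t$ and $\sqrt{n}$: the $t/n$ factor handles small $t$ while $e^{t^2/n}$ captures the asymptotic shift between $\tbinom{u}{n-k}$ and $\tbinom{u}{k-1}$ (separated by $t-1$ in their lower indices) for larger $t$, with the constant $10^{-6}$ in $\tT$ absorbing rough constants from the underlying Stirling estimates in both regimes. A secondary difficulty is rigorously absorbing the contributions from layers $\F^{(s)}$ with $s<k-1$; an alternative is to pad $\A$ to size $\tbinom{n}{\ge k}$ and invoke Theorem~\ref{thm: ball-sized sets}, using that the $t$-intersecting property rules out any nearby Hamming ball other than $\tbinom{[n]}{\ge k}$ when $t\ge 2$.
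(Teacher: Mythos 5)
Your proposal takes a genuinely different route from the paper. You argue layer-by-layer via Kruskal--Katona shadows: the observation that cross-$t$-intersection between $\F^{(k-1)}$ and $\G^{(k)}$ is equivalent to $\{A^c : A\in\F^{(k-1)}\}$ being disjoint from $\pl^{t-1}(\G^{(k)})$ is correct and is implicitly what drives the definition of $\G_E$. The paper instead works with iterated vertex-neighbourhoods $N^i(\A)$: it uses $N^{t-1}(\A)\cap\{A^c:A\in\A\}=\es$, deduces that some $N^i(\A)$ has near-minimal vertex boundary, applies the ball-sized stability Theorem~\ref{thm: ball-sized stability} to that $N^i(\A)$, identifies the centre as $\es$ via a combinatorial argument using $t$-intersection, and derives the exact bound $|\A|\le|\G_E|$ from the submodularity Lemma~\ref{lem: local stability} applied to $N^{t-1}$. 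The paper's route handles all layers simultaneously through the iterated neighbourhood, which is exactly where your approach has gaps.

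There are three concrete problems. First, the ``lower layers'' gap is not a secondary difficulty but the crux: $E:=|\A\sm\tbinom{[n]}{\ge k}|$ counts sets of \emph{all} sizes below $k$, whereas your shadow-disjointness only yields $|\pl^{t-1}(\G^{(k)})|\le\tbinom{n}{n-k+1}-|\F^{(k-1)}|$, and it does not follow that $|\F^{(k-1)}|=E$. Sets in $\F^{(s)}$ with $s<k-1$ impose $t$-intersection constraints on $\G^{(k)}$, but these are of a different combinatorial type (they forbid a Hamming-ball-shaped family in $\tbinom{[n]}{k}$, not a colex initial segment of $\tbinom{[n]}{n-k+1}$), and the claimed ``absorption by nesting of compressed families'' is not an argument. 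The paper sidesteps this entirely via Lemma~\ref{lem: local stability} applied to $|N^{t-1}(\A)|$, which never decomposes by layer. Second, left-compression ($(i,j)$-shifting) preserves the $t$-intersecting property and each $|\A^{(s)}|$, but it makes each layer a \emph{shifted} $s$-uniform family, not an initial colex segment; these are not the same (e.g.\ $\{12,13,14\}$ is shifted but not an initial colex segment of $\tbinom{[4]}{2}$). Fortunately you do not actually need initial colex segments — KK bounds the shadow of any family — so this is a soft error, but as written the reduction is wrong. Third, the quantitative step bounding $E$ is unresolved: you need to show that $E>5\tT\dD\tbinom{n-1}{k-1}$ forces $\tbinom{u}{n-k}-\tbinom{u}{k-1}>\tT\dD\tbinom{n-1}{k-1}$ (where $\tbinom{u}{k-1}=E$), and this depends delicately on the interplay between $j:=n-1-u$, the $(t-1)$-term product ratio, and the specific form of $\tT=\min\{10^{-6}tn^{-1}e^{t^2/n},1\}$, which the paper calibrated to a completely different inequality (the two-case estimate on $\sum_i\tbinom{n}{k-i}$ in the iterated-neighbourhood claim). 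Asserting ``a direct estimate shows'' that $u$ is bounded away from $n-1$ and the ratio exceeds $6/5$ is not a proof, and it is not evident that the same $\tT$ works for your purpose without redoing the analysis from scratch. Your closing suggestion to pad $\A$ and invoke Theorem~\ref{thm: ball-sized sets} still requires an upper bound on the vertex boundary of the padded family, which is precisely what the paper's iterated-neighbourhood claim supplies — so that alternative ultimately converges back to the paper's method rather than bypassing it.
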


For our final application we consider the Erd\H{o}s Matching Conjecture
(see \cite{ematch}) that the maximum size of $\A \sub \tbinom{[n]}{k}$
with no matching of size $t+1$ is achieved by $\tbinom{[tk+k-1]}{k}$ or
${\cal S}_T = \{A \in \tbinom{[n]}{k}: |A \cap T| \ne \es\}$
for some $T \in \tbinom{[n]}{t}$.
Ellis, Keller and Lifshitz \cite{EKL-EKR} showed how stability
for this problem can be deduced from isoperimetric stability.
(We thank Noam Lifshitz for drawing this to our attention
and suggesting that we might be able to obtain 
the improved bounds given here.)
Frankl \cite{Frankl-match} showed that the ${\cal S}_T$ are 
(uniquely) extremal for $n>(2t+1)k-t$. We will use this
to obtain the following stability result.

\begin{thm} \label{match-stability}
Let $\dD \in (0,1/4)$, $c = 10^{-10} \dD$
and $r,t,k,n \in \mb{N}$ with 
$r \le k$ and $n > (2t+1)(k+r)-t$. 
If $\A \sub \tbinom{[n]}{k}$ has no matching of size $t+1$ 
and $|\A| > \tbinom{n}{k} - (1 + \tfrac{rc}{n}) \tbinom{n-t}{k}$
then there is $T \in \tbinom{[n]}{t}$ such that 
$|\A \triangle {\cal S}_T| < 3\dD \tbinom {n-t-1}{k-1}$.
\end{thm}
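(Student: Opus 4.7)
The plan is to argue by induction on $t$. The base case $t=1$ is the Erd\H{o}s--Ko--Rado stability result: using $r \le k$ and $n > 3(k+r)-1$, the hypothesis $|\A| > \tbinom{n}{k} - (1+rc/n)\tbinom{n-1}{k}$ rearranges to $|\A| \ge (1 - \theta(n-2k)/n)\tbinom{n-1}{k-1}$ with $\theta = O(c)$, so Theorem \ref{EKR-stability} applies and produces a star $\mathcal{S}_{\{v\}}$ with $|\A \triangle \mathcal{S}_{\{v\}}|$ of the required order (much smaller than $3\dD \tbinom{n-2}{k-1}$).

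For the inductive step $t \ge 2$, I would first shift $\A$ to be left-compressed (which preserves both $|\A|$ and the property of having no matching of size $t+1$) and take $v = 1$. The main task is to establish the tight degree bound $|\A^1| \ge \tbinom{n-1}{k-1} - c\tbinom{n-t-1}{k-1}$, where $\A^1 := \{A \in \A : 1 \in A\}$. Granted this bound, the link $\A_{\bar 1} := \A \cap \tbinom{[n] \setminus \{1\}}{k}$ has no matching of size $t$: any such matching $M$ has $|V(M)| = tk \le n - k - 1$ (since $n > (2t+1)(k+r) - t \ge (t+1)k$), and the $\tbinom{n-tk-1}{k-1}$ candidate $k$-sets containing $1$ and disjoint from $V(M)$ exceed the number $\le c \tbinom{n-t-1}{k-1}$ missing from $\A^1$; so some extension of $M$ lies in $\A$, producing a $(t+1)$-matching and contradicting the hypothesis.

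Now apply the inductive hypothesis to $\A_{\bar 1} \sub \tbinom{[n] \setminus \{1\}}{k}$ with parameters $(n-1, t-1, r, k, \dD)$; the conditions $n-1 > (2t-1)(k+r) - (t-1)$ and the analogous size lower bound on $|\A_{\bar 1}| = |\A| - |\A^1|$ both follow from our assumptions. This yields $T' \in \tbinom{[n]\setminus\{1\}}{t-1}$ with $|\A_{\bar 1} \triangle \mathcal{S}_{T'}| < 3\dD \tbinom{n-t-1}{k-1}$ (in fact a slightly stronger bound, if one inducts with a diminishing constant to absorb per-level overhead). Setting $T = \{1\} \cup T'$, the symmetric difference splits according to whether $1 \in A$: the ``with $1$'' part contributes $\tbinom{n-1}{k-1} - |\A^1| \le c \tbinom{n-t-1}{k-1}$ and the ``without $1$'' part contributes $|\A_{\bar 1} \triangle \mathcal{S}_{T'}|$; with $c = 10^{-10}\dD$ there is ample slack to obtain $|\A \triangle \mathcal{S}_T| < 3\dD \tbinom{n-t-1}{k-1}$.

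The main obstacle is the tight degree bound. The naive application of Frankl's theorem to $\A_{\bar 1}$ (under the weaker hypothesis that $\A_{\bar 1}$ has no matching of size $t+1$) gives only $|\A^1| \ge \tbinom{n-1}{k-1} - (1+c)\tbinom{n-t-1}{k-1}$, an error term that is an order of magnitude too large to support the matching-extension argument above. To bootstrap to the tight bound, one needs the stronger Frankl bound $|\A_{\bar 1}| \le \tbinom{n-1}{k} - \tbinom{n-t}{k}$, which in turn requires knowing that $\A_{\bar 1}$ has matching number at most $t-1$. The circularity is broken using the shifted structure: in a left-compressed family, every set $B \in \A_{\bar 1}$ admits a shift $(B \sm \{\min B\}) \cup \{1\} \in \A^1$, and one can adapt Frankl's uniqueness argument to force either $\A_{\bar 1}$ to have small matching number directly, or to exhibit enough ``dominant'' structure in $\A$ to pin the bound down.
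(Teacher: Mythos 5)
Your proposal takes a genuinely different route from the paper: the paper blows $\A$ up to $(k+r)$-sets, applies Frankl's exact bound $|\A'| \le \tbinom{n}{k+r} - \tbinom{n-t}{k+r}$ to the extension $\A' = \{A' \in \tbinom{[n]}{k+r}: A' \sups A \text{ for some } A \in \A\}$, and then runs a pigeonhole over the $r$ iterated shadow levels of the complement $\B' = \tbinom{[n]}{k+r} \sm \A'$ to find one level where the shadow is nearly tight, at which point Theorem \ref{thm: KK-stab} identifies a clique $\tbinom{S}{\ell}$ with $|S| = n-t$ and hence the covering set $T = S^c$. Your plan is an induction on $t$ via left-compression and the link at the vertex $1$. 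Unfortunately, as written there are two substantive gaps.

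First, the ``tight degree bound'' $|\A^1| \ge \tbinom{n-1}{k-1} - c\tbinom{n-t-1}{k-1}$ is never actually proven: your last paragraph acknowledges the circularity (you need $\A_{\bar 1}$ to have matching number $\le t-1$ to invoke the stronger Frankl bound, but that is what the degree bound is supposed to deliver) and resolves it only with the unsubstantiated claim that ``one can adapt Frankl's uniqueness argument.'' This is the whole difficulty, not a detail to be deferred.

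Second, and independently, even granted the degree bound, the matching-extension step fails quantitatively. To extend a $t$-matching $M$ in $\A_{\bar 1}$ to a $(t+1)$-matching you need some set containing $1$ and disjoint from $V(M)$ to lie in $\A$; there are $\tbinom{n-1-tk}{k-1}$ candidates and at most $c\tbinom{n-t-1}{k-1}$ are missing from $\A^1$. But the ratio $\tbinom{n-t-1}{k-1}/\tbinom{n-1-tk}{k-1}$ can be as large as roughly $\big(\tfrac{2t+1}{t+1}\big)^{k-1} \approx 2^{k-1}$ when $n$ is close to the lower threshold $(2t+1)(k+r)-t$ with $r$ small, which overwhelms the $c^{-1} = 10^{10}\dD^{-1}$ factor once $k \gtrsim 34$. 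So the inequality $\tbinom{n-1-tk}{k-1} > c\tbinom{n-t-1}{k-1}$ simply does not hold in the stated range of parameters, and $\A_{\bar 1}$ having no $t$-matching does not follow. (There is also a smaller issue that the constant $c = 10^{-10}\dD$ does not obviously feed into Theorem \ref{EKR-stability}, whose hypothesis requires $c_0 = 10^{-12}\tT$ with $\tT < 1/4$, so the base case needs $\dD \lesssim 10^{-2}$ rather than $\dD < 1/4$; and the ``diminishing constant'' device to absorb the per-level overhead needs more care since $t$ is unbounded.) The paper's shadow-iteration argument sidesteps all of this by working with the complement globally rather than level-by-level at a vertex.
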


The main new proof technique in our paper is a method 
for extracting stability results from compression arguments.
As far as we are aware, all known proofs of Harper's Theorem use
some form of compression, i.e.\ replacing any family by a sequence
of successively `simpler' families of the same size without
increasing the vertex boundary. One can prove Harper's Theorem by
showing that there is such a sequence that transforms any family 
into an initial segment of the simplicial order. 
As it applies to any family, it may at first seem hopeless
to obtain any structural information from this process.
However, for a suitably gradual sequence of transformations,
we are able to use the property of having small vertex boundary
to keep track of the structure of families under the reversal
of the compressions. A key tool in this analysis is 
a local stability result showing that sets with small vertex boundary 
that are reasonably close to an extremal example
must in fact be very close to an extremal example;
thus we can rule out a possible cumulative effect 
of a sequence of small adjustments from the compressions.

The organisation of this paper is as follows.
In the next section we collect various technical estimates
concerning binomial coefficients that will be used throughout the paper.
We prove stability for Kruskal--Katona in section 3
and for Harper's Theorem in section 4.
The applications are given in section 5,
and the final section contains some concluding remarks.

\medskip

\noindent \textbf{Notation.} 
We write ${\cal P}(S)$ for the power set 
(set of subsets) of a set $S$. 
Throughout we identify ${\cal P}[n]$ with $\{0,1\}^n$, 
where a set $A$ corresponds to its characteristic vector. 
We also write $\binom {S}{k} = \{A \subset S: |A|=k\}$. 
The complement of $A \sub [n]$ is $A^c:= [n] \sm A$. 
For $x \in A$ we write $A-x = A \sm \{x\}$.
For $x \in A^c$ we write $A+x = A \cup \{x\}$.
Given integers $m< n$ we write $[m,n] := \{m,m+1,\ldots ,n\}$ and let $[n]:= [1,n]$. 
We let $a\pm b$ denote some unspecified 
real number between $a-b$ and $a+b$.

\section{Estimates}

This section contains various properties of and estimates for
binomial coefficients that will be used throughout the paper.
We start by stating some simple formulae and inequalities 
for easy reference, which will henceforth be used without comment:
\begin{gather*}
\tbinom{x}{k} = \tbinom{x-1}{k} + \tbinom{x-1}{k-1}, \qquad
\tbinom{x-1}{k} \tbinom{x}{k}^{-1} = \tfrac{x-k}{x}, \qquad
\tbinom{x}{k-1} \tbinom{x}{k}^{-1} = \tfrac{k}{x-k+1}, \\
\tbinom{x-1}{k-1} \tbinom{x}{k}^{-1} = \tfrac{k}{x}, \qquad
\tbinom{x-2}{k-1} \tbinom{x}{k}^{-1} 
= \tfrac{k(x-k)}{x(x-1)} \le \tfrac{x}{4(x-1)}, \\
\tbinom{x-1}{k-1}
= \tfrac{(x-1)(x-2)}{(k-1)(x-k)} \tbinom{x-3}{k-2}, \qquad
\tfrac{k(x-k)}{x^2} \tbinom{x-1}{k-1}
= \tfrac{k(x-1)(x-2)}{(k-1)x^2} \tbinom{x-3}{k-2}
\le 2\tbinom{x-3}{k-2} \ \text{ if } \ x \ge k+1.
\end{gather*}

Next we give two lemmas concerning 
approximations of $\tbinom{x}{k}$ by $\tbinom{y}{k}$.
We omit the straightforward proof of the first of these.

\begin{lem} \label{binomratio}
For $x \ge y > k-1$ 
we have $(\tfrac{x}{y})^k 
\le \tbinom{x}{k} \tbinom{y}{k}^{-1} 
= \prod_{i=0}^{k-1} \tfrac{x-i}{y-i}
\le (\tfrac{x-k+1}{y-k+1})^k$. Therefore
\begin{enumerate}[(i)]
\item if $y>k-1$ and $x \ge (1+\tT)y$ with $\tT \ge 0$
then $\tbinom{x}{k} \ge (1+\tT)^k \tbinom{y}{k}$,
\item if $y \ge (1+\aA)k$ with $\aA>0$
and $\tbinom{x}{k} \ge (1+\tT) \tbinom{y}{k}$ 
with $\tT \in [0,1]$ then $x \geq \big ( 
{1 + \tfrac {\aA \tT }{2k(1 + \aA )}} \big ) y$.
\end{enumerate}
\end{lem}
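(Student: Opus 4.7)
The plan is to establish the main identity and sandwich inequality first, then derive (i) and (ii) from these; essentially all the substance is in (ii). First I would verify $\tbinom{x}{k}/\tbinom{y}{k} = \prod_{i=0}^{k-1}(x-i)/(y-i)$ by direct substitution from $\tbinom{x}{k} = x(x-1)\cdots(x-k+1)/k!$. For the sandwich, the key observation is that for $x \ge y > k-1$ the map $i \mapsto (x-i)/(y-i)$ is non-decreasing on $\{0,\ldots,k-1\}$: indeed $\frac{d}{di}\frac{x-i}{y-i} = \frac{x-y}{(y-i)^2} \ge 0$. Hence each factor of the product lies between its value at $i=0$ (which is $x/y$) and its value at $i=k-1$ (which is $(x-k+1)/(y-k+1)$), and taking the product yields the stated two-sided inequality.

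Part (i) is then immediate from the lower bound: $x \ge (1+\theta)y$ gives $x/y \ge 1+\theta$, so $\tbinom{x}{k}/\tbinom{y}{k} \ge (x/y)^k \ge (1+\theta)^k$.

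For part (ii), I would apply the upper bound in reverse: combining $\tbinom{x}{k}/\tbinom{y}{k} \ge 1+\theta$ with $\tbinom{x}{k}/\tbinom{y}{k} \le ((x-k+1)/(y-k+1))^k$ gives
\[ x - y \ge \bigl((1+\theta)^{1/k}-1\bigr)(y-k+1). \]
I would then lower bound the two factors on the right separately. For the exponential factor I would show $(1+\theta)^{1/k}-1 \ge \theta/(2k)$ on $\theta \in [0,1]$; a clean route is $(1+\theta/(2k))^k \le e^{\theta/2} \le 1+\theta$ (the first step from $(1+s)^k \le e^{ks}$, the second since $1+\theta - e^{\theta/2}$ vanishes at $\theta=0$ and has positive derivative throughout $[0,1]$). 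For the linear factor I would check that $y \ge (1+\alpha)k$ implies $y-k+1 \ge \alpha y/(1+\alpha)$, which rearranges to $y \ge (1+\alpha)(k-1)$ and is weaker than the hypothesis. Multiplying the two bounds produces $x-y \ge \frac{\alpha\theta}{2k(1+\alpha)}y$, as claimed.

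The only mild obstacle is keeping the two elementary loss factors in (ii) tight enough: the factor $1/(2k)$ from the exponential inequality and the factor $\alpha/(1+\alpha)$ from replacing $y-k+1$ by a multiple of $y$ must combine to produce precisely the stated constant, so one has to pick these specific forms rather than coarser alternatives such as $(1+\theta)^{1/k} \ge 1+\theta/(k(k+1))$ or $y - k + 1 \ge y/2$, which would weaken the $\alpha$-dependence.
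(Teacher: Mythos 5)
Your proof is correct. The paper omits its own proof of this lemma (calling it ``straightforward''), so there is no argument to compare against; your route --- sandwiching each factor $\tfrac{x-i}{y-i}$ by monotonicity in $i$, then in (ii) inverting the upper bound and cleanly lower-bounding $(1+\tT)^{1/k}-1 \ge \tT/2k$ via $(1+\tT/2k)^k \le e^{\tT/2} \le 1+\tT$ on $[0,1]$ and $y-k+1 \ge \tfrac{\aA}{1+\aA}y$ --- is the natural one and reproduces the stated constants exactly.
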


\begin{lem} \label{shiftapprox}
Suppose $k \ge 2$, $x \ge y > k-1$, $0<c<1/2$ 
and $\tbinom{x}{k} = (1 \pm c) \tbinom{y}{k}$.
Then $\tbinom{x-1}{k-1} = (1 \pm c) \tbinom{y-1}{k-1}$,
$\tbinom{x-1}{k} =  
(1 \pm \tfrac{y+k}{y-k} c ) \tbinom{y-1}{k}$ (if $y>k$) 
and $\tbinom{x+1}{k} = (1 \pm c) \tbinom{y+1}{k}$.
\end{lem}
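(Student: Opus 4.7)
The plan is to reduce all three claims to the product representation $\tbinom{x}{k}/\tbinom{y}{k} = \prod_{i=0}^{k-1}(x-i)/(y-i)$ from Lemma \ref{binomratio}, combined with Pascal's identity $\tbinom{n}{k} = \tbinom{n-1}{k} + \tbinom{n-1}{k-1}$. The key structural observation is that since $x \ge y > k-1$, each factor $(x-i)/(y-i)$ is at least $1$, and the sequence is nondecreasing in $i$ (immediate from $\partial_i[(x-i)/(y-i)] = (x-y)/(y-i)^2 \ge 0$). Thus among the factors $(x-i)/(y-i)$ for $i \in \{-1,0,\ldots,k\}$, the smallest is $x/y$ (at $i=0$) and the largest is $(x-k)/(y-k)$ (at $i=k$).

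For part (i) I would rewrite $\tbinom{x-1}{k-1}/\tbinom{y-1}{k-1} = \prod_{i=1}^{k-1}(x-i)/(y-i)$, which is obtained from $\tbinom{x}{k}/\tbinom{y}{k}$ by deleting the smallest factor $x/y \ge 1$; the resulting product therefore lies in $[1,\,\tbinom{x}{k}/\tbinom{y}{k}] \subseteq [1,\,1+c]$. Part (iii) is handled in the same spirit: the reindexing $\tbinom{x+1}{k}/\tbinom{y+1}{k} = \prod_{i=-1}^{k-2}(x-i)/(y-i)$ is obtained from $\tbinom{x}{k}/\tbinom{y}{k}$ by swapping its largest factor $(x-k+1)/(y-k+1)$ for the smaller $(x+1)/(y+1)$, so the ratio again lies in $[1,\,1+c]$.

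Part (ii) is the subtle one, because the analogous reindexing $\tbinom{x-1}{k}/\tbinom{y-1}{k} = \prod_{i=1}^{k}(x-i)/(y-i)$ replaces the smallest factor by the largest and can therefore exceed $\tbinom{x}{k}/\tbinom{y}{k}$. Instead, I would use Pascal to write $\tbinom{x-1}{k} - \tbinom{y-1}{k} = (\tbinom{x}{k} - \tbinom{y}{k}) - (\tbinom{x-1}{k-1} - \tbinom{y-1}{k-1})$, then invoke the hypothesis together with the already-established part (i) to locate this difference in the interval $[-c\tbinom{y-1}{k-1},\, c\tbinom{y}{k}]$. Dividing by $\tbinom{y-1}{k}$ and applying the standard identities $\tbinom{y}{k}/\tbinom{y-1}{k}=y/(y-k)$ and $\tbinom{y-1}{k-1}/\tbinom{y-1}{k}=k/(y-k)$ shows that $\tbinom{x-1}{k}/\tbinom{y-1}{k}$ lies in $[1-ck/(y-k),\,1+cy/(y-k)]$, which is contained in the claimed range $1 \pm c(y+k)/(y-k)$.

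The main obstacle is precisely the direction issue in part (ii): a naive product comparison goes the wrong way, so one must prove (i) first and feed it into the Pascal decomposition, exploiting the partial cancellation between the two differences $\tbinom{x}{k} - \tbinom{y}{k}$ and $\tbinom{x-1}{k-1} - \tbinom{y-1}{k-1}$. Once this ordering of steps is fixed, no delicate estimate on $x - y$ itself is needed.
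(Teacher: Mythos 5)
Your proof is correct and follows essentially the same route as the paper: establish part (i) first via the product representation (the paper writes $\tbinom{x-1}{k-1}/\tbinom{y-1}{k-1} = (y/x)\tbinom{x}{k}/\tbinom{y}{k}$ where you instead argue via monotonicity of the factors $(x-i)/(y-i)$, which amounts to the same thing), then deduce (ii) from Pascal's identity plus (i), and handle (iii) by another product manipulation showing the ratio stays in $[1,1+c]$. The ordering of steps and the Pascal trick for (ii) are exactly the paper's; your slightly sharper interval $[1-ck/(y-k),\,1+cy/(y-k)]$ in (ii) is a minor refinement of the same computation.
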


\begin{proof}
Note that $\tbinom{x-1}{k-1} \tbinom{y-1}{k-1}^{-1}
= \prod_{i=1}^{k-1} \tfrac{x-i}{y-i} \ge 1$ as $x \ge y$,
and $\tbinom{x-1}{k-1} \tbinom{y-1}{k-1}^{-1}
= \tbinom{x}{k} \tbinom{y}{k}^{-1}
\tfrac{y}{x} \le 1+c$. We deduce
$\tbinom{x-1}{k} =  \tbinom{x}{k} - \tbinom{x-1}{k-1}
= (1 \pm c) \tbinom{y}{k} - (1 \pm c) \tbinom{y-1}{k-1}
= \tbinom{y-1}{k} \pm c \tbinom{y}{k} \pm c \tbinom{y-1}{k-1}
= (1 \pm \tfrac{y+k}{y-k} c ) \tbinom{y-1}{k}$. Similarly,
$\tbinom{x+1}{k} \ge \tbinom{y+1}{k}$ and
$\tbinom{x+1}{k} \tbinom{y+1}{k}^{-1}
= \tbinom{x}{k} \tbinom{y}{k}^{-1}
\tfrac{(x+1)(y+1-k)}{(y+1)(x+1-k)} \le 1+c$
as $(x+1)(y+1-k) \le (y+1)(x+1-k)$.
\end{proof}

The remainder of this section is mostly concerned
with properties of the following functions.
For $k\in {\mathbb N}$ we define 
$f_k:[0,\infty ) \to [1,\infty)$ and
$g_k:(k-1,\infty) \to (0,\infty)$ by
\[  f_k(\tbinom {x}{k}) = \tbinom{x}{k-1}
\ \text{ for } \ x \ge k-1
\quad \text{ and } \quad
g_k(x) = \sum_{i=0}^{k-1} (x-i)^{-1}.\]
Note that $f_1(t)=1$ for all $t \ge 0$
and $\tbinom {x}{k} g_k(x)$ is the derivative
of $\tbinom {x}{k}$ with respect to $x$.
As $\tbinom{x}{k} g_k(x) \ge 
\tbinom{x}{k} \tfrac{k}{x} = \tbinom{x-1}{k-1}$,
by the Mean Value Theorem we have
\begin{equation}\label{mvt}
\tbinom{x+c}{k} \ge \tbinom{x}{k} + c \tbinom{x-1}{k-1}
\qquad \text{ for all } c \ge 0.
\end{equation}

The most important feature of $f_k$ for our purposes
is that it is concave, and that we have an effective
estimate for its second derivative, as follows.

\begin{lem} \label{deriv}
If $k \ge 2$, $x > k-1$ and $t = \tbinom {x}{k}$ then
\[ f_k'(t) = \frac{k g_{k-1}(x)}{(x-k+1)g_k(x)}, \qquad
f_k''(t) = \frac{k ( g_{k-1}'(x) - g_{k-1}(x)^2 )}{
t (x-k+1)^2 g_{k}(x)^3}, \]
and if $x \ge k-1+\aA$ with $\aA>0$ then
$-f_k''(t) > ((2+\aA^{-1})^2 (x-k+1)t)^{-1}$.
\end{lem}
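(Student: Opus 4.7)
The two derivative formulas come from implicit differentiation of the identity $f_k(\binom{x}{k}) = \binom{x}{k-1}$; the inequality then follows from an elementary argument combining Cauchy--Schwarz with a convenient reparametrisation.

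For the first derivative, I would use that $\binom{x}{k} = \prod_{i=0}^{k-1}(x-i)/k!$ has logarithmic derivative $g_k(x)$, so that $\tfrac{d}{dx}\binom{x}{k} = \binom{x}{k} g_k(x)$ and similarly $\tfrac{d}{dx}\binom{x}{k-1} = \binom{x}{k-1} g_{k-1}(x)$. Differentiating $f_k(\binom{x}{k}) = \binom{x}{k-1}$ with respect to $x$ and rearranging yields $f_k'(t) = (\binom{x}{k-1} g_{k-1})/(\binom{x}{k} g_k)$, and simplifying $\binom{x}{k-1}/\binom{x}{k} = k/(x-k+1)$ gives the stated formula. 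For $f_k''$, I would differentiate $f_k'(\binom{x}{k}) = kg_{k-1}/((x-k+1)g_k)$ once more with respect to $x$ and divide by $\tfrac{d}{dx}\binom{x}{k} = \binom{x}{k} g_k = tg_k$. After applying the identities $g_k - g_{k-1} = 1/(x-k+1)$ and $g_k' - g_{k-1}' = -1/(x-k+1)^2$, the $g_{k-1} g_{k-1}'$ cross-terms cancel and the numerator collapses to $g_{k-1}' - g_{k-1}^2$ with the claimed prefactor.

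For the inequality, the sign $-f_k''(t) > 0$ is immediate from $-g_{k-1}'(x) = \sum_{i=0}^{k-2}(x-i)^{-2} > 0$. For the quantitative bound, set $\bB = x-k+1 \ge \aA$; the inequality is equivalent to
\[ \bB g_k(x)^3 < (2+\aA^{-1})^2 \, k \, \big(g_{k-1}(x)^2 + (-g_{k-1}'(x))\big). \]
Introducing the rescaled quantities $v = \bB g_{k-1}(x)$ and $w = \bB^2(-g_{k-1}'(x))$, and observing that $g_k = g_{k-1} + 1/\bB$ gives $\bB g_k = 1+v$, the inequality becomes
\[ (1+v)^3 < (2+\aA^{-1})^2 \, k \, (v^2 + w). \]
Cauchy--Schwarz applied to $g_{k-1}(x) = \sum_{i=0}^{k-2}(x-i)^{-1}$ yields $v^2 \le (k-1)w$, hence $v^2 + w \ge kv^2/(k-1)$. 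Combined with the lower bound $v \ge \bB/(\bB+1) \ge \aA/(\aA+1)$ (from the largest summand of $g_{k-1}$), this reduces the claim to a short elementary inequality in $v$, easily checked using that the function $(1+v)^3/v^2$ is minimised at $v=2$. The main obstacle I anticipate is obtaining the sharp constant $(2+\aA^{-1})^2$, which is $k$-independent: cruder bounds such as $g_k \le k/\bB$ lose a factor depending on $k$, so the rescaling to $v$ is essential to exploit the cancellation between $g_k$ and $g_{k-1}$ and to make the tight dependence on $\aA$ visible.
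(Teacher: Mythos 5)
Your derivations of $f_k'$ and $f_k''$ match the paper's: both differentiate $f_k(\tbinom{x}{k}) = \tbinom{x}{k-1}$ implicitly and then use $g_k - g_{k-1} = (x-k+1)^{-1}$ and $g_k' - g_{k-1}' = -(x-k+1)^{-2}$ to collapse the numerator. That part is fine.

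For the inequality your route is genuinely different from the paper's, but as sketched it has a gap. After the rescaling $v = \bB g_{k-1}$, $w = \bB^2(-g_{k-1}')$, the target becomes $(1+v)^3 < (2+\aA^{-1})^2\,k\,(v^2+w)$. Your lower bound $v \ge \bB/(\bB+1)\ge\aA/(\aA+1)$ gives $(1+v)/v \le 2+\aA^{-1}$, i.e.\ $(2+\aA^{-1})^2 \ge (1+v)^2/v^2$, which is the exact analogue of the paper's bound $g_{k-1} \ge g_k/(2+\aA^{-1})$. But the reduction you then announce — ``a short elementary inequality in $v$, easily checked using that $(1+v)^3/v^2$ is minimised at $v=2$'' — cannot stand as stated: the reduced inequality still involves $k$ (and $\aA$), and $(1+v)^3/v^2$ is unbounded as $v\to\infty$, so no lower bound on $v$ alone can control it. What is missing is the matching \emph{upper} bound
\[ v = \bB\, g_{k-1}(x) = \sum_{i=0}^{k-2}\frac{x-k+1}{x-i} < k-1, \]
since each summand is $<1$. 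This is precisely the analogue, one index down, of the inequality $(x-k+1)g_k(x)\le k$ that the paper invokes. Once you have $v<k-1$, the argument closes cleanly: $v^2+w\ge v^2$ (you do not even need Cauchy--Schwarz, dropping $w\ge 0$ already suffices, which mirrors the paper's dropping of $-g_{k-1}'>0$), so the right side is $\ge (2+\aA^{-1})^2 k v^2 \ge (1+v)^2 k > (1+v)^3$, the last step strict because $1+v<k$. So: supply the bound $v<k-1$ explicitly, and either drop the Cauchy--Schwarz step or be aware it is not load-bearing; the appeal to the $v=2$ minimum is not the right tool here since you are estimating the function from above, not below, and it never enters the completed argument. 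For comparison, the paper's proof is shorter: it keeps $g_{k-1}^2 - g_{k-1}' > g_{k-1}^2$, substitutes $g_{k-1}\ge g_k/(2+\aA^{-1})$, and finishes with $(x-k+1)g_k \le k$ — no rescaling or Cauchy--Schwarz needed.
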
 

\begin{proof}
Differentiating the identity $f_k(t) = \tbinom{x}{k-1}$ 
with respect to $x$ gives 
$f_k'(t) t g_k(x) = \tbinom {x}{k-1} g_{k-1}(x)$,
and hence the stated formula for $f_k'(t)$.
Substituting $g_{k-1}(x) = g_k(x) - (x-k+1)^{-1}$ 
and differentiating again gives 
\[ f_k''(t) t g_k(x) =  \frac {k \big ( 2 g_{k}(x) 
+ (x-k+1) ( g_{k}'(x) - g_{k}(x)^2 ) \big ) }{ (x-k+1)^3 g_k(x)^2}. \]
To deduce the stated formula for $f_k''(t)$ we need to show 
\[ 2 g_{k}(x) + (x-k+1) ( g_{k}'(x) - g_{k}(x)^2 ) 
= (x-k+1) ( g_{k-1}'(x) - g_{k-1}(x)^2 ). \]
Using $g_{k-1}'(x) = g_{k}'(x) + (x-k+1)^{-2}$
and $g_{k}(x)^2 - g_{k-1}(x)^2 = (g_{k}(x)-g_{k-1}(x))
(g_{k}(x)+g_{k-1}(x)) = (x-k+1)^{-1}(g_{k}(x)+g_{k-1}(x))$
reduces this to the identity 
$g_{k}(x) = g_{k-1}(x) + (x-k+1)^{-1}$,
so the formula is valid. To see the final statement,
we first note that
$g_{k-1}'(x) = - \sum _{i=0}^{k-2} (x-i)^{-2} < 0$	 
since $k\geq 2$
and $(x-k+1)^{-1} \leq (1 + \aA^{-1})(x-k+2)^{-1}$,
so $g_{k}(x) \leq (2 + \aA^{-1})g_{k-1}(x)$. Thus
\[- f_{k}''(t) > k ((2+\alpha ^{-1})^2(x-k+1)^2g_k(x) t)^{-1},\]
which with $(x-k+1)g_{k}(x) \leq k$ gives the required bound.
\end{proof}

Next we record a simple consequence 
of the concavity shown in the previous lemma.

\begin{lem} \label{concavecor}
Suppose $x \ge \ell \ge 2$ and
$0 \le z \le \tbinom{x-1}{\ell-1}$. Then 
$q(z) := f_\ell(\tbinom{x}{\ell}-z) + f_{\ell-1}(z)
\ge \tbinom{x}{\ell-1}$.
\end{lem}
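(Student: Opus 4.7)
The plan is to show that $q$ is a concave function of $z$ and then reduce the inequality to checking the two endpoints, where the bound will be essentially exact.

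First I would establish concavity on $[0, \tbinom{x-1}{\ell-1}]$. By Lemma \ref{deriv}, for each $k \ge 2$ the function $f_k$ satisfies $f_k'' < 0$ on the interior of its domain, so $f_k$ is concave on $[0, \infty)$; and $f_1 \equiv 1$ is trivially concave. Hence both $f_\ell$ and $f_{\ell-1}$ are concave. Since $z \mapsto \tbinom{x}{\ell} - z$ is affine, the composition $z \mapsto f_\ell(\tbinom{x}{\ell}-z)$ is concave in $z$, and adding the concave $f_{\ell-1}(z)$ preserves concavity.

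Because a continuous concave function on a closed interval attains its minimum at an endpoint, it then suffices to check $q(0)$ and $q(\tbinom{x-1}{\ell-1})$. At $z = 0$ I would get $q(0) = f_\ell(\tbinom{x}{\ell}) + f_{\ell-1}(0) = \tbinom{x}{\ell-1} + 1$, using that $f_{\ell-1}(0) = 1$ (which is the definitional value of $f_1$ when $\ell = 2$, and for $\ell \ge 3$ follows from $\tbinom{\ell-2}{\ell-1} = 0$ with $\tbinom{\ell-2}{\ell-2} = 1$). At the right endpoint, Pascal's identity gives $\tbinom{x}{\ell} - \tbinom{x-1}{\ell-1} = \tbinom{x-1}{\ell}$, so $f_\ell$ evaluates to $\tbinom{x-1}{\ell-1}$; combined with $f_{\ell-1}(\tbinom{x-1}{\ell-1}) = \tbinom{x-1}{\ell-2}$, one more use of Pascal yields $q(\tbinom{x-1}{\ell-1}) = \tbinom{x-1}{\ell-1} + \tbinom{x-1}{\ell-2} = \tbinom{x}{\ell-1}$. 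Both endpoint values are at least $\tbinom{x}{\ell-1}$, completing the proof.

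The argument has no real obstacle; the lemma is essentially an immediate consequence of the concavity already recorded in Lemma \ref{deriv}. The only small things to check are the boundary value $f_{\ell-1}(0) = 1$ and the edge case $\ell = 2$ in which $f_{\ell-1} = f_1$ is constant (so concavity of $q$ comes entirely from the $f_\ell$ term). The equality $q(\tbinom{x-1}{\ell-1}) = \tbinom{x}{\ell-1}$ at the right endpoint shows the bound is sharp, which is presumably what makes this lemma the right tool in the subsequent Kruskal--Katona stability arguments.
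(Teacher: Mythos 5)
Your proof is correct and follows essentially the same route as the paper: establish concavity of $q$ from Lemma \ref{deriv}, then evaluate $q(0) = \tbinom{x}{\ell-1}+1$ and $q(\tbinom{x-1}{\ell-1}) = \tbinom{x}{\ell-1}$ and conclude by concavity. You have merely spelled out a few boundary details (the value $f_{\ell-1}(0)=1$ and the degenerate case $\ell=2$) that the paper treats as immediate.
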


\begin{proof}
Note that $q$ is concave by Lemma \ref{deriv}
with $q(\tbinom{x-1}{\ell-1}) = 
\tbinom{x-1}{\ell-1} + \tbinom{x-1}{\ell-2}
= \tbinom{x}{\ell-1}$
and $q(0) = \tbinom{x}{\ell-1} + 1$.
The lemma follows.
\end{proof}

In the following two lemmas we show how an estimate
for the second derivative of a concave function $f$
translates into an effective estimate for
certain differences of the form
$(f(y)+f(z)) - (f(a)+f(b))$ where
$a \le y \le z \le b$ with $y+z=a+b$.

\begin{lem} \label{taylor}
Suppose $g:[a,b] \to \mb{R}$ is concave and non-negative 
and $-g''(t) \ge m$ for $t \in [a,c]$ with $c=a+w \le (a+b)/2$.
Then $g(a+d) \ge dwm/4$ for $d \in [0,c-a]$. 
\end{lem}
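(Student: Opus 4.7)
The plan is to split the argument into two pieces: first establish a lower bound of the shape $g(c) \ge mw^2/4$ using the curvature hypothesis, and then interpolate to $a+d$ using concavity together with the non-negativity of $g$.

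The interpolation step is short. Concavity on $[a,c]$ gives the chord inequality $g(a+d) \ge (1-d/w)g(a) + (d/w)g(c)$, and since $g(a) \ge 0$ this is at least $(d/w)g(c)$, which yields the desired $g(a+d) \ge dwm/4$ once $g(c) \ge mw^2/4$ is in hand.

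To establish this endpoint bound, I would expand $g(a)$ about $c$ using Taylor's theorem with integral remainder, so that $g(a) = g(c) - g'(c)w + \int_a^c (s-a)g''(s)\, ds$. Because $g''(s) \le -m$ on $[a,c]$ and $(s-a) \ge 0$, the integral is at most $-mw^2/2$, giving $g(c) \ge g(a) + g'(c)w + mw^2/2 \ge g'(c)w + mw^2/2$. The remaining task is to prevent $g'(c)$ from being too negative, and this is exactly where the hypothesis $c \le (a+b)/2$ enters: the tangent-line inequality at $c$ applied at $t=b$ combined with $g(b) \ge 0$ gives $g'(c) \ge -g(c)/(b-c)$, and $b - c = b-a-w \ge w$ then yields $g(c)\bigl(1 + w/(b-c)\bigr) \ge mw^2/2$ with $1 + w/(b-c) \le 2$, hence $g(c) \ge mw^2/4$.

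The main, and really only, obstacle is ensuring that a potentially large negative value of $g'(c)$ does not swallow the quadratic gain $mw^2/2$. The hypothesis that $c$ lies in the left half of $[a,b]$, together with $g \ge 0$ on the whole interval, handles this with at most a factor-of-$2$ loss, which is precisely what gets absorbed into the constant $4$ in the conclusion.
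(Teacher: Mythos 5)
Your proof is correct and takes essentially the same approach as the paper: Taylor-expand $g$ about $c$ toward $a$ to extract the quadratic gain $mw^2/2$, use $g(b)\ge 0$ together with concavity to control $g'(c)$, deduce $g(c)\ge mw^2/4$, and then interpolate to $a+d$ by concavity and $g(a)\ge 0$. The only cosmetic difference is that you bound $g'(c)$ explicitly via the tangent-line inequality at $c$, whereas the paper eliminates $g'(c)$ by taking the linear combination $(b-c)\cdot(\text{Taylor at }a)+w\cdot(\text{Taylor at }b)$; these are the same estimate.
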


\begin{proof}
By Taylor's theorem, we have $a \le t_1 \le c \le t_2 \le b$ with
\begin{align*}
0 & \leq g(a) = g(c) - w g'(c) + \tfrac{1}{2} w^2 g''(t_1),\ \text{ and } \\
0 & \leq g(b) = g(c) + (b-c) g'(c) + \tfrac{1}{2} (b-c)^2 g''(t_2),\ \text{ so } \\
0 & \leq (b-c) \big( g(c) - w g'(c) + \tfrac{1}{2} w^2 g''(t_1) \big) \\
& \ \ + w \big( g(c) + (b-c) g'(c) + \tfrac{1}{2} (b-c)^2 g''(t_2) \big) \\
& \le (b-a) g(c) +  \tfrac{b-a}{4} w^2 g''(t_1) \le (b-a)( g(c) - w^2m/4 ). 
\end{align*}  
By concavity, $g(a+d) \ge \tfrac{d}{w} g(c) \ge dwm/4$, as required.
\end{proof}

\begin{lem} \label{defect}
Let $f:[a,b] \to \mb{R}$ be concave
with $-f''(t) \ge m$ for $t \in [a,a+w]$ 
with $w \le (b-a)/2$.
Suppose $a \le y \le z \le b$ with $y+z=a+b$
and $f(y)+f(z)<f(a)+f(b)+\Phi$.
Then $y-a \le 4\Phi/mw$.
\end{lem}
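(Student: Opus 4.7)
I would set $d := y - a$, so that $z = b - d$ (using $y+z=a+b$) and $d \in [0, (b-a)/2]$. Introducing the concavity defect
\[ \psi(d) \;=\; f(a+d) + f(b-d) - f(a) - f(b), \]
the hypothesis reads $\psi(d) < \Phi$, and the target $d \le 4\Phi/(mw)$ reduces to establishing the lower bound $\psi(d) \ge dwm/4$ for $d \in [0,w]$; combining with $\psi(d) < \Phi$ then gives $d < 4\Phi/(mw)$ directly.

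I would first record the key properties of $\psi$ needed to invoke Lemma \ref{taylor}: (i) $\psi(0)=0$; (ii) $\psi$ is concave on its domain as a sum of two concave functions of $d$; (iii) $\psi \ge 0$ throughout, since the ``above-chord'' property of concave $f$ evaluated at $y,z$ yields $f(a+d)+f(b-d)\ge f(a)+f(b)$; and (iv) $-\psi''(t) = -f''(a+t) - f''(b-t) \ge m + 0 = m$ for $t \in [0,w]$, using $-f''(a+t) \ge m$ on $[0,w]$ together with $-f''(b-t) \ge 0$ from concavity of $f$.

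The main technical obstacle is that Lemma \ref{taylor} requires the strong-concavity width to be at most half the domain length, i.e.\ $w \le (b-a)/4$, while our hypothesis only gives $w \le (b-a)/2$. To bridge this gap I would work with the auxiliary function
\[ \tilde\psi(d) \;:=\; f(a+d) + f(a+2w-d) - f(a) - f(a+2w) \]
defined on $[0, 2w]$ (which is a subinterval of $[a,b]$ since $w \le (b-a)/2$). This $\tilde\psi$ shares all four properties listed above (with $w$ now exactly half of the new domain length $2w$), so Lemma \ref{taylor} applies without further constraint and delivers $\tilde\psi(d) \ge dwm/4$ for $d \in [0,w]$.

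Finally, a short calculation comparing $\psi$ and $\tilde\psi$ transfers the bound: substituting $u = s+d$ in one of the integrals gives
\[
\psi(d) - \tilde\psi(d) \;=\; \bigl[f(b-d)-f(a+2w-d)\bigr] - \bigl[f(b) - f(a+2w)\bigr] \;=\; \int_{a+2w-d}^{b-d} \bigl( f'(s) - f'(s+d) \bigr) \, ds,
\]
and this is non-negative since $f'$ is non-increasing by concavity. Hence $\psi(d) \ge \tilde\psi(d) \ge dwm/4$ for $d \in [0,w]$, which together with the hypothesis $\psi(d) < \Phi$ yields the desired bound $d \le 4\Phi/(mw)$.
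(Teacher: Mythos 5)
Your argument is correct, but it takes a noticeably more circuitous route than the paper's proof. The paper subtracts from $f$ the linear chord $h$ with $h(a)=f(a)$, $h(b)=f(b)$, setting $g:=f-h$ on the \emph{original} domain $[a,b]$; then $g$ is concave and non-negative, $g(a)=g(b)=0$, $g''=f''$, and $g(y)\le g(y)+g(z)=f(y)+f(z)-f(a)-f(b)<\Phi$ because $h$ is affine and $y+z=a+b$. Lemma~\ref{taylor} then applies directly to $g$ on $[a,b]$ --- its condition $a+w\le(a+b)/2$ is precisely the hypothesis $w\le(b-a)/2$ --- giving $g(y)\ge (y-a)wm/4$ and hence the conclusion, in two lines. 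The ``obstacle'' you identify (that Lemma~\ref{taylor} would seem to demand $w\le(b-a)/4$) is not intrinsic to Lemma~\ref{taylor}; it is created by your choice to encode the defect as the one-variable function $\psi(d)$ on the \emph{halved} domain $[0,(b-a)/2]$, whose midpoint sits at $(b-a)/4$. Your repair --- comparing $\psi$ with an auxiliary $\tilde\psi$ built on the interval $[a,a+2w]$, invoking Lemma~\ref{taylor} for $\tilde\psi$, and transferring the bound back to $\psi$ via the non-increase of $f'$ --- is correct and a pleasant use of concavity, but it is extra machinery that the chord-subtraction device renders unnecessary. One shared caveat, not specific to your write-up: both your proof and the paper's establish the lower bound $\psi(d)\ge dwm/4$ (resp.\ $g(a+d)\ge dwm/4$) only for $d\in[0,w]$, so the final deduction $y-a\le 4\Phi/mw$ tacitly assumes $y-a\le w$; in all applications $\Phi$ is small enough that $4\Phi/mw<w$, so this is harmless, but it is an implicit hypothesis worth noticing.
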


\begin{proof}
Define $g(t)=f(t)-h(t)$ where $h$ is the linear function
with $h(a)=f(a)$ and $h(b)=f(b)$.
Then $g$ is concave and non-negative,
$g(a)=g(b)=0$, $g''(t)=f''(t)$ and $g(y) \le g(y)+g(z) < \Phi$.
Also $g(y) \ge (y-a)wm/4$ by Lemma \ref{taylor},
so $y-a \le 4\Phi/mw$.
\end{proof}

Now we state some specific instances of
Lemma \ref{defect} (using Lemma \ref{deriv})
that will be used later in the paper.

\begin{lem} \label{defectapps} Let $n \ge x \ge \ell \ge 2$.
\begin{enumerate}[(i)]
\item Suppose $y, z \in {\mathbb N}$ satisfy $0 \le y \le z \le \tbinom{n}{\ell}$, 
with $\tbinom{n}{\ell} \ge y+z = X 
\ge \tbinom{n}{\ell} - \tfrac{1}{4} \tbinom{x}{\ell}$ and 
$f_\ell(y) + f_\ell(z) < 1 + f_\ell(X) + \tfrac{c}{x} \tbinom{x}{\ell-1}$.
Then $y \le 400 c \tbinom{x-1}{\ell-1}$.
\item Suppose $y,z \in {\mathbb N}$ with $0 \le y \le z \le \tbinom{n}{\ell}$,
with $y+z = \tbinom{n}{\ell} + E$, where
$0 < E < \tfrac{1}{4} \tbinom{x}{\ell}$,
and $f_\ell(y) + f_\ell(z) < f_\ell(E) + \tbinom{n}{\ell-1}
+ \tfrac{c}{x} \tbinom{x}{\ell-1}$.
Then $y \le E + 400 c \tbinom{x-1}{\ell-1}$.
\item Suppose $(1+\tT)\tbinom{x}{\ell} \le \tbinom{n}{\ell}$,
$\tbinom{x}{\ell} \le y \le z \le \tbinom{n}{\ell}$
with $y+z = \tbinom{x}{\ell} + \tbinom{n}{\ell}$
and $f_\ell(y) + f_\ell(z) <  \tbinom{x}{\ell-1}
+ \tbinom{n}{\ell-1} + \tfrac{c}{x} \tbinom{x}{\ell}$.
Then $y \le \tbinom{x}{\ell} + 72 c\tT^{-1} \tbinom{x-1}{\ell-1}$.
Furthermore, if $\tbinom{x}{\ell} 
< \tbinom{n-1}{\ell} + \tfrac{1}{2} \tbinom{n-1}{\ell-1}$
and $f_\ell(y) + f_\ell(z) <  \tbinom{x}{\ell-1}
+ \tbinom{n}{\ell-1} + \tfrac{c' \ell(x-\ell)}{x^3} \tbinom{x}{\ell-1}$
then $y \le \tbinom{x}{\ell} + 250c' \tbinom{x-3}{\ell-2}$.
\end{enumerate}
\end{lem}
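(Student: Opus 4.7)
The plan is to deduce all three parts from Lemma \ref{defect} applied to the concave function $f_\ell$, using the curvature estimate $-f_\ell''(\tbinom{u}{\ell}) \ge (9(u-\ell+1)\tbinom{u}{\ell})^{-1}$ from Lemma \ref{deriv} (valid for $u \ge \ell$, since then $(2+(u-\ell+1)^{-1})^2 \le 9$). In each case I pick the interval $[a,b]$ so that the constant $f_\ell(a)+f_\ell(b)$ absorbs the constant term on the right-hand side of the hypothesis, reading off the corresponding $\Phi$.

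For part (i), take $[a,b]=[0,X]$: since $f_\ell(0) = \tbinom{\ell-1}{\ell-1} = 1$, this gives $\Phi = c\tbinom{x}{\ell-1}/x$. Split on whether $\tbinom{n}{\ell}\ge \tfrac94\tbinom{x}{\ell}$: in the first case take $w=\tbinom{x}{\ell}$ (which is at most $X/2$ by the hypothesis $X \ge \tbinom{n}{\ell}-\tbinom{x}{\ell}/4$), so $u_0 = x$; otherwise take $w = X/2 < \tfrac98\tbinom{x}{\ell}$, and Lemma \ref{binomratio} yields $u_0 \le (9/8)^{1/\ell}\,x \le 1.1\,x$. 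Either way $mw = \Omega(1/(x-\ell+1))$, so Lemma \ref{defect} gives $y \le 4\Phi/(mw) = O(c(x-\ell+1)\tbinom{x}{\ell-1}/x) = O(c\tbinom{x-1}{\ell-1})$, within the asserted constant $400$. Part (ii) uses the same argument with $[a,b] = [E, \tbinom{n}{\ell}]$: the bound $y \ge E$ is automatic from $z \le \tbinom{n}{\ell}$, and $f_\ell(a)+f_\ell(b) = f_\ell(E)+\tbinom{n}{\ell-1}$ matches the hypothesis.

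For the first bound in part (iii), take $[a,b] = [\tbinom{x}{\ell}, \tbinom{n}{\ell}]$, so $\Phi = c\tbinom{x}{\ell}/x$, and on $[a, a+w]$ we have $t = \tbinom{u}{\ell}$ with $u \ge x$. The hypothesis gives $b-a \ge \theta\tbinom{x}{\ell}$. Taking $w$ a suitable constant multiple of $\theta\tbinom{x}{\ell}$ (chosen so the corresponding $u_0$ stays within a bounded factor of $x$ by Lemma \ref{binomratio}) produces $mw = \Omega(\theta/(x-\ell+1))$, so $y - \tbinom{x}{\ell} \le 4\Phi/(mw) = O(c\theta^{-1}\tbinom{x-1}{\ell-1})$ and the constants fit within $72$.

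The ``furthermore'' assertion is the main obstacle: the error term now reads $\Phi' = c'\ell(x-\ell)\tbinom{x}{\ell-1}/x^3$, and the target tightens to $250 c'\tbinom{x-3}{\ell-2}$ without any $\theta^{-1}$ factor. Here I would use the additional hypothesis $\tbinom{x}{\ell} < \tbinom{n-1}{\ell} + \tfrac12\tbinom{n-1}{\ell-1}$, which rearranges to $b-a > \tfrac12\tbinom{n-1}{\ell-1}$, to take $w = \tfrac14\tbinom{n-1}{\ell-1} \le (b-a)/2$. The curvature bound of Lemma \ref{deriv} at the corresponding $u_0$ combined with the identity $\tfrac{\ell(x-\ell)}{x^2}\tbinom{x-1}{\ell-1} \le 2\tbinom{x-3}{\ell-2}$ recorded at the start of Section 2 converts $4\Phi'/(mw)$ into a constant multiple of $\tbinom{x-3}{\ell-2}$, and careful tracking of the absolute constants should yield the asserted factor $250$.
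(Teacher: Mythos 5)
Your reduction to Lemma \ref{defect} with curvature input from Lemma \ref{deriv} is exactly the paper's strategy, and your treatment of (i), (ii), and the first inequality of (iii) is essentially the paper's argument up to bookkeeping (the paper takes $a=\tfrac12$ rather than $a=0$ in (i), which keeps the whole interval $[a,a+w]$ in a region where the $\alpha=\tfrac12$ curvature estimate of Lemma \ref{deriv} applies; your $a=0$ version would additionally need to check a lower bound on $-f_\ell''$ for $t<1$, i.e.\ for $u<\ell$, which is not covered by the estimate you cite).

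The ``furthermore'' in (iii) is where your proposal has a genuine gap. You take $w=\tfrac14\tbinom{n-1}{\ell-1}$, but nothing in the hypotheses controls $n$ in terms of $x$: the condition $\tbinom{x}{\ell}<\tbinom{n-1}{\ell}+\tfrac12\tbinom{n-1}{\ell-1}$ is automatic whenever $x\le n-1$, so $\tbinom{n-1}{\ell-1}$ can be arbitrarily large compared with $\tbinom{x}{\ell}$. Then $a+w$ is on the scale of $\tbinom{n}{\ell}$, so the $u_0$ with $\tbinom{u_0}{\ell}=a+w$ is near $n$ and Lemma \ref{deriv} only supplies $m$ of order $(9(n-\ell+1)\tbinom{n}{\ell})^{-1}$ on $[a,a+w]$. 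The resulting $4\Phi'/(mw)$ then carries uncancelled factors of $n$ and cannot be brought down to a constant multiple of $\tbinom{x-3}{\ell-2}$, however carefully the absolute constants are tracked. The paper avoids this by \emph{not} reapplying Lemma \ref{defect}: it substitutes into the already-proved first bound of (iii). The extra hypothesis is used only to check that $\tT=\tfrac{\ell}{3(x-\ell)}$ satisfies $(1+\tT)\tbinom{x}{\ell}\le\tbinom{n}{\ell}$; with that $\tT$ the implicit $w=\tfrac{\tT}{2}\tbinom{x}{\ell}$ stays on the $x$-scale, and plugging $\tT^{-1}=\tfrac{3(x-\ell)}{\ell}$ and $c=\tfrac{c'\ell(x-\ell)}{x^3}\tbinom{x}{\ell-1}\,x\,\tbinom{x}{\ell}^{-1}\le\tfrac{c'\ell^2}{x^2}$ into $72c\tT^{-1}\tbinom{x-1}{\ell-1}$, then using $\tfrac{\ell(x-\ell)}{x^2}\tbinom{x-1}{\ell-1}\le 2\tbinom{x-3}{\ell-2}$, yields a bound of the stated shape. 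The key idea missing from your plan is this passage to a small, $x$-scale value of $\tT$, which is what makes the $n$-dependence disappear.
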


\begin{proof}
For (i), let $a = \frac {1}{2}$, $b = X-\frac {1}{2}$ and note that
$1+f_\ell(X) \le f_\ell(a)+f_\ell(b)$ by concavity,
so $f_\ell(y) + f_\ell(z) < f_\ell(a)+f_\ell(b) + \Phi$,
where $\Phi = \tfrac{c}{x} \tbinom{x}{\ell}$.
Applying Lemma \ref{defect} with 
$w = \tfrac{1}{3}\tbinom{x}{\ell}$ 
and $m = (16(x-\ell+1)\tbinom{x}{\ell})^{-1}$
(by Lemma \ref{deriv} with $\aA=\frac {1}{2}$)
gives $y-a \le 4\Phi/mw
\le 4 \tfrac{c}{x} \tbinom{x}{\ell-1} \cdot 48(x-\ell+1)
\leq 200 c \tbinom{x-1}{\ell-1}$. Now if $\frac {1}{2} \leq 200 c \tbinom {x-1}{\ell -1}$ this gives $y \leq 400 c \tbinom {x-1}{\ell -1 }$. Otherwise $y < \frac {1}{2} + 200 c \binom {x-1}{\ell -1} < 1$ giving $y = 0 < 400 c \binom {x-1}{\ell -1}$ by integrality.
The proof of (ii) is the same,
using $b = \tbinom{x}{\ell} + E - a$.
Similarly, for (iii), applying Lemma \ref{defect} with 
$a = \tbinom{x}{\ell}$, $b = \tbinom{n}{\ell}$,
$\Phi = \tfrac{c}{x} \tbinom{x}{\ell}$,
$w = \tfrac{\tT}{2}\tbinom{x}{\ell}$ and
$m = (9(x-\ell+1)\tbinom{x}{\ell})^{-1}$ (taking $\alpha =1$) gives
$y-a \le 72 c\tT^{-1} \tbinom{x-1}{\ell-1}$.
For the `furthermore' statement, we apply this bound
with $c = \tfrac {c'\ell(x-\ell)}{x^3} \tbinom {x}{\ell-1} 
x \tbinom {x}{\ell}^{-1} \le \tfrac {c'\ell^2}{x^2}$, noting that 
$\tbinom{n}{\ell} \ge \tbinom{x}{\ell} + \tfrac{1}{2} \tbinom{n-1}{\ell-1}
= \tbinom{x}{\ell} + \tfrac{\ell}{2(n-\ell)} \tbinom{n-1}{\ell}
\ge (1+\tT) \tbinom{x}{\ell}$ with $\tT = \tfrac{\ell}{3(x-\ell)}$.
\end{proof}

Next we give a similar statement to that of the previous lemma
for certain sums involving both $f_k$ and $f_{k-1}$.

\begin{lem} \label{defectapp2}
Let $x \geq k \ge 3$, $X = \tbinom {x-1}{k}$ 
and $Y = \tbinom {x-1}{k-1}$.
Suppose $0 \le y \le Y$ with
$f_k(X+y) + f_{k-1}(Y-y) < \tbinom{x}{k-1} 
+  \tfrac{c}{x} \tbinom{x}{k-1}$.
Then $y \notin [600 cY, (1-600c)Y]$.
Furthermore, if $x \geq k+1$ then 
$y \notin [10^7 c \tbinom{x-2}{k-1}, (1-600c)Y]$.
\end{lem}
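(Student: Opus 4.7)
The plan is to exploit concavity of $q(y) := f_k(X+y) + f_{k-1}(Y-y)$ as a function of $y$ on $[0,Y]$. By Lemma \ref{deriv} both summands are concave, so $q$ is concave, with boundary values $q(0) = \tbinom{x-1}{k-1} + \tbinom{x-1}{k-2} = \tbinom{x}{k-1}$ and $q(Y) = f_k(\tbinom{x}{k}) + f_{k-1}(0) = \tbinom{x}{k-1} + 1$. Introduce the concavity excess $\tilde q(y) := q(y) - \tbinom{x}{k-1} - y/Y$; this is concave, non-negative on $[0,Y]$ (by Lemma \ref{concavecor}), and vanishes at both endpoints. Since $\tbinom{x}{k-1}/x = Y/(x-k+1)$, the hypothesis translates to $\tilde q(y) < cY/(x-k+1)$.

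For the first bound I apply Lemma \ref{taylor} twice: once to $\tilde q$ on $[0,Y/2]$ to bound $y$ from above, and once to the reflection $\hat q(y) := \tilde q(Y-y)$ on the same interval to bound $Y-y$ from above. On $[0,Y/2]$, $Y-y \ge Y/2$, so letting $\tbinom{x''}{k-1} = Y-y$ we have $(x''-k+2)(Y-y) \le (x-k+1)Y$, and Lemma \ref{deriv} (with $\alpha = x-k+1 \ge 1$) yields $-f_{k-1}''(Y-y) \ge m := 1/(9(x-k+1)Y)$, with the $-f_k''$ term giving additional slack. Taking $w=Y/2$, Lemma \ref{taylor} then gives $\tilde q(d) \ge dwm/4 = d/(72(x-k+1))$, and combined with $\tilde q(d) < cY/(x-k+1)$ this yields $d < 72cY$; the reflected argument gives $Y-y < 72cY$. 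Hence $y \notin [72cY, (1-72c)Y] \supset [600cY, (1-600c)Y]$.

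For the furthermore statement (under $x \ge k+1$) only the lower cutoff needs sharpening, since Step 1 already supplies $y > (1-72c)Y > (1-600c)Y$. When $\tbinom{x-2}{k-1} > Y/2$ (equivalently $(x-k)/(x-1) > 1/2$), the sharpened bound follows trivially from Step 1 since then $10^7 c\tbinom{x-2}{k-1} > 5\cdot 10^6 cY \gg 72cY$. Otherwise, with $w := \tbinom{x-2}{k-1} \le Y/2$, I apply Lemma \ref{taylor} on $[0,w]$ using the key identity $X+w = \tbinom{x-1}{k} + \tbinom{x-2}{k-1} = \tbinom{x}{k} - \tbinom{x-2}{k-2}$, which shows that the parameter $x'$ defined by $\tbinom{x'}{k} = X+y$ stays strictly below $x$ throughout; Lemma \ref{deriv} applied with $\alpha = x-k \ge 1$ then delivers an effective uniform estimate on $-f_k''(X+y)$ over the smaller interval. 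Converting the resulting bound via the binomial identities $\tbinom{x-2}{k-1}/\tbinom{x}{k} = k(x-k)/(x(x-1))$ and $Y = \tbinom{x-2}{k-1}(x-1)/(x-k)$ gives the desired $y < 10^7 c\tbinom{x-2}{k-1}$.

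The main obstacle is tracking the multiplicative constants in Step 2, since Lemma \ref{deriv}'s bound on $-f_k''$ can be loose by a factor of order $k/\log k$ in the extremal regime $x = k+1$, and the generous constant $10^7$ in the statement is designed to absorb precisely this slack together with the $(2+\alpha^{-1})^2 \le 9$ factor. In that regime a cleaner route that comfortably fits within $10^7$ is to bypass Lemma \ref{taylor} and use the direct concavity bound $\tilde q(y) \ge y\,\tilde q'(y)$ (valid while $y$ is below the unique maximum of $\tilde q$), combined with the explicit formula in Lemma \ref{deriv} for $f_k'$ and $f_{k-1}'$ which keeps $q'(y)$ comparable to $k/2$ on $[0, \tbinom{x-2}{k-1}]$.
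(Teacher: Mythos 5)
Your high-level plan — linearise, use concavity plus the second-derivative bound from Lemma \ref{deriv} to get a quantitative excess via Lemma \ref{taylor}, then bootstrap the ``furthermore'' bound near $x\approx k$ with a direct derivative estimate — is the same as the paper's, just packaged with a single excess function $\tilde q$ instead of the paper's $h_k$, $h_{k-1}$. However, two concrete gaps stand out.

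First, in Step 1 your application of Lemma \ref{deriv} to $f_{k-1}$ with ``$\alpha = x-k+1 \ge 1$'' conflates the factor $(x-k+1)$ that appears in the final denominator with the admissibility parameter $\alpha$. For $f_{k-1}$ evaluated at $t=Y-y=\tbinom{x''}{k-1}$, Lemma \ref{deriv} requires $x'' \ge k-2+\alpha$; since $\tbinom{k-1}{k-1}=1$, taking $\alpha \ge 1$ forces $Y-y \ge 1$, which can fail on $[0,Y/2]$ when $x$ is close to $k$ (e.g.\ $x=k$ gives $Y=1$ and $Y-y$ as small as $1/2$). This is exactly why the paper restricts the second-derivative estimate to $[Y/4,Y/2]$ with $\alpha=1/4$, after first checking $\tbinom{k-2+1/4}{k-1}\le 1/4\le Y/4$, and then extends by pure concavity. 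Your argument can be repaired along the same lines (at the cost of larger but still admissible constants), but as written the lower bound $m = 1/(9(x-k+1)Y)$ is not justified on all of $[0,Y/2]$.

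Second, the ``furthermore'' part is sketched but not established. The entire difficulty sits in the regime $x$ close to $k$ (the paper isolates $x < (1+e^{-9})k$ explicitly), where $Y/\tbinom{x-2}{k-1} = (x-1)/(x-k)$ blows up; there the paper carries out a careful lower bound on $h_k'(X+E)$ with $E=\tfrac12\tbinom{x-2}{k-1}$, using the logarithmic lower bound $g_k(x-1+\xi)\ge \log\tfrac{x-1+\xi}{x-k+\xi}\ge 8$ to beat the chord slope $e_k'$. Your stated route — ``$q'(y)$ comparable to $k/2$ on $[0,\tbinom{x-2}{k-1}]$'' — is asserted, not proved, and already fails numerically for small $k$ (e.g.\ $k=3$, $x=k+1$ gives $q'(0)=15/11-2/5\approx 0.96 < k/2 = 1.5$). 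The ``direct concavity bound $\tilde q(y) \ge y\,\tilde q'(y)$'' is also only useful once you show $\tilde q'(y)$ stays bounded below on the interval in question, which is precisely the content of the paper's claim on $h_k'(X+E)$; the key inequality has not been supplied.
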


\begin{proof}
Let $e_k:[X,X+Y] \to \mb{R}$ and $e_{k-1}:[0,Y] \to \mb{R}$ 
be the linear functions with 
$e_k(X)=\tbinom{x-1}{k-1}$, $e_k(X+Y)=\tbinom{x}{k-1}$,
$e_{k-1}(0)=0$ and $e_{k-1}(Y)=\tbinom{x-1}{k-2}$. Note that
\[ e_k(X+y) + e_{k-1}(Y-y) 
= \tbinom{x-1}{k-1} + \tfrac{y}{Y} 
(\tbinom{x}{k-1} - \tbinom{x-1}{k-1})
+ (1 - \tfrac{y}{Y}) \tbinom{x-1}{k-2}
= \tbinom{x}{k-1}. \]
Let $h_k=f_k-e_k$ and $h_{k-1}=f_{k-1}-e_{k-1}$.
Then $h_k$ and $h_{k-1}$ are concave and non-negative,
with $h_k(X)=h_k(X+Y)=h_{k-1}(Y)=0$, $h_{k-1}(0)=1$ and 
$h_k(X+y) + h_{k-1}(Y-y) < \tfrac{c}{x} \tbinom{x}{k-1}$.

Next note that $\tbinom{k-2+1/4}{k-1} \le 1/4 \le Y/4$,
so Lemma \ref{deriv} with $\aA=1/4$ gives
$-h_{k-1}''(t) \ge m = (18(x-k+1)Y)^{-1}$ for $t \in [Y/4,Y/2]$.
Applying Lemma \ref{taylor} with $a = d = w = Y/4$ and $b=Y$ 
gives $h_{k-1}(Y/2) \ge (Y/4)^2 m/4 = (1152(x-k+1))^{-1} Y$.
By concavity, for $z \in [600 cY, (1-600c)Y]$
we have $h_{k-1}(z) \ge 1200c h_{k-1}(Y/2) 
> \tfrac{c}{x} \tbinom{x}{k-1} >  h_{k-1}(Y-y)$,
so $y \notin [600 cY, (1-600c)Y]$.

For the `furthermore' statement, 
we can assume $x < (1+\gG)k$, with $\gG := e^{-9}$,
as otherwise $Y = \tfrac{x-1}{x-k} \tbinom{x-2}{k-1}
\le \tfrac{1+\gG}{\gG} \tbinom{x-2}{k-1}$,
so $600cY < 10^7 c \tbinom{x-2}{k-1}$.
Let $E = \tfrac{1}{2} \tbinom{x-2}{k-1}$
and define $\xi$ by $X + E = \tbinom{x-1 +\xi}{k}$,
so $0 < \xi \le \tfrac{1}{2}$ by (\ref{mvt}).
We claim that $h'_k(X+E) \ge \tfrac{x}{12(x-k+1/2)(x-k+1)}$.

First we assume the claim and complete the proof.
We have $h'_k(X+z) \ge h'_k(X+E)$ for $z \in [0,E]$,
so $h_k(E) \ge \tfrac{xE}{12(x-k+1/2)(x-k+1)}
\ge \tfrac{1}{18x} \tbinom{x}{k-1}$. Then by concavity
$h_k(X+z) > \tfrac{c}{x} \tbinom{x}{k-1} > h_k(X+y)$
for all $z \in [10^7 c \tbinom{x-2}{k-1}, Y/2]$,
so $y \notin [10^7 c \tbinom{x-2}{k-1}, (1-600c)Y]$.

To prove the claim, we note that $e_k$ has gradient
$\big ( \tbinom {x}{k-1} - \tbinom {x-1}{k-1} \big ) 
\big ( \tbinom {x}{k} - \tbinom {x-1}{k} \big )^{-1} 
\le \tfrac{k}{x-k+1}$, so by Lemma \ref{deriv} 
\begin{align}
\label{equation: deriv lower bound}
 f_k'(X + E) - e_k'(X+z) 
& \ge \frac {k}{x-k+\xi}
 \Big ( 1 - \frac{1}{(x-k+\xi)g_k(x-1+\xi)} \Big ) 
 - \frac {k}{x-k+1} \nonumber \\ 
& = \frac {k(1-\xi) }{(x-k+\xi)(x-k+1)} 
 - \frac {k}{(x-k+\xi )^2g_k(x-1 + \xi)} \nonumber \\ 
& \geq	\frac {1}{x-k+\xi} \Big ( \frac{x}{3(x-k+1)} 
 - \frac {k}{(x-k+\xi)g_k(x-1 + \xi)} \Big ),
\end{align}
as $k(1-\xi) \ge \tfrac{x}{2(1+\gG)} \ge \tfrac{x}{3}$.
As $x \geq k+1$ we have $x-k+\xi \geq (x-k+1)/2$ and $x \leq (1+\gG )k$ 
gives $\log \big ( \tfrac {x-1+\xi }{x-k+\xi } \big ) 
\geq \log \big ( \tfrac {1+ \gG }{\gG } \big ) \geq 8$. Thus
\[ x(x-k+\xi )g_k(x-1+\xi)  \geq x(x-k+\xi ) \log 
  \Big ( \tfrac {x-1 + \xi }{x-k+\xi } \Big )  
 \geq 4k (x-k+1). \]
In combination with \eqref{equation: deriv lower bound} 
this proves the claim, and so the lemma.
\end{proof}

We conclude this section with a technical lemma 
needed in the next section.

\begin{lem}\label{phi}
Let $k \ge 3$. Define $\phi:[1,k+1] \to \mb{R}$ by
$\phi(t) = k - \tfrac {t-1}{2} - \tfrac{k}{x-k+1}$,
where $k \le x \le k+1$ with $\tbinom{x}{k} = t$.
Then $\phi(1)=\phi(k+1)=0$, $\phi$ is concave,
and $\phi(2) > \tfrac{3}{4}$.
\end{lem}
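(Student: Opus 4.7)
The plan is to establish the three claims in turn: first the routine endpoint computation, then concavity via a direct chain-rule argument, and finally the numerical lower bound at $t=2$.

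The endpoint identities $\phi(1)=\phi(k+1)=0$ are immediate. Since $\binom{x}{k}$ is strictly increasing on $[k,k+1]$, taking values $1$ at $x=k$ and $k+1$ at $x=k+1$, we have $x-k+1=1$ when $t=1$ and $x-k+1=2$ when $t=k+1$; plugging into the definition gives $\phi(1)=k-0-k=0$ and $\phi(k+1)=k-k/2-k/2=0$.

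For concavity, the $-(t-1)/2$ summand is linear in $t$, so it suffices to prove that $t\mapsto -k/u$ is concave, where $u=x-k+1$. Using $dt/dx=\binom{x}{k}g_k(x)=tg_k(x)$ and $du/dx=1$, the chain rule gives
\[
\frac{d}{dt}\!\left(-\tfrac{k}{u}\right) \;=\; \frac{k/u^2}{t\,g_k(x)} \;=\; \frac{k}{A(x)}, \qquad A(x):=u^2\,t\,g_k(x),
\]
and differentiating once more with respect to $t$ produces a second derivative with sign opposite to $A'(x)$. Thus concavity reduces to showing $A'(x)\ge 0$ on $[k,k+1]$. Factor $A(x)=t(x)\cdot B(x)$ with $B(x)=u^2 g_k(x)=\sum_{i=0}^{k-1}u^2/(x-i)$; since $t(x)>0$ is increasing, it is enough to show $B'(x)\ge 0$, which follows termwise from
\[
\frac{d}{dx}\!\left(\frac{u^2}{x-i}\right) \;=\; \frac{u\bigl(2(x-i)-u\bigr)}{(x-i)^2} \;\ge\; 0,
\]
using $x-i\ge x-(k-1)=u$ for every $0\le i\le k-1$.

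For $\phi(2)>3/4$, let $x_0\in(k,k+1)$ be the unique root of $\binom{x_0}{k}=2$ and set $u_0=x_0-k+1$, so that $\phi(2)=k-\tfrac12-k/u_0$; the claim is equivalent to the quantitative bound $u_0>k/(k-5/4)$. I would establish this by rewriting the defining equation as $\prod_{i=0}^{k-1}(u_0+i)/(i+1)=2$, and then showing that at $u=k/(k-5/4)$ the product $\prod_{i=0}^{k-1}(u+i)/(i+1)$ is strictly less than $2$; via $\log(1+y)\le y$ this reduces to comparing $(u-1)H_k$ (where $H_k=\sum_{i=1}^k 1/i$) with $\log 2$, plus a small correction from the $i=0$ factor. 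This last step is the main obstacle: the inequality is fairly tight for small $k$, so the approximation errors have to be tracked carefully. By contrast, the concavity step is a formal chain-rule calculation once one observes $x-i\ge u$, and the endpoint identities are trivial.
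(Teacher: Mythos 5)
Your endpoint computation and concavity argument are both correct, and the concavity step is a genuinely different (and more hands-on) route than the paper's. The paper observes that $t \mapsto \binom{t}{k}$ is convex and increasing, so its inverse $x(t)$ is concave, and then $t \mapsto -k/(x(t)-k+1)$ is concave as the composition of an increasing concave map $y \mapsto -k/y$ with a concave map; adding the linear piece $-\tfrac{t-1}{2}$ preserves concavity. Your direct chain-rule calculation reaching the termwise inequality $\tfrac{d}{dx}(u^2/(x-i)) = u(2(x-i)-u)/(x-i)^2 \geq 0$ (since $x-i \geq u$ for $0\le i\le k-1$) is a clean substitute and buys nothing over the paper's version, but it is complete and correct.

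The $\phi(2)>\tfrac34$ step is where the argument breaks, and the gap is more serious than a ``tight but trackable'' approximation. Your reduction to ``show that $\prod_{i=0}^{k-1}\tfrac{u+i}{i+1}<2$ at $u=k/(k-5/4)$'' is logically sound, but the resulting inequality is \emph{false} for small $k$: at $k=3$ one has $u=12/7$ and $\prod_{i=0}^{2}\tfrac{u+i}{i+1}=\tfrac{12\cdot 19\cdot 26}{7^3\cdot 6}\approx 2.88>2$, so the product exceeds $2$ rather than falling below it. Equivalently, solving $\binom{x_0}{3}=2$ gives $x_0\approx 3.435$, hence $u_0=x_0-2\approx 1.435$ and $\phi(2)=3-\tfrac12-3/u_0\approx 0.41<\tfrac34$; for $k=4$ one similarly finds $\phi(2)\approx 0.59$. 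In other words the inequality $\phi(2)>\tfrac34$, as stated, fails for $k\in\{3,4\}$, so no valid proof of it can exist. (You should also be aware that the paper's own proof of this bound is not correct: the step $2=\binom{k+\theta}{k}\le \theta\binom{k+\theta}{k}g_k(k+\theta)$ drops the $\binom{k}{k}=1$ term from the mean value/convexity bound, and the estimate $g_k(k+\theta)\le \log\tfrac{k+1+\theta}{1+\theta}$ is also false for small $k$; the correct integral comparison gives only $g_k(k+\theta)\le\log\tfrac{k+\theta}{\theta}$.) The way this lemma is applied in Lemma 3.4 only uses a bound of the shape $\phi(2)\ge \tfrac{3(k-1-\delta k)}{4k}$, which is weaker than $3/4$ and remains correct for the computed values, so the downstream argument survives with a smaller constant; but the lemma as stated, and any attempt to prove $\phi(2)>\tfrac34$ for all $k\ge 3$, cannot succeed.
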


\begin{proof}
We have $\phi(1) = k - \tfrac {1-1}{2}  - \tfrac{k}{k-k+1} = 0$
and $\phi(k+1) = k - \tfrac {k+1-1}{2} - \tfrac{k}{k+1-k+1}=0$.
Also, $t(x) = \tbinom{x}{k}$ is a convex function of $x$,
so has a concave inverse $x(t)$, so $-1/x(t)$ is concave,
so $\phi$ is concave. To estimate $\phi(2)$,
we let $\tT \in (0,1)$ be such that $\tbinom{k+\tT}{k}=2$,
and apply the Mean Value Theorem to get
$2 = \tbinom{k+\tT}{k} \le \tT \tbinom{k+\tT}{k} g_k(k+\tT) 
\le 2\tT \log \tfrac{k+1+\tT}{1+\tT} \le 2\tT \log (k+1)$,
so $\tT \ge 1/\log(k+1)$. Then $\phi(2)
\ge k - \tfrac {1}{2} - \tfrac{k}{1+1/\log(k+1)}
= \tfrac{k}{1+\log(k+1)} - \tfrac {1}{2}
\ge \tfrac{3}{1+\log(4)} - \tfrac {1}{2} > \tfrac{3}{4}$.
\end{proof}

\section{Stability for the Kruskal--Katona theorem}

In this section we prove Theorem \ref{thm: KK-stab}.
We start by recording some basic properties
of shadows that will be used throughout the paper.

\begin{lem}\label{kkprops}
Let $s,k \in \mb{N}$ with $s \ge k$, 
$m=\tbinom{s}{k}$, $m'=\tbinom{s-1}{k}$ 
and $0 \le E_1,E_2 \le \tbinom{s-1}{k-1}$. Then
\begin{enumerate}[(i)]
\item $\pl(\I^{(k)}_{m' + E_1})
= \tbinom{[s-1]}{k-1} \cup ((\pl \I^{(k-1)}_{E_1})+s)$.
\item $\pl(\I^{(k)}_{m+E_2} \sm \I^{(k)}_m) = \I^{(k-1)}_{E_2} 
\cup ((\pl \I^{(k-1)}_{E_2})+(s+1))$.
\item $\pl(\J^{(k)}_{s,E_1,E_2})
= \pl(\I^{(k)}_{m' + E_1})
\cup ((\pl \I^{(k-1)}_{E_2})+(s+1))$.
\item $|\pl(\I^{(k)}_{a+b})| \le |\pl(\I^{(k)}_a)| + |\pl(\I^{(k)}_b)|$,
with strict inequality if $k \ge 2$ and $a \ge b >0$. 
\end{enumerate}
\end{lem}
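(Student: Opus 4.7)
For parts (i) and (ii), my plan is to write down the initial segments explicitly using the colex order. For (i), the first $m'=\tbinom{s-1}{k}$ elements of colex on $\tbinom{[n]}{k}$ are precisely $\tbinom{[s-1]}{k}$, and the next $E_1\le\tbinom{s-1}{k-1}$ sets all have maximum element $s$, so they are of the form $\{B+s: B \in \I^{(k-1)}_{E_1}\}$ (with the initial segment taken in colex on $\tbinom{[n]}{k-1}$, and automatically sitting in $\tbinom{[s-1]}{k-1}$ by the size bound). Taking the shadow termwise yields $\tbinom{[s-1]}{k-1} \cup \I^{(k-1)}_{E_1} \cup ((\pl \I^{(k-1)}_{E_1})+s)$, and the middle term is absorbed into the first since $\I^{(k-1)}_{E_1} \sub \tbinom{[s-1]}{k-1}$. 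Part (ii) is completely analogous: the $E_2$ sets immediately after $\I^{(k)}_m=\tbinom{[s]}{k}$ in colex all contain $s+1$ and are $\{C+(s+1): C\in \I^{(k-1)}_{E_2}\}$, from which the stated shadow formula is immediate.

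For part (iii), I would observe that $\J^{(k)}_{s,E_1,E_2}$ is the \emph{disjoint} union of $\I^{(k)}_{m'+E_1}$ and $\I^{(k)}_{m+E_2}\sm\I^{(k)}_m$, so its shadow is the union of the two shadows from (i) and (ii). The only item to verify is an absorption: the term $\I^{(k-1)}_{E_2}$ from (ii) satisfies $\I^{(k-1)}_{E_2}\sub\tbinom{[s-1]}{k-1}\sub\pl\I^{(k)}_{m'+E_1}$ (using $E_2\le\tbinom{s-1}{k-1}$ and (i)), so it is already present and only the fresh piece $(\pl\I^{(k-1)}_{E_2})+(s+1)$ contributes newly.

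For part (iv), subadditivity follows from a disjoint-copies argument. For $N$ sufficiently large, take $R\cong\I^{(k)}_a$ on $[N]$ and $S\cong\I^{(k)}_b$ on $\{N+1,\dots,N+N'\}$, so that $R$ and $S$ are disjoint with \emph{disjoint supports} and hence disjoint shadows. Writing $F(t):=|\pl\I^{(k)}_t|$, we have $|R\cup S|=a+b$ and $|\pl(R\cup S)|=F(a)+F(b)$, so Harper-style minimality of the initial segment (i.e.\ Kruskal--Katona applied to $R\cup S$) gives $F(a+b)=|\pl\I^{(k)}_{a+b}|\le |\pl(R\cup S)|=F(a)+F(b)$.

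The main obstacle is the strict inequality for $k\ge 2$ and $a\ge b>0$. My plan is to invoke the uniqueness of extremal configurations in Kruskal--Katona (F\"uredi--Griggs/M\"ors): the optimal families have a specific cascade structure whose support has size at most (roughly) $s+1$ where $a+b\in(\tbinom{s}{k},\tbinom{s+1}{k}]$, whereas $R\cup S$ on disjoint supports has total support of size at least $2k>s+1$ whenever $a,b\ge 1$ (for $k\ge 2$). Hence $R\cup S$ is not extremal, so $F(a+b)<|\pl(R\cup S)|=F(a)+F(b)$, and by integrality the inequality is strict. An elementary alternative is to induct on $b$ with base $b=1$: for $a\ge 1$, the top $(k-1)$-subset of the $(a{+}1)$-th colex $k$-set is already in $\pl\I^{(k)}_a$ (replace its maximum by any smaller element outside the set, which is possible for $a\ge 1$, $k\ge 2$), giving $F(a+1)-F(a)\le k-1<k=F(1)$; the inductive step, however, is delicate since $F(b)-F(b-1)$ can drop to $0$ at cascade boundaries, and the naive induction must be refined by splitting according to the cascade form of $b$ to conclude strictness uniformly — this refinement, rather than the subadditivity itself, is the only non-routine part of the argument.
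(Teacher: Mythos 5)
Parts (i), (ii), (iii) and the non-strict half of (iv) are correct and follow the paper's route: the paper treats (i)--(iii) as clear from the colex structure exactly as you read it off, and the subadditivity in (iv) is the paper's disjoint-copies argument verbatim.

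The strict inequality in (iv) is where you have a genuine gap. Your primary argument rests on the claim that $R\cup S$ has total support of size at least $2k$ and that $2k>s+1$, where $a+b\in(\tbinom{s}{k},\tbinom{s+1}{k}]$. The latter inequality is simply false once $a+b$ is large relative to $k$: e.g.\ with $k=2$, $a=b=190$ one has $a+b=380$, so $s=28$, while $2k=4$. What you actually need is $|V(\I^{(k)}_a)|+|V(\I^{(k)}_b)|>|V(\I^{(k)}_{a+b})|$, and the crude lower bound $2k$ discards precisely the information required to see this. Your second route (induction on $b$) is, by your own account, incomplete at the decisive step, so neither version closes the argument.

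The paper establishes strictness via \cite[Corollary 2.2]{Furedi-Griggs}, which is a sharper input than the bare uniqueness of extremal configurations: equality in Kruskal--Katona forces the support size to match that of the initial colex segment, i.e.\ $|V(\mc{A})|=|V(\I^{(k)}_{a+b})|$. The paper then refutes this by a swap: delete a vertex $v\in V(\I^{(k)}_b)$ of degree $d$ together with the $d$ sets through it, and reinsert $d$ sets $A\cup\{u\}$ with $A\in\tbinom{V(\I^{(k)}_a)}{k-1}$ and $u\in V(\I^{(k)}_b)\sm\{v\}$; these sets are fresh since they meet both vertex classes, $|\mc{A}'|=a+b$, and $|V(\mc{A}')|<|V(\mc{A})|$, while $|V(\mc{A}')|\ge|V(\I^{(k)}_{a+b})|$ because initial colex segments minimize support. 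Alternatively a direct estimate also works: with $t_a\ge t_b\ge k\ge 2$ the respective support sizes and $t$ that of $\I^{(k)}_{a+b}$, Vandermonde gives $\tbinom{t_a+t_b-1}{k}\ge\tbinom{t_a}{k}+\tbinom{t_a}{k-1}(t_b-1)+\tbinom{t_b-1}{k}\ge\tbinom{t_a}{k}+\tbinom{t_b}{k}\ge a+b$, using $\tbinom{t_a}{k-1}(t_b-1)\ge\tbinom{t_b-1}{k-1}$, whence $t\le t_a+t_b-1<|V(R\cup S)|$. Either of these supplies the missing strict support comparison; the bound $2k>s+1$ does not.
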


\begin{proof}
Statements (i) and (ii) are clear, and imply (iii), 
recalling from  (\ref{Jk}) the definition 
of $\J^{(k)}_{s,E_1,E_2}$ and noting that
$\I^{(k-1)}_{E_2} \sub \tbinom{[s-1]}{k-1}$.
For (iv), let $\mc{A}$ be the union of copies of
$\I^{(k)}_a$ and $\I^{(k)}_b$ on disjoint vertex sets.
Then $|\pl(\I^{(k)}_a)| + |\pl(\I^{(k)}_b)| = |\pl(\A)|
\ge |\pl(\I^{(k)}_{a+b})|$ by Kruskal--Katona.
If equality holds then the vertex sets satisfy 
$|V(\I^{(k)}_{a+b})| = |V(\I^{(k)}_a)| + |V(\I^{(k)}_b)|$
by \cite[Corollary 2.2]{Furedi-Griggs}. 
However, this is impossible for $k \ge 2$ and $a \ge b > 0$.
To see this, consider $\mc{A}'$ obtained from $\mc{A}$
by deleting some $v \in V(\I^{(k)}_b)$, say of degree $d$,
and adding $d$ sets $A \cup \{u\}$ with 
$A \in \tbinom{V(\I^{(k)}_a)}{k-1}$ 
and $u \in V(\I^{(k)}_b) \sm \{v\}$.
Then $|\mc{A}'|=|\mc{A}|$ and
$|V(\mc{A})| > |V(\mc{A}')| \ge |V(\I^{(k)}_{a+b})|$.
\end{proof}

Next we show local stability, i.e.\ a sharp estimate for 
the shadow of families that are close to a clique.

\begin{lem}
\label{lem: local stability KK}
Let $\A \subset \tbinom {[n]}{k}$, $s \in [n]$,
$\A_1 = \A \cap \tbinom {[s]}{k}$ and $\A_2 = \A \sm \A_1$. 
Suppose $|\A_1| = \tbinom{s-1}{k} + E_1$ and 
$|\A_2| = E_2$, with $0 \le E_1, E_2 \leq \tbinom{s-1}{k-1}$. 
Then $|\pl (\A)| \geq |\pl(\J^{(k)}_{s,E_1,E_2})|$. 
\end{lem}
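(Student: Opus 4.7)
The plan is to split both shadows along the partition $\binom{[n]}{k-1} = \binom{[s]}{k-1} \sqcup (\binom{[n]}{k-1} \setminus \binom{[s]}{k-1})$ and bound each piece. Using Lemma~\ref{kkprops}(i)--(iii), the shadow of $\J^{(k)}_{s,E_1,E_2}$ decomposes as the disjoint union of $\binom{[s-1]}{k-1}$, $(\pl \I^{(k-1)}_{E_1}) + s$, and $(\pl \I^{(k-1)}_{E_2}) + (s+1)$, distinguished by whether the $(k-1)$-sets contain $s+1$, or contain $s$ but not $s+1$, or neither; hence
\[ |\pl(\J^{(k)}_{s,E_1,E_2})| = \tbinom{s-1}{k-1} + |\pl \I^{(k-1)}_{E_1}| + |\pl \I^{(k-1)}_{E_2}|. \]

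Correspondingly, set $\mathcal{X} := \pl(\A) \cap \binom{[s]}{k-1}$ and $\mathcal{Y} := \pl(\A) \setminus \binom{[s]}{k-1}$. Since $\pl(\A_1) \subseteq \binom{[s]}{k-1}$, we have $\mathcal{X} \supseteq \pl(\A_1)$, and Kruskal--Katona applied to $\A_1$ together with Lemma~\ref{kkprops}(i) yields $|\mathcal{X}| \ge |\pl \I^{(k)}_{|\A_1|}| = \tbinom{s-1}{k-1} + |\pl \I^{(k-1)}_{E_1}|$. The remaining (and main) step is to prove $|\mathcal{Y}| \ge |\pl \I^{(k-1)}_{E_2}|$.

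For this I would first apply all the $(i,j)$-compressions of $\A$ with $s < i < j$. These act only on members of $\A_2$ (they require some $j > s$ in the set), permute outside elements for each fixed head $A \cap [s]$, preserve $|\A_1|$, $|\A_2|$ and $\mathcal{X}$ set-theoretically (a $(k-1)$-set $B \subseteq [s]$ lies in $\mathcal{X}$ iff there exists $x$ with $B+x \in \A$, and this existence is unaffected when outside elements are permuted for fixed heads), and by the standard compression argument do not increase $|\pl(\A)|$, hence do not increase $|\mathcal{Y}|$. After this, $\A_2$ is left-compressed within $(s,n]$. Considering the link $L := \{A - (s+1) : A \in \A_2,\ s+1 \in A\}$ of $s+1$ in $\A_2$, the inclusion $\mathcal{Y} \supseteq (\pl L) + (s+1)$ gives $|\mathcal{Y}| \ge |\pl L|$; if every $A \in \A_2$ contains $s+1$ then $|L| = E_2$ and Kruskal--Katona on $L$ finishes the argument.

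The hard part is the residual case, since left-compression within $(s,n]$ does not force $s+1$ to lie in every set of $\A_2$. For each $A \in \A_2$ with $s+1 \notin A$, left-compression nevertheless supplies a companion $A^c := (A-j)+(s+1) \in \A_2$ with $j := \min(A \cap (s,n]) > s+1$, and the shadows of $A$ in $\mathcal{Y}$ (namely the sets $A-h$ for $h \in A \cap [s]$) all contain $j$ and miss $s+1$, so they lie in $\mathcal{Y} \setminus ((\pl L)+(s+1))$ and provide extra contributions. I would handle these extras by inducting on $\max \bigcup \A_2$: iteratively, use further compressions (carefully chosen to keep $|\A_1|$ and $\mathcal{X}$ fixed) to eliminate the sets in $\A_2$ not containing $s+1$, reducing at the end to the case $|L| = E_2$ where Kruskal--Katona delivers $|\pl L| \ge |\pl \I^{(k-1)}_{E_2}|$ and hence $|\mathcal{Y}| \ge |\pl \I^{(k-1)}_{E_2}|$. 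Combining with the bound on $|\mathcal{X}|$ yields $|\pl(\A)| \ge |\pl(\J^{(k)}_{s,E_1,E_2})|$.
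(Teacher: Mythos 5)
Your split of $\pl(\J^{(k)}_{s,E_1,E_2})$ into the three disjoint pieces, and your lower bound $|\mathcal{X}| \ge \tbinom{s-1}{k-1} + |\pl \I^{(k-1)}_{E_1}|$ via Kruskal--Katona applied to $\A_1$, are both correct and align with the paper. The problem is the remaining inequality $|\mathcal{Y}| \ge |\pl \I^{(k-1)}_{E_2}|$, where the argument has a genuine gap that you in fact flag yourself: after left-compressing $\A_2$ inside $(s,n]$ there is no reason for every set of $\A_2$ to contain $s+1$, so the link $L$ of $s+1$ generally has $|L|<E_2$ and Kruskal--Katona on $L$ alone gives nothing. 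The proposed fix---``use further compressions (carefully chosen to keep $|\A_1|$ and $\mathcal{X}$ fixed) to eliminate the sets in $\A_2$ not containing $s+1$''---is not actually specified; it is unclear which compressions would simultaneously force all of $\A_2$ onto $s+1$, hold $\mathcal{X}$ fixed, and not increase $|\mathcal{Y}|$, and the sketched induction on $\max\bigcup\A_2$ is not carried out. As written, the proof of the main step is incomplete.

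The paper avoids compressions altogether. Instead of collapsing $\A_2$ onto the single vertex $s+1$, it partitions $\A_2$ by maximum element: for each $t>s$ let $\A^t_2=\{A-t: A\in\A_2,\ \max A=t\}\sub\tbinom{[t-1]}{k-1}$. Then the families $(\pl\A^t_2)+t$ for distinct $t>s$ are pairwise disjoint and disjoint from $\tbinom{[s]}{k-1}$, so
\[
|\mathcal{Y}| \ \ge\ \sum_{t>s}|\pl\A^t_2| \ \ge\ \sum_{t>s}\big|\pl\I^{(k-1)}_{|\A^t_2|}\big| \ \ge\ \big|\pl\I^{(k-1)}_{E_2}\big|,
\]
by Kruskal--Katona for each $t$ and then the subadditivity $|\pl\I^{(k-1)}_{a+b}|\le|\pl\I^{(k-1)}_a|+|\pl\I^{(k-1)}_b|$ of Lemma~\ref{kkprops}(iv). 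That subadditivity together with the partition by maximum element is exactly the ingredient missing from your argument; replacing the compression/link machinery with it closes the gap and reproduces the paper's proof.
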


\begin{proof}
For $s<t \le n$ let $\A^t_2$ be the set of all 
$A-t \in \tbinom {[n]}{k-1}$ 
where $A \in \A^t_2$ with $\max A = t$. Then
\[ |\pl(\A)| \ge |\pl(\A_1)| + \sum_{t>s} |\pl(\A^t_2)|
\ge |\pl(\I^{(k)}_{|\A_1|})| + \sum_{t>s} |\pl(\I^{(k-1)}_{|\A^t_2|})|
\ge |\pl(\I^{(k)}_{|\A_1|})| + |\pl(\I^{(k-1)}_{|\A_2|})|
= |\pl(\J^{(k)}_{s,E_1,E_2})|, \]
using Kruskal--Katona, then Lemma \ref{kkprops}.iv, 
and finally Lemma \ref{kkprops}.iii.
\end{proof}

Now we describe the compression operations 
that will be used throughout the paper.
Given disjoint sets $U, V \subset [n]$, the $C_{U,V}$ 
compression of a set $A \subset [n]$ is given by 
\begin{equation*}
C_{U,V}(A) := \begin{cases} 
(A \sm U) \cup V \quad &\mbox{if }
U \subset A \mbox { and } V \cap A = \es ;\\ 
A \quad &\mbox{ otherwise}. 	
\end{cases} \end{equation*}
Given a family $\A \subset \{0,1\}^n$ the 
$C_{U,V}$ compression of $\A$, denoted $C_{U,V}(\A)$, is given by 
\begin{equation*}
C_{U,V}(\A) := 
\big \{C_{U,V}(A): A \in \A \big \} 
\cup \big \{A: C_{U,V}(A) \in \A \big \}.
\end{equation*}
The following result, essentially due to Daykin \cite{Daykin} 
(see also \cite{Bol,BL,FF}) shows that 
for any $\A \subset \tbinom {[n]}{k}$ there is a sequence 
of $(U,V)$-compressions which compress $\A$ to an initial 
segment of colex with the property that successive 
compressions do not increase the shadow 
(in particular this proves Kruskal--Katona).

\begin{thm} \label{thm: KK compress}
Let $\A \subset \tbinom{[n]}{k}$ with $|\A| = 	m$. 
Then there is a sequence $\{(U_i,V_i)\}_{i\in [L]}$ 
where $U_i ,V_i \subset [n]$ are disjoint with 
$|U_i| = |V_i|$ for all $i\in [L]$,
such that defining $\A_0 = \A$ and iteratively 
$\A_i := C_{U_i,V_i}(\A_{i-1})$ for $i\in [L]$,
each $|\pl (\A_i)| \leq |\pl (\A_{i-1})|$
and $\A_L = \I^{(k)}_{m}$.
\end{thm}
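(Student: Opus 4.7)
The plan is to build the compression sequence by alternating single-element $(i,j)$-compressions that left-compress the family with larger $(U,V)$-compressions that close remaining gaps between $\A$ and $\I^{(k)}_m$. Both types preserve $|\A|$, and a shadow non-increase inequality ensures each step weakly decreases $|\pl(\A)|$; termination follows from a standard potential argument.

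The central tool is the bound $|\pl(C_{U,V}(\A))| \le |\pl(\A)|$ for disjoint $U,V$ with $|U|=|V|$. For $(i,j)$-compressions this is routine: one exhibits an injection $\pl(C_{i,j}(\A)) \hookrightarrow \pl(\A)$ by applying $C_{i,j}$ to shadow elements. For larger compressions the inequality can fail on generic families (e.g.\ $\A = \{\{3,4\},\{3,5\}\}$ with $U=\{3,4\}$, $V=\{1,2\}$ produces $\{\{1,2\},\{3,5\}\}$, whose shadow gains an element), but it does hold once $\A$ is left-compressed; this is the content of Daykin's theorem \cite{Daykin}, which I would invoke (or prove via a case-analysis injection exploiting left-compression, in the style of \cite{Bol,BL,FF}).

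The iteration then runs as follows. First, repeatedly apply $(i,j)$-compressions with $i>j$ until $\A$ is left-compressed; this terminates because the weight $w(\A) = \sum_{A\in\A}\sum_{i\in A} 2^{-i}$ strictly increases at each such step. If $\A = \I^{(k)}_m$ then stop. Otherwise pick the colex-largest $A^* \in \A \sm \I^{(k)}_m$ and the colex-smallest $B^* \in \I^{(k)}_m \sm \A$ (both exist since $|\A|=m$), and apply $C_{U,V}$ with $U = A^* \sm B^*$, $V = B^* \sm A^*$; since $A^*$ moves to the earlier-colex set $B^*$, the potential $\Phi(\A) = \sum_{A \in \A} \mathrm{rank}_{\mathrm{colex}}(A)$ strictly decreases. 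We then re-left-compress before applying the next large compression, so that the shadow bound remains valid. As $\Phi$ is a bounded non-negative integer, the process halts with $\A_L = \I^{(k)}_m$, producing the required sequence. The principal obstacle is the bookkeeping needed to maintain left-compression between large steps, which is what licenses the shadow non-increase inequality at every iteration.
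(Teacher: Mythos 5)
The paper does not supply its own proof of this theorem --- it attributes it to Daykin \cite{Daykin} (with \cite{Bol,BL,FF} for variants) and moves on, so what must be assessed is whether your reconstruction of the compression argument is sound. The overall architecture (alternate single-element compressions with larger $C_{U,V}$-compressions, track a colex-rank potential to force termination) is the right shape, but there is a genuine gap in the claim you rely on at the key step.

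You assert that once $\A$ is left-compressed, $|\pl(C_{U,V}(\A))| \le |\pl(\A)|$ for \emph{any} disjoint $U,V$ with $|U|=|V|$, and attribute this to Daykin. That is not what Daykin's lemma says, and it is not what the hypotheses of the analogous compression step require elsewhere in this paper: the discussion before Theorem~\ref{thm: compression theorem} states the condition as ``$C_{U',V'}(\A)=\A$ for all $U'\subset U$ with $|U'|=|U|-1$ and $V'\subset V$ with $|V'|=|V|-1$.'' Left-compression only gives you $(u,v)$-compression for $u>v$; it does \emph{not} give $(u,v)$-compression for $u\in U$, $v\in V$ with $u<v$, and it certainly does not give $(U',V')$-compression for larger subpairs. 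Moreover, your specific choice $U=A^*\setminus B^*$, $V=B^*\setminus A^*$ is precisely the case where this matters: since a left-compressed family is a down-set in the dominance order and $A^*\in\A$, $B^*\notin\A$, one has $B^*\not\preceq A^*$, hence (after sorting) some coordinate of $V$ strictly exceeds the corresponding coordinate of $U$. So your $C_{U,V}$ is not a ``shift'' towards smaller elements in each slot, and the standard injection argument behind Daykin's lemma does not go through from left-compression alone. The standard remedy --- used by Daykin and in the references the paper cites --- is to apply compressions in increasing order of $|U|$: among all $(U,V)$ with $|U|=|V|$, $\max U>\max V$ and $C_{U,V}(\A)\neq\A$, pick one with $|U|$ minimal, so that $\A$ is $(U',V')$-compressed for every strictly smaller pair and the shadow inequality can be established by induction on $|U|$. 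Your proposal, which fixes $U,V$ via $A^*,B^*$ and only maintains left-compression between steps, does not secure this hypothesis, so the shadow-monotonicity of each step is unjustified as written.
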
 

As discussed in the introduction, our proof of
Theorem \ref{thm: KK-stab} analyses the reversal 
of the above compressions. To do so, 
in each decompression step in which we might in theory
lose control on the distance from a clique, 
we will apply the following lemma 
which shows that this control is in fact maintained.

\begin{lem} 
\label{lem: KK clique size}
Given $k \in \mb{N}$, $\dD \in (0,1)$ and $c = 10^{-8} \dD$,
if $\A \subset \tbinom{[n]}{k}$ with $|\A| = \tbinom {x}{k}$ and  
$|\pl(\A)| \leq (1 + \tfrac{c}{x}) \tbinom {x}{k-1}$ then
\begin{enumerate}[(i)]
\item $\big | |\A| - \tbinom {M}{k} \big | 
\leq \tfrac{\dD}{2} \tbinom {M-1}{k-1}$ for some 
$M \in \{\lfloor x\rfloor , \lceil x\rceil \}$,
\item if $|\tbinom {S}{k} \sm \A| 
\leq (1-\dD)\tbinom{M-1}{k-1}$ with $|S| = M$ as (i)
then $|\tbinom {S}{k} \sm \A| \leq \dD \tbinom {M-1}{k-1}$.
\end{enumerate}
\end{lem}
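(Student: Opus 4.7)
The plan is to use Theorem \ref{thm: KK compress} to reduce to the case where $\A$ is an initial segment in colex, and then apply concavity estimates in the spirit of Lemma \ref{defectapp2} to pin down how close $|\A|$ (and, in part (ii), $|\A \cap \tbinom{S}{k}|$) is to an integer-valued binomial coefficient $\tbinom{M}{k}$.

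For part (i), compressions do not increase the shadow, so WLOG $\A = \I^{(k)}_{|\A|}$. Take $M = \lceil x \rceil$; if $x \in \mb{N}$ the claim is immediate, so assume $|\A| = \tbinom{M-1}{k} + r$ with $r \in (0, Y)$, $Y := \tbinom{M-1}{k-1}$. By Lemma \ref{kkprops}.(i), $|\pl \A| = \tbinom{M-1}{k-1} + |\pl \I^{(k-1)}_r|$. Combining this with the Lov\'asz bound $|\pl \I^{(k-1)}_r| \ge f_{k-1}(r)$ and the hypothesis $|\pl \A| \le (1 + c/x) \tbinom{x}{k-1}$, I obtain
\[ F(r) \;:=\; f_k(\tbinom{M-1}{k}) + f_{k-1}(r) - f_k(\tbinom{M-1}{k} + r) \;\le\; \tfrac{c}{x}\, f_k(\tbinom{M-1}{k} + r). \]
The function $F$ satisfies $F(0) = 1$, $F(Y) = 0$, and (by Lemma \ref{concavecor}) $F \ge 0$ on $[0, Y]$. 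Using the quantitative curvature of $f_{k-1}$ near $0$ from Lemma \ref{deriv} together with a Taylor-type argument paralleling the proof of Lemma \ref{defectapp2}, I expect to show $F(r) > \tfrac{c}{x}\, f_k(\tbinom{M-1}{k} + r)$ for $r \in [600 c Y,\ (1 - 600 c) Y]$, giving a contradiction. Hence $r \le 600 c Y$ or $Y - r \le 600 c Y$; in the first case take $M := \lfloor x \rfloor$ and in the second take $M := \lceil x \rceil$, and with $c = 10^{-8}\dD$ the bound $\bigl|\,|\A| - \tbinom{M}{k}\bigr| \le (\dD/2)\tbinom{M-1}{k-1}$ follows comfortably.

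For part (ii), set $E_1 = |\A \cap \tbinom{S}{k}| - \tbinom{M-1}{k}$ and $E_2 = |\A \sm \tbinom{S}{k}|$; the hypothesis translates to $E_1 \ge \dD Y$, and by part (i) we have $r := E_1 + E_2 \ge (1 - 600 c) Y$. Lemma \ref{lem: local stability KK} combined with Lemma \ref{kkprops}.(iii) gives $|\pl \A| \ge \tbinom{M-1}{k-1} + |\pl \I^{(k-1)}_{E_1}| + |\pl \I^{(k-1)}_{E_2}|$. The key additional ingredient beyond part (i) is a quantitative version of Lemma \ref{kkprops}.(iv): a defect estimate bounding $|\pl \I^{(k-1)}_{E_1}| + |\pl \I^{(k-1)}_{E_2}| - |\pl \I^{(k-1)}_{E_1 + E_2}|$ from below by a constant multiple of $\min(E_1, E_2)$, again exploiting the curvature of $f_{k-1}$ from Lemma \ref{deriv}. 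Combined with the upper bound on $|\pl \A|$ this forces $\min(E_1, E_2) \le 600 c Y$; since $E_1 \ge \dD Y \gg 600 c Y$, we must have $E_2 \le 600 c Y$, and therefore $|\tbinom{S}{k} \sm \A| = Y - E_1 = (Y - r) + E_2 \le 1200 c Y \le \dD Y$.

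The main obstacle is obtaining these two concavity-based lower bounds: on $F(r)$ in part (i), and on the splitting defect $|\pl \I^{(k-1)}_{E_1}| + |\pl \I^{(k-1)}_{E_2}| - |\pl \I^{(k-1)}_{E_1+E_2}|$ in part (ii). Both should follow the pattern of the proof of Lemma \ref{defectapp2}: subtract an appropriate linear interpolation through the endpoint values, then apply Lemma \ref{taylor} to the resulting concave non-negative defect using the quantitative second-derivative bound of Lemma \ref{deriv} for $f_{k-1}$ (e.g.\ with $\aA = 1/4$). The somewhat delicate point is that $F$ itself is concave plus convex rather than literally concave, so the linear comparison function must be chosen to isolate the concave $f_{k-1}$-part before applying the Taylor estimate.
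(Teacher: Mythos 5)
The overall strategy is right (reduce via Kruskal--Katona/compressions, then exploit concavity of the $f_\ell$ functions and Taylor estimates), and your identification of the ``concave plus convex'' subtlety is exactly the issue that Lemma~\ref{defectapp2} resolves by subtracting separate linear interpolations from $f_k$ and $f_{k-1}$. But there is a genuine gap in part~(i). You claim that a symmetric curvature bound $r \notin [600cY, (1-600c)Y]$ with $Y = \tbinom{M-1}{k-1}$, $M = \lceil x\rceil$, makes the conclusion ``follow comfortably,'' and in the case $r \le 600cY$ you take $M' = \lfloor x\rfloor$ and need $r \le (\dD/2)\tbinom{M'-1}{k-1}$. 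But $\tbinom{M'}{k-1}/\tbinom{M'-1}{k-1} = M'/(M'-k+1)$ is unbounded as $x \to k$, so $600c\tbinom{\lfloor x\rfloor}{k-1} \le (\dD/2)\tbinom{\lfloor x\rfloor - 1}{k-1}$ fails once $k$ is large (roughly $k \gtrsim \dD/c$). The paper fixes this with the \emph{asymmetric} ``furthermore'' clause of Lemma~\ref{defectapp2}, which gives the much sharper $y < 10^7 c\tbinom{x-2}{k-1}$ when $y$ is small; this is proved by a slope estimate on $h_k$ near $X$ (equation~\eqref{equation: deriv lower bound}), not by the curvature/Taylor argument that yields the $600cY$ endpoint. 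That clause requires $x \ge k+1$ and $k \ge 3$, which forces the separate cases $k\le 2$ and $x < k+1$ that the paper handles directly (via $|\pl(\A)|$ for $k=2$, and via Lemma~\ref{phi} for $3\le k$, $x<k+1$) and that your proposal omits.

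For part~(ii) your proposed new ingredient is also flawed: a lower bound on $|\pl\I^{(k-1)}_{E_1}| + |\pl\I^{(k-1)}_{E_2}| - |\pl\I^{(k-1)}_{E_1+E_2}|$ of the form $\Omega(\min(E_1,E_2))$ is simply false. For $k-1=1$ the defect is identically $1$; for $k-1=2$ with $E_1=E_2=m$ the defect is $\Theta(\sqrt m)$, not $\Theta(m)$. Concavity of $f_{k-1}$ gives you at best a defect on the order of $\min(E_1,E_2)$ times the local second derivative, which is much smaller. The paper sidesteps this entirely: from Lemma~\ref{lem: local stability KK} and Lemma~\ref{kkprops}.iii it bounds $|\pl\A| \ge f_k\bigl(\tbinom{M-1}{k}+E_1\bigr) + f_{k-1}(E_2)$, i.e.\ it keeps the $f_k$/$f_{k-1}$ split rather than breaking the first term into $\tbinom{M-1}{k-1} + f_{k-1}(E_1)$, and then applies the \emph{same} Lemma~\ref{defectapp2} again (with $y = \tbinom{M-1}{k}+E_1-\tbinom{x-1}{k}$, $E_2=Y-y$). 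So the paper needs no new defect lemma for (ii), only the already-proved one. Your plan would require proving a quantitative defect bound which, in the form stated, cannot be proved; a corrected version would essentially recreate Lemma~\ref{defectapp2} with $f_{k-1}$ in place of $f_k$, which is more work than just reusing it as the paper does. Also a minor numerical slip: from part~(i) you get $r \ge (1-\dD/2)Y$, not $r \ge (1-600c)Y$; this does not break the argument but the claimed inequality is not what (i) delivers.
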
 
		
\begin{proof} 
The case $k=1$ is trivial. Next we consider $k = 2$. Let $M = |\pl(\A)|$. 
Then $x \le M \le (1 + \tfrac {c}{x}) \tbinom {x}{1} = x + c$
and $|\A| = \tbinom {M\pm c}{2} = \tbinom {M}{2} \pm \dD (M-1)$,
so (i) holds. For (ii), let $\A' = \A \cap \tbinom {S}{k}$,
and note that $|\A'| > \tbinom{M-1}{2}$. Then $\pl \A' = S = \pl \A$
by Kruskal--Katona, so $|\tbinom {S}{k} \sm \A|=0$.
Thus we may assume $k\geq 3$. 
We write $|\A| = \tbinom {x}{k} = X+Y$ with 
$X = \tbinom {x-1}{k}$ and $Y = \tbinom {x-1}{k-1}$.

Next we assume (i) holds and prove (ii).
Write $\A_1 = \A \cap \tbinom {S}{k}$, $\A_2 = \A \sm \A_1$,
$|\A_1| = \tbinom{M-1}{k} + E_1$ and $|\A_2| = E_2$.
We have $0 \le E_1 \le \tbinom {M}{k} - \tbinom {M-1}{k} 
= \tbinom{M-1}{k-1}$
and $0 \le E_2 \le |\tbinom {S}{k} \sm \A| 
+ \tfrac{\dD}{2} \tbinom {M-1}{k-1} 
\le (1-\tfrac{\dD}{2}) \tbinom{M-1}{k-1}$,
so $|\pl (\A)| \geq |\pl(\J^{(k)}_{M,E_1,E_2})|$
by Lemma \ref{lem: local stability KK}.
By the Lov\'asz version of the Kruskal--Katona theorem 
and Lemma \ref{kkprops}.iii we deduce
$f_k(\tbinom{M-1}{k-1}+E_1) + f_{k-1}(E_2) 
< (1 + \tfrac{c}{x}) \tbinom {x}{k-1}$.
With notation as in Lemma \ref{defectapp2},
writing $E_2=Y-y$ we have $\tbinom{M-1}{k-1}+E_1=X+y$,
so $y \notin [600 cY, (1-600c)Y]$.
As $\tbinom{x}{k} = |\A| = \tbinom {M}{k} \pm
\tfrac{\dD}{2} \tbinom {M-1}{k-1}
= (1 \pm \tfrac{\dD}{2}) \tbinom {M}{k}$,
by Lemma \ref{shiftapprox} we have $\tbinom{x-1}{k-1}
= (1 \pm 2\dD) \tbinom {M-1}{k-1}$.
Then $y = Y - E_2 \ge \tbinom{x-1}{k-1} 
-  (1-\tfrac{\dD}{2}) \tbinom{M-1}{k-1} > 600 cY$,
so $y > (1-600c)Y$, giving $E_2 < 600cY$,
and so $|\tbinom {S}{k} \sm \A| \leq 
E_2 + \tfrac{\dD}{2} \tbinom {M-1}{k-1}
< \dD \tbinom {M-1}{k-1}$, as required.

It remains to prove (i) for $k \ge 3$.
We now consider the case $x<k+1$,
so $1 \leq m := |\A| \leq k+1$.
We can assume $m \ge 2$, or (i) holds with $M=k$.
Note that $\I^{(k)}_m = \{ [k+1] \sm \{i\}: i \in [m] \}$
and $\pl(\I^{(k)}_m) = \{ [k+1] \sm \{i,j\}: 
\{i,j\} \cap [m] \ne \es \}$, so by Kruskal--Katona,
$|\pl(\A)| \geq |\pl(\I^{(k)}_{m})| = mk - \tbinom {m}{2}$.
By hypothesis,
$|\pl(\A)| \leq (1 + \tfrac{c}{x}) \tbinom {x}{k-1}$,
where $\tbinom {x}{k-1} = \tfrac{km}{x-k+1}$,
so $m\phi(m) \le \tfrac{c}{x} \tbinom {x}{k-1}$, 
with $\phi$ as in Lemma \ref{phi}.
By concavity, for $t \in [2,k+1-\dD k]$ we have
$t\phi(t) \ge t \phi(2) \tfrac{\dD k}{k-1-\dD k} 
\ge \tfrac{3\dD}{4} \tbinom{x}{k}
\ge \tfrac{3\dD}{4x} \tbinom{x}{k-1}
> \tfrac{c}{x} \tbinom {x}{k-1}$.
Thus $m > k+1-\dD \tbinom{k}{k-1}$, 
i.e.\ (i) holds with $M=k+1$ in this case.

Continuing with the proof of (i),
we can assume $k \ge 3$ and $x \ge k+1$.
Let $M_0 = \lfloor x \rfloor$ and 
$y = \tbinom {M_0}{k} - X$,
so $|\A| = (X+y)+(Y-y)=\tbinom {M_0}{k}+(Y-y)$.
We can assume $y \ge 1$, otherwise (i) holds.
By Kruskal--Katona, Lemma \ref{kkprops}.i and the
Lov\'asz form of Kruskal--Katona we have 
$|\pl A| \ge |\pl(\I^{(k)}_{\tbinom {M_0}{k}+Y-y})|
= \tbinom {M_0}{k-1} + |\pl \I^{(k-1)}_{Y-y}|
\ge f_k(X+y) + f_{k-1}(Y-y)$. By hypothesis,
$|\pl(\A)| \leq (1 + \tfrac{c}{x}) \tbinom {x}{k-1}$,
so Lemma \ref{defectapp2} gives
$y \notin [10^7 c \tbinom{x-2}{k-1}, (1-600c) \tbinom {x-1}{k-1}]$.

Consider the case $y>(1-600c) \tbinom {x-1}{k-1}$.
We have $\tbinom {M_0}{k} = X+y = \tbinom{x}{k}
\pm 600c \tbinom{x-1}{k-1}
= (1 \pm 600c) \tbinom{x}{k}$,
so $\tbinom {M_0-1}{k-1} = (1 \pm 2400c) \tbinom{x-1}{k-1}$
by Lemma \ref{shiftapprox}, so with $M=M_0$ we have
\[ \tbinom {M}{k} = \tbinom{x}{k}
\pm 600c (1 \pm 2400c)^{-1} \tbinom{M-1}{k-1}
= \tbinom{x}{k} \pm 10^4 c \tbinom{M-1}{k-1}.\]
It remains to consider $y < 10^7 c \tbinom{x-2}{k-1}$.
Then $\tbinom {M_0}{k} = X+y = \tbinom{x-1}{k}
\pm 10^7 c \tbinom{x-2}{k-1}
= (1 \pm 10^7 c) \tbinom{x-1}{k}$, as $x \ge k+1$.
By Lemma \ref{shiftapprox} we have
$\tbinom {M_0-1}{k-1} = (1 \pm 10^7 c) \tbinom{x-2}{k-1}$,
so $\tbinom {M_0}{k-1} = \tbinom {M_0}{k} - \tbinom {M_0-1}{k-1}
= \tbinom{x-1}{k} \pm 10^7 c \tbinom{x-2}{k-1}
- (1 \pm 10^7 c) \tbinom{x-2}{k-1}
= \tbinom{x-1}{k-1} \pm 2 \cdot 10^7 c \tbinom{x-2}{k-1}$.
Taking $M = M_0+1$ we have
$\tbinom {M}{k} = \tbinom {M_0}{k} + \tbinom {M_0}{k-1}
= \tbinom{x-1}{k} \pm 10^7 c \tbinom{x-2}{k-1}
+  (1 \pm 2 \cdot 10^7 c) \tbinom{x-1}{k-1}
= \tbinom{x}{k} \pm \frac{\dD}{2} \tbinom{M-1}{k-1}$.	
\end{proof}

We conclude this section
by proving our stability result for Kruskal--Katona.
	 
\begin{proof}[Proof of Theorem \ref{thm: KK-stab}]
Suppose $\dD_0 > 0$,
let $\dD = \min (\tfrac{1}{8}, \tfrac {\dD_0}{3})$
and $c = 10^{-8} \dD$.
Suppose $\A \subset \tbinom {[n]}{k}$ with 
$|\A| = m = \tbinom {x}{k}$ and 
$|\pl(\A)| \leq (1 + \tfrac {c}{x}) \tbinom {x}{k-1}$.
We can assume $m \ge 1$, so $x \ge k$.
By Lemma \ref{lem: KK clique size}.i there is
$M\in \{\lfloor x\rfloor,\lceil x\rceil \}$ with
$\big ||\A| - \tbinom {M}{k} \big | 
\leq \tfrac{\dD}{2} \tbinom {M-1}{k-1}$.
Let $\{(U_i,V_i)\}_{i\in [L]}$ be the sequence of compressions
provided by Theorem \ref{thm: KK compress},
so that $\A_{L} = \I^{(k)}_{m}$ and each
$|\pl(\A_i)| \leq |\pl(\A)| 
\leq ( 1+ \tfrac{c}{x} ) \tbinom {x}{k-1}$.
We show by induction on $i$ with $L \ge i \ge 0$
that there is some $S_i \in \tbinom{[n]}{M}$ with
$|\tbinom {S_i}{k} \sm \A_i| \leq \dD  \tbinom {M-1}{k-1}$.
As $\A_0=\A$, this will prove the theorem,
as we obtain $\big |\tbinom {S_0}{k} \triangle \A \big | 
\leq 3\dD \tbinom {M-1}{k-1} \leq \dD_0 \tbinom {M-1}{k-1}$,
and the `furthermore' statement holds
by Lemma \ref{lem: local stability KK}.

As $\A_{L} = \I^{(k)}_{m}$ the base case 
holds with $S_L = [M]$. For the induction step,
we suppose the required statement for $i$
and prove it for $i-1$. Let 
$\B_j = \tbinom{S_i}{k} \sm \A_j$ for $j \in \{i-1, i\}$. 
The induction hypothesis is
$|\B_i| \leq \dD  \tbinom {M-1}{k-1}$.
Note that if $A \in \A_i \cap \tbinom {S_i}{k}$ 
and $A \notin \A_{i-1} \cap \tbinom {S_{i}}{k}$ 
then $V_i \subset A \subset S_i$ and so 
$|\B_{i-1}| \leq |\B_{i}| + \tbinom {M-|V_i|}{k-|V_i|}$.
In the case that	
$\tbinom {M-|V_i|}{k-|V_i|} < (1-2\dD )\tbinom {M-1}{k-1}$ 
this implies
$|\B_{i-1}| \leq |\B_{i}| + \tbinom {M-|V_i|}{k-|V_i|} 
< (1 - \dD ) \tbinom {M-1}{k-1}$.
As $|\pl(\A_{i-1})| \leq ( 1+ \tfrac{c}{x} ) \tbinom {x}{k-1}$,
Lemma \ref{lem: KK clique size}.ii improves this bound
to $|\B_{i-1}| \le \dD \tbinom {M-1}{k-1}$,
so the induction step holds with $S_{i-1}=S_i$.

It remains to consider the case that
$\tbinom {M-|V_i|}{k-|V_i|} \geq (1-2\dD ) \tbinom {M-1}{k-1}$. 
If $|V_i| \geq 2$ this implies $2\dD (M-1) \geq M-k$.
We may assume that $V_i \subset S_i$ and $U_i \not \subset  S_i $ 
as otherwise $|\B_{i-1}| \leq |\B_i| \leq \dD \tbinom {M-1}{k-1}$. 
Let $T_0 = S_i$ and $T_1 = (S_i\sm V_i )\cup U_i$. We have 
$| \tbinom {S_i \sm V_i}{k}  \sm \A_i| \leq |\tbinom {S_i}{k} \sm \A_i|
 \leq \dD  \tbinom {M-1}{k-1}$ and 
\[ | \tbinom {T_1}{k} \sm \tbinom {S_i \sm V_i}{k} | 
 \leq \tbinom {M}{k} - \tbinom {M - |V_i|}{k} 
= \sum _{i = 1}^{|V_i|} \tbinom {M-i}{k-1} 
    \leq \sum _{i=1}^{|V_i|} 
\Big ( \tfrac {M-k}{M-1} \Big ) ^{i-1} \tbinom {M-1}{k-1}
< \big ( 1 + \tfrac {2\dD }{1-2\dD } \big ) \tbinom {M-1}{k-1},\]
using $\tfrac {M-k}{M-1} \leq 2\dD$ if $|V_i| \geq 2$. 
As $\dD  \leq 1/8$, this gives
\begin{align*}
\Big  |\tbinom {T_0}{k} \sm \A_{i-1} \Big | + 
\Big |\tbinom {T_1}{k} \sm \A_{i-1} \Big | &\leq 
\Big |\tbinom {T_0}{k} \sm \A_{i} \Big | 
+ \Big | \tbinom {S_i \sm V_i}{k}    \sm \A_i \Big | + 
\Big | \tbinom {T_1}{k} \sm \tbinom {S_i \sm V_i}{k} \Big | \\
& < 
\Big ( 1 + 2 \dD + \tfrac {2 \dD }{1 - 2\dD } \Big ) \tbinom {M-1}{k-1} 
< 2(1 - \dD ) \tbinom {M-1}{k-1}.
\end{align*} 
Therefore $|\tbinom {T_j}{k} \sm \A_{i-1}| < (1-\dD ) \tbinom {M-1}{k-1}$
for some $j\in \{0,1\}$. As before,
Lemma \ref{lem: KK clique size}.ii improves this to
$|\tbinom {T_j}{k} \sm \A_{i-1}| \leq \dD \tbinom {M-1}{k-1}$,
so the inductive step is complete with $S_{i-1} := T_j$. 
\end{proof} 

\section{Stability for the cube vertex isoperimetric inequality}
		
In this section we will prove Theorems \ref{thm: ball-sized sets} 
and \ref{thm: VIP stability}. Similarly to our stability result
for Kruskal--Katona, the proofs proceed by analyzing compression operators 
via local stability. We require the existence of a sequence of compressions
that can transform any family $\mc{A}$ into some $\mc{C}$ that is `ball-like',
meaning that $\tbinom {[n]}{\geq k+1} \sub \mc{C} \sub \tbinom {[n]}{\geq k}$
for some $k$. Similarly to before, we require these compressions to maintain 
the size of the family and not increase the size of its vertex boundary.
We also require some further structural properties of the sequence:
we always use compressions $C_{U,V}$ with $|V|=|U|+1$,
and after some initial set of compressions $C_{\es,\{i\}}$ 
the family $\mc{A}_i$ is always an upset, 
i.e.\ if $A \in \mc{A}_i$ and $A \sub B$ then $B \in \mc{A}_i$.
The formal statement is as follows.

\begin{thm}\label{thm: compression theorem}
Given $\A \subset \{0,1\}^n$ there are 
$L_0, L_{1} \in	{\mathbb N}$ with $0\leq L_0 \leq L_1$ 
and pairs of sets $\{(U_i, V_i)\}_{i\in [L_1]}$ so that, 
setting $\A_0 = \A$ and $\A_{i} := C_{U_{i},V_{i}}(\A_{i-1})$ 
for all $i\in [L_1]$, the following hold:
\begin{enumerate}[(i)]
\item $U_i \cap V_i = \es $ for all $i\in [L_1]$;
\item $|V_i| = |U_i| + 1$ for all $i\in [L_1]$;
\item $|U_i| = 0$ for $i\in [L_0]$ and 
$|U_i| \geq 1$ for $i\in [L_0+1,L_1]$;	
\item $\A_i$ is an upset for all $i\in [L_0,L_1]$,				
\item $|\pl_v({\cal A }_{i})| \leq |\pl_v(\A_{i-1})|$ for all $i\in [L_1]$;
\item $\A_{L_1} = \tbinom {[n]}{\geq k+1} \cup \B$ 
where $\B \subset \tbinom {[n]}{k}$ for some $k$.
\end{enumerate}
\end{thm}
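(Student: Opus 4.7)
The plan is to build the compression sequence in two phases, as suggested by conditions (iii) and (iv). In Phase 1 (steps $1,\ldots,L_0$) I apply iterated single-element up-shifts $C_{\es,\{j\}}$ for various $j\in[n]$. A standard pair-by-pair comparison, partitioning $\{0,1\}^n$ into pairs $\{A,A\cup\{j\}\}$, shows that each such operation preserves $|\A|$ and does not increase $|\pl_v(\A)|$. At each step I pick some $j$ for which the up-shift acts nontrivially; the integer monovariant $\sum_{A\in\A_i}|A|\in[0,n|\A|]$ strictly increases at each such step, so the process terminates at some step $L_0$. At termination every $C_{\es,\{j\}}$ fixes $\A_{L_0}$, which is equivalent to $\A_{L_0}$ being an upset.

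In Phase 2 (steps $L_0+1,\ldots,L_1$) I apply compressions $C_{U,V}$ with $|U|=1$, $|V|=2$ chosen to preserve the upset property. The key lemma is: if $\A$ is an upset of minimum level $k=\min\{|A|:A\in\A\}$ which is not yet of the form $\tbinom{[n]}{\ge k+1}\cup\B$ with $\B\sub\tbinom{[n]}{k}$, then there exist $A\in\A\cap\tbinom{[n]}{k}$ and a missing $(k+1)$-set $B\notin\A$ at Hamming distance $3$; setting $U=A\sm B$ and $V=B\sm A$, the compression $C_{U,V}$ is nontrivial, preserves the upset property, and does not increase $|\pl_v(\A)|$.

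The existence of such a pair is proved by choosing a missing $(k+1)$-set $B^*$ minimising $d(B^*):=\min_{A\in\A\cap\tbinom{[n]}{k}}|A\sm B^*|$ over all missing $B^*$, and showing that $d:=d(B^*)=1$. If $d\ge 2$, pick $A^*$ realising $d$, $i^*\in A^*\sm B^*$ and $j\in B^*\sm A^*$, and form $B^{**}=(B^*\sm\{j\})\cup\{i^*\}$. Then $|A^*\sm B^{**}|=d-1$; if $B^{**}\notin\A$ this contradicts minimality of $d$, whereas if $B^{**}\in\A$ then $B^{**}$ has a $k$-subset $A'\in\A$ necessarily containing $i^*$ (else $A'\sub B^*$ would force $B^*\in\A$), giving $|A'\sm B^*|=1<d$, again a contradiction. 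Nontriviality of $C_{U,V}$ is immediate since $C_{U,V}(A)=B\notin\A$. Upset preservation is a case-by-case check: for any $Y\in\A':=C_{U,V}(\A)$ and $Z\supset Y$, either $Z\supseteq V$ (so $Z$ is not a swap candidate and hence $Z\in\A'$ iff $Z\in\A$, which follows from upset of $\A$) or $Z$ can be written as $C_{U,V}(X\cup W)$ for some swap candidate $X\cup W\in\A$, placing $Z$ in $\A'$ whether or not the actual swap happens. The boundary inequality is a standard pair-by-pair verification. Since $\sum|A|$ strictly increases at each step and is bounded above by $n|\A|$, Phase 2 terminates at some $L_1$, at which point the lemma cannot be applied, so $\A_{L_1}$ has the desired form.

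The main obstacle I expect is the upset-preservation check for Phase 2. The minimal Hamming-distance-$3$ choice ($|U|=1$, $|V|=2$) is essential: more aggressive compressions (with larger $|U|$) readily break the upset property, as small examples show. The case analysis above relies on the fact that every superset of a newly swapped-in set either already contains $V$ (and so is unaffected by the compression) or is itself produced by a parallel swap within the upset, so the upset closure is maintained through each Phase 2 step.
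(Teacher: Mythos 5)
Your Phase 1 (iterated $C_{\es,\{j\}}$ until an upset is reached) matches the paper's first bullet. The gap is in Phase 2, where you restrict to compressions with $|U|=1$, $|V|=2$: your ``key lemma'' is false, and consequently Phase 2 can stall on a non-ball-like upset. Concretely, take $n \ge 5$ and
\[
\A = \{\{1,2\}\} \cup \bigl\{A \sub [n]: |A|=3,\ A \cap \{1,2\} \ne \es\bigr\} \cup \tbinom{[n]}{\ge 4}.
\]
This is an upset of minimum level $k=2$ and it is not ball-like (since $\{3,4,5\}\notin\A$), but the only $2$-set in $\A$ is $\{1,2\}$, while every missing $3$-set $B$ is disjoint from $\{1,2\}$, so $|A\sm B|=2$ for every choice. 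One checks directly that every $C_{U,V}$ with $|U|\le 1$, $|V|=|U|+1$ fixes this $\A$: if $V$ meets $\{1,2\}$ then any candidate image $(A\sm U)\cup V$ has size $\ge 3$ and meets $\{1,2\}$, hence lies in $\A$; if $V$ is disjoint from $\{1,2\}$ then the only $2$-set of $\A$ maps to a $3$-set containing one of $1,2$, again in $\A$, and all larger sets map into $\tbinom{[n]}{\ge 4}$.

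The precise hole in your proof of the key lemma is in ``Case 2'': from $B^{**}\in\A$ you infer ``a $k$-subset $A'\in\A$ of $B^{**}$,'' but an upset is closed only \emph{upward}, so a $(k+1)$-set in $\A$ need not contain any $k$-set of $\A$ (in the example, $B^{**}=\{1,4,5\}\in\A$ has no $2$-subset in $\A$). The paper avoids this by allowing $|U|$ to grow: at each Phase 2 step it applies $C_{U,V}$ with $|V|=|U|+1$ \emph{minimal} subject to $C_{U,V}(\A)\ne\A$, and uses the standard fact that when all strictly smaller $(U',V')$-compressions already fix $\A$, the chosen $C_{U,V}$ both preserves the upset property and does not increase $|\pl_v|$. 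In the example the minimal active compression is $C_{\{1,2\},\{3,4,5\}}$, with $|U|=2$. Your closing remark that larger $|U|$ ``readily breaks the upset property'' is the source of the misstep: it is true only without the minimality hypothesis, which the paper's third bullet supplies exactly to prevent this.
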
 
	
It seems that Theorem \ref{thm: compression theorem}
does not appear in the literature, although it is an 
easy extension of known results (similar  
statements are given in \cite{Bol, BL, Daykin, FF}),
so rather than giving a complete proof we will just
briefly indicate why the required sequence of compressions exists: 
\begin{itemize}
\item Given $\A \subset \{0,1\}^n$, 
the family $C_{\es , \{i\}}({\A })$ has the 
same size as $\A$ and has vertex boundary at most that of $\A$.
Repeatedly applying such compressions for different $i\in [n]$, 
we obtain an upset with vertex boundary at most that of $\A$. 
\item Given disjoint sets $U,V \subset [n]$ with $|U| < |V|$, 
the family $C_{U,V}(\A)$ has at least as many elements
of $\tbinom {[n]}{\geq k}$ as 	$\A$. Furthermore, if $\A$ 
is not ball-like then there are disjoint sets $U,V \subset [n]$ 
with $|V| = |U| + 1$ so that $C_{U,V}(\A)$ 
is closer to a ball-like set.
\item If $C_{U',V'}(\A) = \A$ for all 
$U' \subset U$ with $|U'| = |U|-1$ and 
$V' \subset V$ with $|V'| = |V|-1$ then
$|\pl_v({C}_{U,V}(\A))| \leq |\pl_v(\A)|$
and ${C}_{U,V}(\A)$ is closer to a ball-like set.
Furthermore, if $\A$ is an upset then so is $C_{U,V}(\A)$.
\end{itemize}
From the above facts, Theorem \ref{thm: compression theorem}
follows by repeatedly applying compressions $C_{U,V}$ to $\A$ 
where $|V|=|U|+1$ is minimal with $C_{U,V}(\A) \neq \A$.
The proofs of Theorems \ref{thm: ball-sized sets}
and \ref{thm: VIP stability} will analyze the reversal of these compressions. 
In the next two subsections we will prove a 
local stability version of Harper's Theorem
and collect various estimates that boost the accuracy
of approximation by a generalised Hamming ball
for a family with small vertex boundary. 
In the third subsection we prove a stability theorem for 
families of size close to a ball, which implies Theorem 
\ref{thm: ball-sized sets}. The main result in the fourth
subsection allows us to reverse the compressions from 
Theorem \ref{thm: compression theorem} for $i \geq L_0$. 
In particular, this will show that upsets with small vertex boundary
are close to generalised Hamming balls of the first type.
The second type of generalised Hamming ball then appears under reversal 
of the compressions for $i \in [0, L_0-1]$;
the analysis of these steps is given in the fifth subsection,
using the local stability theorem and 
the stability theorem for ball-sized sets.
The final subsection contains the proof of Theorem \ref{thm: VIP stability}.		

\subsection{Local stability for the vertex isoperimetric inequality} 

The main result of this subsection is our local stability result
for perturbations of a generalised Hamming ball. Recall that 
$\J_{m,D,E} = \I_{m-D} \cup (\I_{m+E} \sm \I_{m})$.
For $\F \sub \{0,1\}^n$ and $i \ge 0$ we define the
{\em iterated neighbourhoods} $N^i(\F)$ by $N^0(\F)=\F$
and $N^{i+1}(\F)=N^i(\F) \cup \pl_v(N^i(\F))$.
We start with some identities for the vertex boundary
and iterated neighbourhoods of $\J_{m,D,E}$.

\begin{lem} \label{lem:plJ}
Let $n \ge t \ge k \ge i$, $0 \le D,E \le \tbinom{t-1}{k-1}$
and $m = \tbinom{[n]}{\ge k+1} + \tbinom{t}{k}$.
Then $|N^i(\J_{m,D,E})| + |N^i(\I_m)|
= |N^i(\I_{m-D})| + |N^i(\I_{m+E})|$,
so $|\pl_v(\J_{m,D,E})| + |\pl_v(\I_m)|
= |\pl_v(\I_{m-D})| + |\pl_v(\I_{m+E})|$.
\end{lem}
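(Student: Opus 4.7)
The two statements are linked. Since $N^1(\A)=\A\sqcup\pl_v(\A)$, the boundary identity follows from the $i=1$ case of the iterated neighbourhood identity, after subtracting the trivial cardinality identity $|\J_{m,D,E}|+|\I_m|=|\I_{m-D}|+|\I_{m+E}|$ (both equal $2m-D+E$, from the disjoint decomposition $\J_{m,D,E}=\I_{m-D}\sqcup(\I_{m+E}\sm\I_m)$). So it suffices to prove the iterated neighbourhood identity. My plan is to leverage the standard fact underlying Harper's theorem that $N^i$ maps initial segments of the simplicial order to initial segments; writing $S_a:=N^i(\I_a)$, this yields a chain $S_{m-D}\sub S_m\sub S_{m+E}$.

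Since $\J_{m,D,E}\cup\I_m=\I_{m+E}$, applying $N^i$ and using $N^i(A\cup B)=N^i(A)\cup N^i(B)$ gives $N^i(\J_{m,D,E})\cup S_m=S_{m+E}$. By inclusion-exclusion, the identity reduces to showing $N^i(\J_{m,D,E})\cap S_m=S_{m-D}$. The inclusion $\sups$ is automatic from $\I_{m-D}\sub\J_{m,D,E}\cap\I_m$ and monotonicity of $N^i$. Suppose for contradiction that $x\in N^i(\J_{m,D,E})\cap S_m\sm S_{m-D}$. The Hamming ball $B_i(x)$ thus avoids $\I_{m-D}\sups\binom{[n]}{\ge k+1}$, forcing $|x|+i\le k$. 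It meets $\I_m\sm\I_{m-D}$ at some $A_1\sub[t]$ with $|A_1|=k$ and $t\in A_1$; combining $|A_1\triangle x|\le i$ with $|A_1\triangle x|\ge k-|x|\ge i$ forces $|x|=k-i$, $x\sub A_1$, and so $A_1=x\cup\{t\}\cup S_1$ with $S_1\sub[t-1]\sm x$. Similarly, $B_i(x)$ must meet $\I_{m+E}\sm\I_m$ at some $A_2=x\cup\{t+1\}\cup S_2$, forcing $x\sub[t-1]$.

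Provided $t\ge k+1$ we have $|[t-1]\sm x|=t-1-(k-i)\ge i$, so we may pick $T\sub[t-1]\sm x$ with $|T|=i$ and set $A:=x\cup T\in\binom{[t-1]}{k}\cap B_i(x)$. Since the colex rank of $A$ in $\binom{[n]}{k}$ is at most $\binom{t-1}{k}\le\binom{t}{k}-D$ (using $D\le\binom{t-1}{k-1}$), we get $A\in\I_{m-D}$, contradicting $B_i(x)\cap\I_{m-D}=\es$. The degenerate case $t=k$ forces $D,E\le 1$ and for $D=E=1$ reduces to the $k\leftrightarrow k+1$ symmetry between the removed element $[k]$ and the added element $\{1,\dots,k-1,k+1\}$. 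The main obstacle is the invocation of the preservation of initial segments by $N^i$; the rest is careful bookkeeping of positions in the simplicial order.
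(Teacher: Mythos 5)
Your approach is genuinely different from the paper's. The paper evaluates all four neighbourhood sizes directly in closed form --- writing $T=\tbinom{t}{k}$, each is $\tbinom{n}{\ge k+1-i}$ plus iterated-shadow terms such as $|\pl^i(\I^{(k)}_{T-D})|$ and $|\pl^i(\I^{(k-1)}_E)|$ --- and reads off the identity by cancellation. You instead reduce the equality to the set-theoretic claim $N^i(\J_{m,D,E}) \cap N^i(\I_m) = N^i(\I_{m-D})$, exploiting the fact (true, and deducible from the stability of colex initial segments under shadows) that $N^i$ sends simplicial initial segments to simplicial initial segments, together with $\J_{m,D,E} \cup \I_m = \I_{m+E}$. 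In the main case $t \ge k+1$ your Hamming-ball analysis correctly pins down $|x|=k-i$ and $x \subset [t-1]$, and the $k$-set $A = x \cup T' \in \tbinom{[t-1]}{k}$ yields the contradiction since $\tbinom{t-1}{k}\le\tbinom{t}{k}-D$; that part is clean and complete.

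Your treatment of the degenerate case $t=k$, $D=E=1$ is a genuine gap, and in fact the identity claimed in the lemma fails there, so it cannot be closed by any argument. The $k \lra k{+}1$ symmetry only yields $|N^i(\J_{m,1,1})|=|N^i(\I_m)|$; it does not give $2|N^i(\I_m)|=|N^i(\I_{m-1})|+|N^i(\I_{m+1})|$, and that is false for $i\ge 1$. Concretely, with $n=3$, $t=k=2$, $i=1$ we have $m=2$, $\I_2=\{\{1,2,3\},\{1,2\}\}$, $\J_{2,1,1}=\{\{1,2,3\},\{1,3\}\}$, and a direct count gives $|N^1(\J_{2,1,1})|=|N^1(\I_2)|=6$, $|N^1(\I_1)|=4$, $|N^1(\I_3)|=7$, so $6+6\ne 4+7$. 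In your framework the obstruction is visible: the auxiliary $A$ would have to be a $k$-subset of $[t-1]=[k-1]$, of which there are none, and the ball around $x$ cannot reach $\tbinom{[n]}{\ge k+1}$ either; e.g.\ $x=\{1\}$ lies in $N^1(\J_{2,1,1})\cap N^1(\I_2)$ but not in $N^1(\I_1)$. The paper's own proof shares this defect: its displayed formula for $|N^i(\J_{m,D,E})|$ implicitly uses $\pl^{i-1}(\I^{(k-1)}_E)\subset\pl^i(\I^{(k)}_{T-D})$, which holds precisely when $t\ge k+1$. So your instinct to single out $t=k$ was correct, but the right fix is to add the hypothesis $t\ge k+1$ (or $DE=0$) to the lemma rather than to try to prove the degenerate case.
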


\begin{proof} 
The statement on vertex boundaries 
is equivalent to that on neighbourhoods with $i=1$.
Writing $T=\tbinom{t}{k}$, we have
\begin{align*}
|N^i(\I_{m-D})| & = \tbinom{n}{\ge k+1-i} 
+ |\pl^i(\I^{(k)}_{T-D})|, \\
|N^i(\I_{m+E})| & = \tbinom{n}{\ge k+1-i} 
+ \tbinom{t}{k-i} + |\pl^i(\I^{(k-1)}_E)|, \\
|N^i(\I_m)| & = \tbinom{n}{\ge k+1-i} + \tbinom{t}{k-i}, 
\text{ and } \\
|N^i(\J_{m,D,E})| & = \tbinom{n}{\ge k+1-i} 
+ |\pl^i(\I^{(k)}_{T-D})| + |\pl^i(\I^{(k-1)}_E)|.
\end{align*}
The lemma follows.
\end{proof} 

Now we prove our local stability result.
The main task of the proof is to establish a submodularity property 
for (iterated) neighbourhoods that may have independent interest.

\begin{lem} \label{lem: local stability}
Suppose $\mc{A}, \mc{G} \sub \{0,1\}^n$.
Let $\A^- = \A \cap \G$ and $\A^+ = \A \cup \G$.
For any $i \ge 0$ we have
$|N^i(\A)| + |N^i(\G)|
\geq |N^i(\A^-)| + |N^i(\A^+)|$,
so $|\pl_v(\A)| + |\pl_v(\G)|
\geq |\pl_v(\A^-)| + |\pl_v(\A^+)|$.

Suppose also $\G$ is a generalised Hamming ball,
namely $\G = \tbinom {[n]}{\geq \ell +1} \cup \tbinom {[t]}{\ell }$ 
with $\ell \leq t \leq n$, or 
$\G = \tbinom {[n]}{\geq \ell +1} 
\cup \tbinom {[t-1]}{\ell } \cup \tbinom {[t-1]}{\ell -1}$ 
with $\ell +1 \leq t \leq n-1$. 
Write $m = \tbinom{n}{\geq \ell +1} + \tbinom {t}{\ell }$,
$|\A^-| = |\G| - D$, $|\A^+| = |\G| + E$ and suppose
$D,E \leq \tbinom {t-1}{\ell -1}$.
Then $|N^i(\A)| \geq |N^i(\J_{m,D,E})|$,
so $|\pl_v(\A)| \geq |\pl_v(\J_{m,D,E})|$.
\end{lem}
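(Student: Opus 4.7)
The plan has three ingredients: a submodularity bound for $N^i$ via inclusion–exclusion, the iterated form of Harper's theorem applied to $\A^-$ and $\A^+$, and the structural identity $|N^i(\G)| = |N^i(\I_m)|$ for generalised Hamming balls.

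First, for the submodularity, observe that $N^i(\mc{X})$ is just the set of vertices of $Q_n$ within graph distance $i$ of $\mc{X}$, so $N^i(\mc{X} \cup \mc{Y}) = N^i(\mc{X}) \cup N^i(\mc{Y})$ exactly and $N^i(\mc{X} \cap \mc{Y}) \sub N^i(\mc{X}) \cap N^i(\mc{Y})$. Inclusion–exclusion applied to $N^i(\A)$ and $N^i(\G)$ then yields
\[ |N^i(\A)| + |N^i(\G)| = |N^i(\A) \cup N^i(\G)| + |N^i(\A) \cap N^i(\G)| \ge |N^i(\A^+)| + |N^i(\A^-)|, \]
which is the first part of the lemma. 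The $\pl_v$-statement follows at $i=1$ by subtracting the identity $|\A|+|\G|=|\A^+|+|\A^-|$.

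For the stronger bound, I use the iterated Harper inequality $|N^i(\mc{B})| \ge |N^i(\I_{|\mc{B}|})|$ for arbitrary $\mc{B}$. This follows from Harper's theorem by iteration, using that initial segments in the simplicial order are \emph{nested}: for every $m'$, $N(\I_{m'})$ is again an initial segment, so one can apply Harper step by step. Applying this to $\A^-$ and $\A^+$ (of sizes $m-D$ and $m+E$), combining with the submodularity bound, and substituting the identity $|N^i(\G)| = |N^i(\I_m)|$ (addressed below), we obtain
\[ |N^i(\A)| \ge |N^i(\I_{m-D})| + |N^i(\I_{m+E})| - |N^i(\I_m)| = |N^i(\J_{m,D,E})|, \]
where the last equality is Lemma \ref{lem:plJ}. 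The $\pl_v$-version follows by the same subtraction, noting $|\A| = m+E-D = |\J_{m,D,E}|$.

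The main obstacle, and the only substantive computation, is verifying $|N^i(\G)| = |N^i(\I_m)|$. For type-1 balls this is immediate since $\G_1 = \tbinom{[n]}{\ge \ell+1} \cup \tbinom{[t]}{\ell} = \I_m$. For type-2 balls $\G_2 = \tbinom{[n]}{\ge \ell+1} \cup \tbinom{[t-1]}{\ell} \cup \tbinom{[t-1]}{\ell-1}$, a direct case analysis of the vertex boundary shows
\[ N(\G_2) = \tbinom{[n]}{\ge \ell} \cup \tbinom{[t-1]}{\ell-1} \cup \tbinom{[t-1]}{\ell-2}, \]
which is again a type-2 ball with $\ell$ decreased by $1$. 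Moreover $|N(\G_2)| = |\G_2| + |\pl_v(\G_2)| = m + |\pl_v(\G_1)| = |N(\I_m)|$, using the equality $|\pl_v(\G_1)| = |\pl_v(\G_2)|$ established in the introduction. Induction on $i$ now gives the required identity, with degenerate parameter ranges (where $\ell-i$ becomes too small) handled trivially since $N^i$ then saturates the cube and both sides equal $2^n$.
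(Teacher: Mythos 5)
Your proof is correct, and it follows the same overall strategy as the paper (submodularity of $|N^i(\cdot)|$, then Harper applied to $\A^{\pm}$, then the identity from Lemma~\ref{lem:plJ}), but the first step is handled by a genuinely simpler argument. For the submodularity inequality, the paper runs a somewhat involved double-counting built around the auxiliary set $\E = N^i(\A^+) \sm (N^i(\A) \cup \G)$ and the decomposition of $N^i(\A)$ by its intersection with $\G$; you instead observe that $N^i$ is the metric ball operator of radius $i$, so $N^i(\A \cup \G) = N^i(\A) \cup N^i(\G)$ and $N^i(\A \cap \G) \sub N^i(\A) \cap N^i(\G)$, and the inequality drops out of one application of inclusion--exclusion. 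This is cleaner and more transparent, and it makes clear that no structure of $\G$ is used in the first half. For the second half you are also more explicit than the paper: you verify $|N^i(\G)| = |N^i(\I_m)|$ for a type-2 generalised Hamming ball by checking directly that $N(\G_2)$ is again a type-2 ball with $\ell$ decremented, combining this with $|\pl_v(\G_1)| = |\pl_v(\G_2)|$ from the introduction, and then inducting on $i$ (with saturation handling the degenerate range). The paper instead leaves this identity implicit when it invokes Harper and Lemma~\ref{lem:plJ}, and it really only displays the $\pl_v$ ($i=1$) case even though the statement asserts the result for all $i$. So your write-up is both a little simpler in step one and a little more complete in step two; the core ideas coincide with the paper's.
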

		
\begin{proof}
As $|\A| + |\G| = |\A^-| + |\A^+|$,
the statement on vertex boundaries 
is equivalent to that on neighbourhoods with $i=1$.
Let $\E = N^i(\A^+) \sm (N^i(\A) \cup \G)$.
Then $|N^i(\A^+) \sm \G| \le |N^i(\A) \sm \G| + |\E|$, so
\begin{equation}\label{equation: upper boundary}
|N^i(\A) \sm \G| \ge |N^i(\A^+) \sm \G| - |\E|
= |N^i(\A^+)| - |\G| - |\E|,
\end{equation} 
as $\G \subset \A^+$.
Next we observe that $\E \subset N^i(\G) \sm \G$ 
(as $N^i(\A^+) = N^i(\A) \cup N^i(\G)$)
and $N^i(\A^-) \cap \E = \es$
(as $N^i(\A^-) \sub N^i(\A)$), so
$|N^i(\A^-) \cap (N^i(\G) \sm \G)| \le |N^i(\G)| - |\G| - |\E|$.
We deduce 
\begin{equation} \label{equation: lower boundary}
|N^i(\A) \cap \G | \ge |N^i(\A^-) \cap \G |
= |N^i(\A^-)| - |N^i(\A^-) \cap (N^i(\G) \sm \G)|
 \geq |N^i(\A^-)| + |\G| + |\E| - |N^i(\G)|.
\end{equation}
Combining \eqref{equation: upper boundary} with 
\eqref{equation: lower boundary} gives
\begin{equation*}
|N^i(\A)| = |N^i(\A) \cap \G | + |N^i(\A) \sm \G|
\geq |N^i(\A^-)| + |N^i(\A^+)| - |N^i(\G)|,
\end{equation*}
which is the first statement of the lemma. Now
\begin{equation*}
|\pl _v(\A)| \geq |\pl_v(\I_{m-D})| + |\pl_v(\I_{m+E})|  
- |\pl_v(\G)| = |\pl_v(\J_{m,D,E})|,
\end{equation*}
by Harper's Theorem applied to $\A^+$ and $\A^-$ 
and then Lemma \ref{lem:plJ}.
\end{proof}	

We conclude this subsection by showing how the local stability 
obtained in the previous lemma allows us to boost the accuracy
of approximation by a generalised Hamming ball
for a family with small vertex boundary. 

\begin{lem} \label{localstabcor}
Let $\dD \in (0,1)$, $c = 10^{-9}\dD$
and $\A \sub \{0,1\}^n$ with
$|\A| = \tbinom {n}{\geq k+1} + \tbinom {x}{k}$,
and $\tbinom {x}{k} = \tbinom {|S|}{k} 
\pm \tfrac{\dD }{5}\tbinom {|S|-3}{k-2}$, 
where $2 \leq k \leq |S| \leq n-1$. 
Suppose $|\pl_v({\A})| \leq \Lov {\A} 
+ \tfrac {ck(x-k)}{x^3} \tbinom {x}{k-1}$ and 
$|\A \sm \G| \leq \tbinom {|S|-1}{k-1} - \dD \tbinom {|S|-3}{k-2}$ 
for some generalised Hamming ball $\G$ 
with $|\G| = \tbinom {n}{\geq k+1} + \tbinom {|S|}{k}$.
Then $|\A \triangle \G| \leq \dD \tbinom {|S|-3}{k-2}$.
\end{lem}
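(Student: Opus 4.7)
The plan is to apply Lemma~\ref{lem: local stability} to the pair $(\A, \G)$, yielding $|\pl_v(\A)| \ge |\pl_v(\J_{|\G|, D, E})|$ with $D = |\G \sm \A|$ and $E = |\A \sm \G|$. Writing $t = |S|$, the level-$k$ part of $\J_{|\G|, D, E}$ equals $\J^{(k)}_{t, \tbinom{t-1}{k-1} - D, E}$ (from~(\ref{Jk}), as $\tbinom{t-1}{k} + (\tbinom{t-1}{k-1} - D) = \tbinom{t}{k} - D$). By Lemma~\ref{kkprops}.iii its shadow decomposes as $|\pl \I^{(k)}_{\tbinom{t}{k} - D}| + |\pl \I^{(k-1)}_E|$. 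Applying the Lov\'asz form of Kruskal--Katona to each piece (valid when $E \ge 1$) and simplifying using $E - D = \tbinom{x}{k} - \tbinom{t}{k}$ transforms the hypothesis $|\pl_v(\A)| \le \Lov{\A} + \tfrac{ck(x-k)}{x^3}\tbinom{x}{k-1}$ into the key inequality
\[
f_k(\tbinom{t}{k} - D) + f_{k-1}(E) \le \tbinom{x}{k-1} + \tfrac{ck(x-k)}{x^3}\tbinom{x}{k-1}.
\]

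This has precisely the form needed for Lemma~\ref{defectapp2}: setting $y = \tbinom{x-1}{k-1} - E$, $X = \tbinom{x-1}{k}$, $Y = \tbinom{x-1}{k-1}$, and $c_{\text{lem}} = \tfrac{ck(x-k)}{x^2}$, one verifies $X + y = \tbinom{t}{k} - D$ (using the relation $\tbinom{x}{k} = \tbinom{t}{k} + (E - D)$) and $Y - y = E$, so the furthermore statement yields the binary alternative: either $y < 10^7 c_{\text{lem}} \tbinom{x-2}{k-1}$, forcing $E > Y - 10^7 c_{\text{lem}} \tbinom{x-2}{k-1}$, or $y > (1 - 600 c_{\text{lem}}) Y$, forcing $E < 600 c_{\text{lem}} Y$. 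Using $\tfrac{k(x-k)}{x^2} \tbinom{x-1}{k-1} \le 2 \tbinom{x-3}{k-2}$ from Section~2, both extremal quantities are $O(c \tbinom{x-3}{k-2})$, which is negligible relative to $\delta \tbinom{|S|-3}{k-2}$ because $c = 10^{-9}\delta$.

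In the first alternative, Lemma~\ref{shiftapprox} gives $\tbinom{x-1}{k-1} \ge \tbinom{t-1}{k-1} - \tfrac{\delta k}{5t}\tbinom{t-3}{k-2}$, so $E > \tbinom{t-1}{k-1} - O(\delta)\tbinom{t-3}{k-2}$ (with small absolute constant), contradicting the hypothesis $E \le \tbinom{|S|-1}{k-1} - \delta \tbinom{|S|-3}{k-2}$ once one checks that the combined coefficient $\tfrac{k}{5t} + O(c/\delta)$ is strictly less than $1$. In the second alternative, $E = O(c \tbinom{x-3}{k-2})$, and combining with $|D - E| = |\tbinom{t}{k} - \tbinom{x}{k}| \le \tfrac{\delta}{5}\tbinom{|S|-3}{k-2}$ gives $|\A \triangle \G| = D + E \le 2E + \tfrac{\delta}{5}\tbinom{|S|-3}{k-2} \le \delta \tbinom{|S|-3}{k-2}$, as required. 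The edge case $E = 0$ (where Lov\'asz overestimates the shadow of $\I^{(k-1)}_0$) is handled directly: then $D = \tbinom{t}{k} - \tbinom{x}{k} \le \tfrac{\delta}{5}\tbinom{|S|-3}{k-2}$, so $|\A \triangle \G| = D < \delta \tbinom{|S|-3}{k-2}$.

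The main obstacle I anticipate is tracking the small constant factors to ensure that, in the first alternative, the overall correction from $\tbinom{x}{k} \ne \tbinom{t}{k}$ (controlled by Lemma~\ref{shiftapprox}) together with the defect quantity from Lemma~\ref{defectapp2} remains strictly below $\delta \tbinom{|S|-3}{k-2}$, so that the contradiction with the hypothesis on $|\A \sm \G|$ genuinely holds in all parameter regimes $k \le |S| \le n-1$. A minor additional point is that the argument uses only $|\G|$ and not the detailed structure of $\G$, so it applies uniformly to both types of generalised Hamming ball (each of which satisfies $|\pl_v(\G)| = \Lov{|\G|}$).
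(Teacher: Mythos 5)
Your proof is correct and follows essentially the same route as the paper. Both begin by applying Lemma~\ref{lem: local stability} to $(\A,\G)$, and both then reduce to a statement about the shadow of the resulting initial-segment union $\J^{(k)}$, which is ultimately handled by the concavity defect estimate of Lemma~\ref{defectapp2}. The paper packages this second step as a single invocation of Lemma~\ref{lem: KK clique size}.ii applied to $\J^{(k)}_{|S|,E',E}$ with the rescaled parameter $\tfrac{ck(x-k)}{x^2}$ (the factor $\tfrac{k(x-k)}{x^2}$ doing the job of converting the $\tbinom{M-1}{k-1}$ scale there into $\tbinom{|S|-3}{k-2}$), whereas you unpack the same mechanism directly via the Lov\'asz form and Lemma~\ref{defectapp2}'s ``furthermore'' clause; the underlying computation is the same. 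The one point worth flagging is that Lemma~\ref{defectapp2} assumes $k \ge 3$, and its ``furthermore'' additionally assumes $x \ge k+1$, so the degenerate cases $k=2$ and $k \le x < k+1$ are not covered by your direct invocation; the paper inherits their treatment from the separate $k=2$ and small-$x$ branches inside Lemma~\ref{lem: KK clique size}. These cases are easy (for $k=2$ the hypothesis forces $|\A|-\tbinom{n}{\ge 3}=\tbinom{|S|}{2}$ exactly and the conclusion is $\A=\G$), but they should be mentioned for completeness.
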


\begin{proof}
We apply Lemma \ref{lem: local stability} 
to $\A$ and $\G$, which gives
$|\pl_v(\A)| \geq |\pl_v(\J_{m,D,E})|$, where
$m = \tbinom {n}{\geq k+1}+\tbinom {|S|}{k}$,
$D = |\G \sm \A|$ and $E = |\A \sm \G|$. 
Note that $E \le \tbinom {|S|-1}{k-1} 
- \dD \tbinom {|S|-3}{k-2}$
and $|\pl_v(\J_{m,D,E})| 
= \tbinom {n}{k} - \tbinom {x}{k} 
+ |\pl(\J^{(k)}_{|S|, E',E})|$,
where $E' = \tbinom {|S|-1}{k-1} - D$. 
Our assumed upper bound on $|\pl_v(\A)|$ 
implies $|\pl(\J^{(k)}_{|S|, E',E})| \leq 
\big ( 1 + \tfrac {ck(x-k)}{x^3} \big ) \tbinom {x}{k-1}$. 
By Lemma \ref{lem: KK clique size}.ii, applied with 
$\tfrac {c k(x-k)}{x^2}$ in place of $c$,
we obtain $D = |\tbinom{S}{k} \sm \J^{(k)}_{|S|, E',E}|
\le \tfrac{\dD}{5} \tbinom {|S|-3}{k-2}$,
and so $|\A \triangle \G| \leq \dD \tbinom {|S|-3}{k-2}$.
\end{proof}

\subsection{Boosting approximations}

In this subsection we collect several further lemmas
for boosting approximations under the assumption of
small vertex boundary. We start by quantifying the 
defect in \eqref{equation: coarse VIP bound}
for families that are somewhat close 
to a generalised Hamming ball.
		
\begin{lem} \label{cleandefect}
Let $n \geq t \geq \ell \geq 2$, and let 
$\G$ be a generalised Hamming 
ball of size $m = \tbinom {n}{\geq \ell+1} + \tbinom {t}{\ell}$. 
Suppose $\A \subset \{0,1\}^n$ 
and $|\A| = \tbinom {n}{\geq \ell+1} + 
\tbinom {t-1}{\ell} + E_1 + E_2$ with $|\A\sm \G| = E_2$, 
where $1 \leq E_1, E_2 \leq \tbinom {t-1}{\ell-1}$. 
Set $E_{min} := \max ( 0 , E_1 + E_2 - \tbinom {t-1}{\ell-1})$ 
and $E_{max} := \min ( E_1 + E_2 , \tbinom {t-1}{\ell-1})$. 
Then $$|\pl_v(\A)| - \Lov {\A } 
\geq \Phi := \big ( f_{\ell-1}(E_1) + f_{\ell-1}(E_2) \big ) 
- \big ( f_{\ell-1}( E_{min} ) + f_{\ell-1}( E_{max} ) \big ).$$
\end{lem}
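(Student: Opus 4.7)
My strategy is to apply the local stability result Lemma \ref{lem: local stability} to $\A$ and $\G$, and then identify $\Phi$ as a Kruskal--Katona-type shadow defect. From $|\A \sm \G| = E_2$ and the given sizes one reads off $E := |\A \sm \G| = E_2$ and $D := |\G \sm \A| = \binom{t-1}{\ell-1} - E_1$; both lie in $[0,\binom{t-1}{\ell-1}]$, so Lemma \ref{lem: local stability} applies and gives $|\pl_v(\A)| \geq |\pl_v(\J_{m,D,E})|$.

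Next I would unpack $\J_{m,D,E}$ explicitly. As $m-D = \binom{n}{\geq \ell+1} + \binom{t-1}{\ell} + E_1$ and $E_2$ fits within the available $\ell$-level budget $\binom{n}{\ell} - \binom{t}{\ell}$, this family coincides with the upset $\binom{[n]}{\geq \ell+1} \cup \J^{(\ell)}_{t,E_1,E_2}$, so its vertex boundary splits as the disjoint union $(\binom{[n]}{\ell} \sm \J^{(\ell)}_{t,E_1,E_2}) \sqcup \pl(\J^{(\ell)}_{t,E_1,E_2})$. Parts (i) and (iii) of Lemma \ref{kkprops} then give $|\pl(\J^{(\ell)}_{t,E_1,E_2})| = \binom{t-1}{\ell-1} + f_{\ell-1}(E_1) + f_{\ell-1}(E_2)$. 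Subtracting $\Lov{\A} = \binom{n}{\ell} - \binom{x}{\ell} + \binom{x}{\ell-1}$ with $\binom{x}{\ell} = \binom{t-1}{\ell} + E_1 + E_2$ reduces the lemma to the inequality
\[ \binom{t-1}{\ell-1} + f_{\ell-1}(E_{min}) + f_{\ell-1}(E_{max}) \geq f_\ell\big(\tbinom{t-1}{\ell} + E_1 + E_2\big). \]

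I would then verify this by splitting on the sign of $E_1+E_2 - \binom{t-1}{\ell-1}$. If $E_1+E_2 \leq \binom{t-1}{\ell-1}$ then $E_{min}=0$, and dropping the harmless $f_{\ell-1}(0)=1$ one recognises the left hand side as $|\pl \I^{(\ell)}_{\binom{t-1}{\ell}+E_1+E_2}|$ via Lemma \ref{kkprops}.i at $s=t$. Otherwise $E_{max} = \binom{t-1}{\ell-1}$ and $E_{min} = E_1+E_2-\binom{t-1}{\ell-1}$; here the identity $f_{\ell-1}(\binom{t-1}{\ell-1}) = \binom{t-1}{\ell-2}$ combined with Lemma \ref{kkprops}.i at $s=t+1$ recasts the left hand side as $|\pl \I^{(\ell)}_{\binom{t}{\ell}+E_{min}}|$. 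In either case the Lov\'asz form of Kruskal--Katona (applied to this initial colex segment, which has the same size as $\binom{[n]}{\geq \ell+1}$-complement part of $\A$) delivers the required lower bound $f_\ell(\binom{t-1}{\ell}+E_1+E_2)$. The only non-routine step is the initial bookkeeping identifying $\J_{m,D,E}$ with $\binom{[n]}{\geq \ell+1} \cup \J^{(\ell)}_{t,E_1,E_2}$; once this is in place, the remainder simply assembles results already in the paper.
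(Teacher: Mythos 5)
The first half of your argument is sound and matches the paper: you apply Lemma \ref{lem: local stability} with $D = \binom{t-1}{\ell-1}-E_1$ and $E = E_2$, use Lemma \ref{kkprops} to expand $|\pl_v(\J_{m,D,E_2})|$, and correctly reduce the claim to
\[ \tbinom{t-1}{\ell-1} + f_{\ell-1}(E_{min}) + f_{\ell-1}(E_{max}) \geq f_\ell\bigl(\tbinom{t-1}{\ell} + E_1 + E_2\bigr). \]
(The equalities you write at this stage, e.g.\ $|\pl(\J^{(\ell)}_{t,E_1,E_2})| = \binom{t-1}{\ell-1} + f_{\ell-1}(E_1) + f_{\ell-1}(E_2)$, should really be ``$\geq$'', since $|\pl\I^{(\ell-1)}_{E_j}| \geq f_{\ell-1}(E_j)$ with equality only at clique sizes; but as these are in the direction you need, the reduction survives.)

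The gap is in the final step. In the case $E_{min}=0$ you claim $\binom{t-1}{\ell-1}+f_{\ell-1}(E_{max})$ ``is'' $|\pl\I^{(\ell)}_{\binom{t-1}{\ell}+E_{max}}|$ and then apply the Lov\'asz bound to that shadow. This identification is false: by Lemma \ref{kkprops}.i the exact identity is $|\pl\I^{(\ell)}_{\binom{t-1}{\ell}+E_{max}}| = \binom{t-1}{\ell-1} + |\pl\I^{(\ell-1)}_{E_{max}}|$, and $|\pl\I^{(\ell-1)}_{E_{max}}| \geq f_{\ell-1}(E_{max})$, so the inequality runs the \emph{wrong} way for you. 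Concretely, take $\ell=3$, $t=5$, $E_{max}=4$: then $\binom{t-1}{\ell-1}+f_{\ell-1}(E_{max}) = 6 + f_2(4) \approx 9.37$, whereas $|\pl\I^{(3)}_{8}| = 10$, so the two quantities differ. Knowing $|\pl\I^{(\ell)}_{\cdots}| \geq f_\ell(\binom{t-1}{\ell}+E_{max})$ and $|\pl\I^{(\ell)}_{\cdots}| \geq \binom{t-1}{\ell-1}+f_{\ell-1}(E_{max})$ gives no comparison between the two lower bounds; your chain of reasoning therefore does not close. The same problem recurs in the $E_{max}=\binom{t-1}{\ell-1}$ case.

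What the target inequality actually expresses is the concavity of $f_\ell$, not Kruskal--Katona. Set $x$ by $\binom{x}{\ell} = \binom{t-1}{\ell}+E_{max}$ and observe $E_{max} \leq \binom{x-1}{\ell-1}$ (since $x \leq t$). Then Lemma \ref{concavecor} with $z = E_{max}$ gives $f_\ell(\binom{t-1}{\ell}) + f_{\ell-1}(E_{max}) \geq \binom{x}{\ell-1} = f_\ell(\binom{t-1}{\ell}+E_{max})$, which is exactly the needed bound after noting $f_\ell(\binom{t-1}{\ell})=\binom{t-1}{\ell-1}$. The $E_{max}=\binom{t-1}{\ell-1}$ case follows by the same concavity argument with a shift by one level, as the paper does. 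So your overall architecture is right, but the last step needs Lemma \ref{concavecor} rather than a purported Kruskal--Katona identity.
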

		
\begin{proof}
Note that $E_1 + E_2 = E_{min} + E_{max}$ 
and $E_{min} \leq E_1, E_2 \leq E_{max}$.
Lemma \ref{lem: local stability} gives   
$|\pl_v(\A)| \geq |\pl_v(\J_{m,D,E_2})|$ with 
$D = \tbinom {t-1}{\ell-1} - E_1$. 
Writing $m' = \tbinom {n}{\geq \ell+1} 
+ \tbinom {t-1}{\ell}$, we have 
$\J_{m,D,E_2} = \I_{m' +E_1} \cup 
\big ( \I_{m + E_2} \sm \I_m \big )$ and  
\begin{align*}
|\pl_v(\J_{m, D,E_2})| 
& = |\pl_v(\I_{m'})| + |\pl(\I^{(\ell-1)}_{E_1})| 
+ |\pl(\I^{(\ell-1)}_{E_2})| - \big ( E_1 + E_2 \big )\\ 
& \geq 	|\pl_v(\I_{m'})| + f_{\ell-1}(E_1) 
+ f_{\ell-1}(E_2) - \big ( E_{1} + E_{2} \big )\\
& =  |\pl_v(\I_{m'})| + f_{\ell-1}(E_{min}) 
+ f_{\ell-1}(E_{max}) + \Phi - \big ( E_{min} + E_{max} \big ),
\end{align*}
where the inequality holds by the Lov\'asz form of Kruskal--Katona
applied to $\I^{(\ell-1)}_{E_1}$ and $\I^{(\ell-1)}_{E_2}$. 
As $|\pl_v(\I_{m'})| = 	\tbinom {n}{\ell} - \tbinom {t-1}{\ell} + 
f_{\ell}(\tbinom {t-1}{\ell})$, it remains to show
\[ \Psi := \tbinom {n}{\ell} + f_{\ell}(\tbinom {t-1}{\ell}) 
+ f_{\ell-1}(E_{min}) + f_{\ell-1}(E_{max})  
- \big (\tbinom {t-1}{\ell} +  	E_{min} + E_{max} \big ) 
\geq \Lov {\A }.\] 
We prove this inequality according to the cases
$E_{min} = 0$ or $E_{max} = \tbinom {t-1}{\ell-1}$
(one of which must hold).

First consider $E_{min} = 0$.
Then $E_{max}=E_1+E_2 \le \tbinom {t-1}{\ell-1}$.
Define $x \ge \ell$ by 
$\tbinom {x}{\ell} = |\A| - \tbinom{n}{\ge \ell+1}
= \tbinom {t-1}{\ell} + E_{max}$ and note that $x \le n$.
By Lemma \ref{concavecor} we have 
$\Psi \geq \tbinom {n}{\ell} 
+ f_{\ell}( \tbinom {x}{\ell} - E_{max} ) + f_{\ell-1}(E_{max}) 
- \tbinom {x}{\ell}
\ge  \tbinom{n}{\ell} - \tbinom {x}{\ell} + \tbinom {x}{\ell-1}
= \Lov {\A }$, as required.

It remains to consider $E_{max} = \tbinom {t-1}{\ell-1}$.
Note that $f_\ell(\tbinom {t-1}{\ell}) + f_{\ell-1}(E_{max})
= \tbinom{t-1}{\ell-1} + \tbinom {t-1}{\ell-2}
= f_\ell(\tbinom {t}{\ell})$, so $\Psi = \tbinom {n}{\ell} + 
f_\ell(\tbinom {t}{\ell}) + f_{\ell-1}(E_{min}) - 
\big ( \tbinom {t}{\ell} + E_{min} \big )$. 
If $t = n$ then $|\A| = \tbinom {n}{\geq \ell} + E_{min}$ and
$\Psi = \tbinom {n}{\ell-1} + f_{\ell-1}(E_{min}) - E_{min} =\Lov \A$. 
If $t < n$ then $|\A| = \tbinom {n}{\geq \ell+1} + \tbinom {x}{\ell}$ 
with $\tbinom {x}{\ell} = \tbinom {t}{\ell} +E_{min} < \tbinom {n}{\ell}$. 
Similarly to the previous case, by Lemma \ref{concavecor} we have 
$\Psi = \tbinom {n}{\ell} - \tbinom {x}{\ell}
+ f_\ell(\tbinom {x}{\ell} - E_{min}) + f_{\ell-1}(E_{min})
\ge \tbinom {n}{\ell} - \tbinom {x}{\ell} +  \tbinom {x}{\ell-1}
= \Lov {\A }$.
\end{proof}

Our next lemma boosts the accuracy of approximation in the `ball part' 
of a family which is not (necessarily) ball-sized.

\begin{lem} \label{vbcorrection1}
Let $\dD \in (0,1)$, $c=10^{-3}\dD$ and $\mc{A} \sub \{0,1\}^n$.
Suppose $|\A| = \tbinom {n}{\geq k+1} + \tbinom {x}{k}$,
with $\tbinom {x}{k} = \tbinom {s}{k} 
\pm \tfrac{\dD }{5}\tbinom {s-3}{k-2}$, where 
$2 \leq k \leq s \leq n-1$ and $s \in \mb{N}$. 
Suppose also $|\A \sm \tbinom {[n]}{\geq k+1}| 
< \tbinom {n-1}{k} - \dD \tbinom {s-3}{k-2}$
and $|\pl_v({\A})| \leq \Lov {\A} 
+ \tfrac {ck(x-k)}{x^3} \tbinom {x}{k-1}$.
Then $|\tbinom {[n]}{\geq k+1} \sm \A| 
\leq \dD \tbinom {s-3}{k-2}$. 
\end{lem}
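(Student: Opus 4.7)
The plan is to apply Lemma \ref{cleandefect} with the (degenerate) generalised Hamming ball $\G_0 := \tbinom{[n]}{\ge k+1}$, viewed as $\tbinom{[n]}{\ge \ell+1} \cup \tbinom{[t]}{\ell}$ with $\ell = k+1$ and $t = n$. Writing $D := |\tbinom{[n]}{\ge k+1} \sm \A|$ and $E := |\A \sm \tbinom{[n]}{\ge k+1}|$, so that $E - D = \tbinom{x}{k}$, I set $E_2 := E$ and $E_1 := \tbinom{n-1}{k} - D$ in the lemma's notation; these lie in $[1, \tbinom{n-1}{k}]$ and satisfy $E_1 + E_2 = \tbinom{n-1}{k} + \tbinom{x}{k} \ge \tbinom{t-1}{\ell-1}$, giving $E_{min} = \tbinom{x}{k}$ and $E_{max} = \tbinom{n-1}{k}$. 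Combined with the hypothesis on $|\pl_v(\A)|$, Lemma \ref{cleandefect} then yields
\[
 f_k(E_1) + f_k(E_2) - \tbinom{x}{k-1} - \tbinom{n-1}{k-1}
 \le \tfrac{ck(x-k)}{x^3}\tbinom{x}{k-1}.
\]

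A preliminary observation is that the extreme case $s = n-1$ is excluded by the hypotheses: combining $\tbinom{x}{k} \ge \tbinom{s}{k} - \tfrac{\dD}{5}\tbinom{s-3}{k-2}$ with $\tbinom{x}{k} \le E < \tbinom{n-1}{k} - \dD\tbinom{s-3}{k-2}$ would force $\dD \le \dD/5$. Hence $s \le n-2$, and the bound $\tbinom{s-3}{k-2} \le \tbinom{n-5}{k-2} \le \tbinom{n-2}{k-1}$ yields the inequality $\tbinom{x}{k} \le \tbinom{n-2}{k} + \tfrac{\dD}{5}\tbinom{n-2}{k-1} < \tbinom{n-2}{k} + \tfrac{1}{2}\tbinom{n-2}{k-1}$. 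I can therefore apply the ``furthermore'' clause of Lemma \ref{defectapps}.iii (with $\ell = k$, with $n$ of the lemma replaced by $n-1$, and with $c'$ matching our $c$) to $y := \min(E_1, E_2)$ and $z := \max(E_1, E_2)$, which satisfy $y + z = \tbinom{x}{k} + \tbinom{n-1}{k}$, obtaining
\[
 y \le \tbinom{x}{k} + 250c\tbinom{x-3}{k-2}.
\]

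Finally I identify $y$ with $E_2$. If instead $y = E_1$, then $E_2 = \tbinom{n-1}{k} + \tbinom{x}{k} - E_1 \ge \tbinom{n-1}{k} - 250c\tbinom{x-3}{k-2}$. Using $c = 10^{-3}\dD$ and the comparison $\tbinom{x-3}{k-2} \le 2\tbinom{s-3}{k-2}$ (a consequence of $\tbinom{x}{k} = (1 \pm \tfrac{\dD}{5})\tbinom{s}{k}$ and iterated applications of Lemma \ref{shiftapprox}), this contradicts the hypothesis $E = E_2 < \tbinom{n-1}{k} - \dD\tbinom{s-3}{k-2}$. Hence $y = E_2$, so $E_1 \ge \tbinom{n-1}{k} - 250c\tbinom{x-3}{k-2}$ and $D = \tbinom{n-1}{k} - E_1 \le 250c\tbinom{x-3}{k-2} \le \dD\tbinom{s-3}{k-2}$, as required. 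The main subtlety is spotting that $s = n-1$ must be ruled out; without this, the ``furthermore'' hypothesis of Lemma \ref{defectapps}.iii could fail narrowly and one would be forced onto the weaker first bound with a potentially tiny $\tT$, which would not yield a useful estimate on $D$.
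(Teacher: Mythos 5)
Your proof is correct and follows essentially the same route as the paper: decompose relative to $\G_0 = \tbinom{[n]}{\ge k+1}$, invoke Lemma \ref{cleandefect} to get $|\pl_v(\A)| - \Lov{\A} \ge \big(f_k(E_1) + f_k(E_2)\big) - \big(f_k(\tbinom{x}{k}) + f_k(\tbinom{n-1}{k})\big)$, then apply the ``furthermore'' clause of Lemma \ref{defectapps}.iii with $c'=c$ and identify $\min\{E_1,E_2\}$ with $E_2$. One place where you are actually more careful than the paper's terse write-up is in explicitly ruling out $s=n-1$ to verify the hypothesis $\tbinom{x}{k} < \tbinom{(n-1)-1}{k} + \tfrac{1}{2}\tbinom{(n-1)-1}{k-1}$ of the ``furthermore'' clause before invoking it; the paper passes over this silently.

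One small remark on the comparison $\tbinom{x-3}{k-2} \le 2\tbinom{s-3}{k-2}$: attributing it to ``iterated applications of Lemma \ref{shiftapprox}'' is a little loose, since the error factor $\tfrac{y+k}{y-k}$ in the middle case of that lemma does not iterate cleanly near $s\approx k$. A cleaner justification comes from (\ref{mvt}): the hypothesis forces $|x-s| \le \tfrac{\dD}{5}\tbinom{s-3}{k-2}\tbinom{s-1}{k-1}^{-1} = \tfrac{\dD(k-1)(s-k)}{5(s-1)(s-2)}$, and since $g_{k-2}(s-3) \le \tfrac{k-2}{s-k}$ one gets $\log\big(\tbinom{x-3}{k-2}/\tbinom{s-3}{k-2}\big) \le |x-s|\,g_{k-2}(s-3) \le \tfrac{\dD}{5}$, which gives the ratio bound. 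The paper itself uses this comparison without comment, so this is not a defect in your argument relative to the source, just a spot where the cited lemma does not quite deliver what is claimed.
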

	
\begin{proof}
Let $E_1 = \tbinom {n-1}{k} - |\tbinom {[n]}{\geq k+1} \sm \A|$ 
and $E_2 = |\A \sm \tbinom {[n]}{\geq k+1}|$.
Then $E_1+E_2 = \tbinom {n-1}{k} + \tbinom{x}{k}$,
$E_{max} = \tbinom {n-1}{k}$ and $E_{min} = \tbinom{x}{k}$.
By assumption $E_2 \le \tbinom{n-1}{k} - \dD \tbinom {s-3}{k-2}$,
so $E_1 \ge \tbinom{x}{k} + \dD \tbinom {s-3}{k-2}$.
We may also assume $E_2 \ge 1$, as otherwise we are done. 
Then Lemma \ref{cleandefect} gives $|\pl_v(\A)| - \Lov {\A } 
\geq \big ( f_{k}(E_1) + f_{k}(E_2) \big ) 
- \big ( f_{k}( \tbinom{x}{k} ) + f_{k}( \tbinom {n-1}{k} ) \big )$.
By Lemma \ref{defectapps}.iii,
applied with $k$ and $\tfrac{ck(x-k)}{x^2}$
in place of $\ell$ and $c$ we have
$\min\{E_1,E_2\} \le \tbinom{x}{k} 
+ 250 c \tbinom{x-3}{k-2}$.
This bound must apply to $E_2$,
and we deduce $|\tbinom {[n]}{\geq k+1} \sm \A|
= E_2 - \tbinom{x}{k} \leq \dD \tbinom {s-3}{k-2}$.
\end{proof}
	
In the next lemma, with a proof similar to the previous one
but somewhat more involved, we boost the accuracy of 
approximation to a ball for sets that are approximately ball-sized.

\begin{lem} \label{vbcorrection2}
Let $\dD \in (0,1)$, $c=10^{-3}\dD$ and $\mc{A} \sub \{0,1\}^n$.
Suppose $|\A| = \tbinom {n}{\geq k} 
\pm \tfrac {\dD }{5} \tbinom {x-1}{k-1}$,
where $2 \le k \le x \le n$.
If $k=2$ suppose also that $|\A| \le \tbinom {n}{\geq 2}$.
Suppose $|\pl_v({\A})| < \Lov {\A} + \tfrac {c}{x} \tbinom {x}{k-1}$ 
and $|\A \sm \tbinom {[n]}{\geq k}| 
< \tbinom {n-1}{k-1} - \dD \tbinom {x-1}{k-1}$.
Then $|\A \triangle \tbinom {[n]}{\geq k}| 
\le \dD \tbinom {x-1}{k-1}$. 
\end{lem}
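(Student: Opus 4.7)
The plan is to mirror Lemma \ref{vbcorrection1}, now with the approximating ball $\G = \tbinom{[n]}{\geq k}$ itself rather than a clique sitting above it. I would set $D = |\tbinom{[n]}{\geq k} \sm \A|$, $E_2 = |\A \sm \tbinom{[n]}{\geq k}|$, and $E_1 = \tbinom{n-1}{k-1} - D$; then $|\A| = \tbinom{n}{\geq k+1} + \tbinom{n-1}{k} + E_1 + E_2$ matches the setup of Lemma \ref{cleandefect} with $\ell = k$, $t = n$. The size hypothesis gives $|D - E_2| \leq \tfrac{\dD}{5}\tbinom{x-1}{k-1}$, so the desired $D + E_2 \leq \dD\tbinom{x-1}{k-1}$ reduces to showing $E_2 \leq \tfrac{2\dD}{5}\tbinom{x-1}{k-1}$. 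One may assume $E_2 \geq 1$ and $E_1 \geq 1$, as the contrary cases are either immediate or already contradict the upper bound on $E_2$.

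For $k \geq 3$, Lemma \ref{cleandefect} combined with the $|\pl_v(\A)|$ hypothesis produces a defect bound $\Phi < \tfrac{c}{x}\tbinom{x}{k-1}$, comparing $(E_1, E_2)$ to $(E_{min}, E_{max})$. I would split on the sign of $E_2 - D$: when $E_2 \leq D$, one has $(E_{min}, E_{max}) = (0, E_1 + E_2)$ with $E_1 + E_2$ within $\tfrac{\dD}{5}\tbinom{x-1}{k-1}$ of $\tbinom{n-1}{k-1}$, and Lemma \ref{defectapps}.i applies with $\ell = k-1$; otherwise $E_{min} = E_2 - D \in (0, \tfrac{\dD}{5}\tbinom{x-1}{k-1}]$ and Lemma \ref{defectapps}.ii applies. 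Rewriting the budget as $\tfrac{c}{x}\tbinom{x}{k-1} = \tfrac{c(x-k+2)}{(k-1)x}\tbinom{x}{k-2}$ matches the hypothesis of either defect lemma, and in each case the resulting bound $\min(E_1,E_2) \leq E_{min} + 400c\tbinom{x-1}{k-1}$ pins down $E_2$: if instead $\min(E_1, E_2) = E_1$, then $D$ would lie within $400c\tbinom{x-1}{k-1}$ of $\tbinom{n-1}{k-1}$, and by the size hypothesis so would $E_2$, contradicting the assumed upper bound on $E_2$ since $c = 10^{-3}\dD$.

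The case $k = 2$ needs a separate argument, since $f_{k-1} = f_1 \equiv 1$ trivialises the defect in Lemma \ref{cleandefect}; here the added hypothesis $|\A| \leq \tbinom{n}{\geq 2}$ forces $E_2 \leq D$. I would apply Lemma \ref{lem: local stability} directly and compute via Lemma \ref{lem:plJ} and Lov\'asz's form of Kruskal--Katona that $|\pl_v(\A)| \geq |\pl_v(\J_{m,D,E_2})| \geq D + x' - E_2 + [E_2 \geq 1]$, where $\tbinom{x'}{2} = \tbinom{n}{2} - D$. Comparing with $\Lov{\A} = (D - E_2) + x$ and the $|\pl_v(\A)|$ hypothesis gives $x - x' > 1 - c$ whenever $E_2 \geq 1$; but the size and $E_2$ hypotheses force $\tbinom{x'}{2} \geq \tbinom{n-1}{2} + \tfrac{4\dD}{5}(x-1)$, which yields $x' - (n-1) = \Omega(\dD)$ and, since $x \leq n$, contradicts $x - x' > 1 - c$ as $c \ll \dD$. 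Hence $E_2 = 0$ and the conclusion follows. The main obstacle is this $k = 2$ degeneration, where the concavity machinery collapses and one must compute the comparison family's boundary explicitly and combine the size and boundary hypotheses to rule out any non-trivial $E_2$.
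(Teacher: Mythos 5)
Your proposal follows essentially the same route as the paper's proof: the same setup $E_1 = \tbinom{n-1}{k-1} - |\tbinom{[n]}{\geq k}\sm\A|$, $E_2 = |\A\sm\tbinom{[n]}{\geq k}|$, the same reduction to $E_2 \le \tfrac{2\dD}{5}\tbinom{x-1}{k-1}$, and for $k\ge 3$ the same two cases (your sign of $E_2-D$ is exactly the paper's $|\A|\le$ versus $>\tbinom{n}{\ge k}$), invoking Lemma \ref{cleandefect} and then Lemma \ref{defectapps}.i or .ii respectively. One small technical remark on that part: when you rewrite the budget as $\tfrac{c(x-k+2)}{(k-1)x}\tbinom{x}{k-2}$ you are implicitly applying Lemma \ref{defectapps} with its $x$-parameter equal to $x$ and its $n$-parameter equal to $n-1$, which violates the lemma's requirement $n_{\mathrm{lem}}\ge x_{\mathrm{lem}}$ when $x=n$; the paper quietly sidesteps this by taking the lemma's parameters to be $n-1$ and $x-1$ (with $\ell=k-1$), so that the conclusion's binomial $\tbinom{x_{\mathrm{lem}}-1}{\ell-1}=\tbinom{x-2}{k-2}$ combines with the adjusted $c'$ to give exactly $\tbinom{x-1}{k-1}$. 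Your version would need that minor adjustment but is otherwise the same argument.

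The only genuine divergence is the $k=2$ case, and there both arguments are correct but the paper's is cleaner. You invoke the Lov\'asz bound $|\pl\I^{(2)}_{\tbinom{n}{2}-D}|\ge x'$ with $\tbinom{x'}{2}=\tbinom{n}{2}-D$, obtain $x-x'>1-c$ when $E_2\ge 1$, and then show from the size and $E_2$ hypotheses that $x'>n-1+\Omega(\dD)$ while $x\le n$, giving a contradiction since $c\ll\dD$ (this needs the additional observation that $x\ge n-O(1)$ so that $\Omega(\dD)$ does beat $c$, which your sketch leaves implicit). The paper instead observes that since $D\le n-1$ the shadow is all of $\tbinom{[n]}{1}$, so $|\pl_v(\J_{m,D,E_2})|$ is computed exactly as $2^n-|\A|$ when $E_2\ge 1$, and $2^n-|\A|-\Lov{\A}=n-x+1\ge 1>c$, forcing $E_2=0$ in one line. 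Both work; yours replaces an exact computation by the weaker Lov\'asz estimate and compensates by using the quantitative size hypotheses, which costs a little more bookkeeping.
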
		

\begin{proof}
Let $E_1 = \tbinom {n-1}{k-1} - |\tbinom {[n]}{\geq k} \sm \A|$ and $E_2 = |\A \sm \tbinom {[n]}{\geq k}|$. Note that $E_1+E_2-\tbinom {n-1}{k-1} = |\A \sm \tbinom {[n]}{\geq k}| 
- |\tbinom {[n]}{\geq k} \sm \A| = |\A| - |\tbinom {[n]}{\geq k}|$. The hypotheses give $E_1, E_2 < \tbinom {n-1}{k-1}$ and $E_1 \geq 1$. We may also assume $E_2 \ge 1$, as otherwise we are done. For $k=2$ note that this is already contrary to the hypothesis. Indeed, taking $m = \binom {n}{\geq 2}$ and $D = \binom {n-1}{k-1}-E_1$, Lemma \ref{lem: local stability} gives $|{\partial }_v(A)| \geq |{\partial }_v({\cal J}_{m,D,E})| = 2^{n} - |{\cal A}| \geq B_{lov}({\cal A}) + 1$. Thus in this case $E_2 = 0$ and we are done. 

We now assume $k \geq 3$. Applying Lemma \ref{cleandefect} we obtain $|\pl_v(\A)| - \Lov {\A } 
\geq \Phi := \big ( f_{k-1}(E_1) + f_{k-1}(E_2) \big ) 
- \big ( f_{k-1}( E_{min} ) + f_{k-1}( E_{max} ) \big )$. We will argue according to $|{\cal A}|$. 

First consider the case $|\A| \le \tbinom {n}{\geq k}$. Then $E_{min}=0$ 
and $\tbinom{n-1}{k-1} - \tfrac {\dD }{5} \tbinom {x-1}{k-1}
\le E_{max} = E_1+E_2 \le \tbinom{n-1}{k-1}$.
Also, $\Phi \le \tfrac {c}{x} \tbinom {x}{k-1}$
and $E_1 \ge \frac {4\dD}{5} \tbinom {x-1}{k-1}$.
We have $|\A \triangle \tbinom {[n]}{\geq k}| = D+E_2$
where $D := |\tbinom {[n]}{\geq k} \sm \A| = \tbinom{n-1}{k-1}-E_1
\le E_2 + \tfrac {\dD }{5} \tbinom {x-1}{k-1}$, so it suffices 
to show $E_2 < \tfrac {2\dD}{5} \tbinom {x-1}{k-1}$.
Lemma \ref{defectapps}.i gives $\min\{E_1,E_2\} 
\le 400 c \tbinom{x-1}{k-1} \leq \tfrac {2\dD}{5} \tbinom{x-1}{k-1}$.
This upper bound is less than our lower bound on $E_1$, so applies to $E_2$.

%For the other subcases with
%$|\A| \le \tbinom {n}{\geq k}$ we also use local stability,
%i.e.\ Lemma \ref{lem: local stability} with $\ell=k$ and $t=n$,
%which gives $|\pl_v(\J_{m,D,E_2})| \le |\pl_v(\A)| 
%\le \Lov {\A} + \tfrac {c}{x} \tbinom {x-1}{k-1}$,
%where $m = \tbinom{n}{\geq k}$ and 
%$D = \tbinom{n-1}{k-1}-E_1 = |\tbinom {[n]}{\geq k} \sm \A|$.
%Note that $|\J_{m,D-E_2,0}|$ is the initial segment of the simplicial
%order of size $|\A|$, so $|\pl_v(\J_{m,D-E_2,0})| \ge \bcl{\Lov{\A}}$.
%If $k \ge 3$ we can assume $\tfrac {\dD}{3} \tbinom{x-1}{k-1} \le k$,
%so $|\pl_v(\J_{m,D,E_2})| - \Lov {\A} \le \tfrac {3c}{x} k < 3c < 1$.
%If $k=2$ we also have 
%$|\pl_v(\J_{m,D,E_2})| - \Lov {\A} \le \tfrac {c}{x} < 1$.
%Either way, we must have 
%$|\pl_v(\J_{m,D,E_2})| = |\pl_v(\J_{m,D-E_2,0})| = \bcl{\Lov{\A}}$.
%However, by uniqueness of the extremal examples in the
%Lov\'asz form of Kruskal--Katona this implies that
%$m-(D-E_2) = \tbinom{s}{k}$ for some integer $s$,
%which contradicts $D < \tbinom{n-1}{k-1}$.
%This completes the proof if $|\A| \le \tbinom {n}{\geq k}$.

It remains to consider
$|\A| > \tbinom {n}{\geq k}$.
Here we have $E_{max} = \tbinom {n-1}{k-1}$ and
$0 \le E_{min}=E_1+E_2-\tbinom {n-1}{k-1} 
\le \tfrac {\dD }{5} \tbinom {x-1}{k-1}$. Then $|\A \triangle \tbinom {[n]}{\geq k}| = D+E_2 = 2E_2 - E_{min}$. However Lemma \ref{defectapps}.ii gives $E_2 \le E_{min} + 400 c \tbinom{x-1}{k-1} \leq 
\frac {1}{2} E_{min} + \big ( \frac {\delta }{10} + 400 c \big )\binom {x-1}{k-1} \leq \frac {\delta }{2} \binom {x-1}{k-1} + \frac {1}{2} E_{min}$, which rearranging proves 
$|{\cal A}\triangle \binom {[n]}{\geq k}| \leq \delta \binom {x-1}{k-1}$ as required.
\end{proof} 
	
Our final lemma of this subsection
relates the vertex boundary of $\A$
to that of its sections, namely the families
$\A^0$ and $\A^1$ in $\{0,1\}^{n-1}$ defined by
\begin{equation}\label{eqsec}
\A^j = \{x \in \{0,1\}^{n-1}: (x,j) \in \A \}.
\end{equation}
We use superscripts of $(n-1)$
to avoid confusion between $\{0,1\}^{n-1}$ and $\{0,1\}^n$.

\begin{lem} \label{sections}
Let $\dD \in (0,1)$, $c = 10^{-3}\dD$
and $\A \subset \{0,1\}^n$ with 
$|\A| = \tbinom {n}{\geq k+1} + \tbinom {x}{k}$, 
where $\tbinom {x}{k} = \tbinom {s}{k} 
\pm \tfrac {\dD}{5} \tbinom{s-3}{k-2}$ 
for some $s \in [k,n-1]$. 
Suppose $|\pl_v(\A )| \leq \Lov {\A } + \Phi $ and 
$|\A^1|\geq |\A^0| \geq \tbinom {n-1}{\geq k+1} + \tbinom {x-1}{k}$. Then: 
\begin{enumerate}[(i)]
\item  
$|\pl_v^{(n-1)}(\A^0)|  \leq \LovDim {n-1} {\A^0} + \Phi$  and 
$|\pl_v^{(n-1)}(\A^1)| \leq  \LovDim {n-1} {\A^1} + \Phi $.
\item If $k\geq 2$ and $\Phi \leq \tfrac {ck(x-k)}{x^3} \tbinom {x}{k-1}$ 
then $|\A^0| = \tbinom {n-1}{\geq k+1} + 
\tbinom {x-1}{k} \pm  \dD \tbinom {x-1}{k-1}$ 
or $|\A^0| = \tbinom {n-1}{\geq k+1} + 
\tbinom {x}{k} \pm \dD \tbinom {x-1}{k-1}$.
\end{enumerate} 
\end{lem}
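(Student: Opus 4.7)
The plan is to control $|\pl_v(\A)|$ via the sections $\A^0, \A^1$ of $\A$ in the last coordinate (see (\ref{eqsec})), and then apply the defect estimates of Section~2. For each $j$, a vertex $(y,j) \in \{0,1\}^n$ lies in $\pl_v(\A)$ precisely when $y \notin \A^j$ and either $y \in \A^{1-j}$ or $y$ has a neighbour in $\A^j$ inside $\{0,1\}^{n-1}$; so the $j$-section of the boundary equals $\pl_v^{(n-1)}(\A^j) \cup (\A^{1-j} \setminus \A^j)$. Since $\pl_v^{(n-1)}(\A^j)$ is disjoint from $\A^j$, inclusion--exclusion summed over $j$ together with the trivial bound on the overlap gives the submodularity
\[ |\pl_v(\A)| \;\ge\; |\pl_v^{(n-1)}(\A^0)| + |\pl_v^{(n-1)}(\A^1)|. \]

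For (i), combining this with the Lov\'asz form of Harper on each section yields, for each $j$, $|\pl_v^{(n-1)}(\A^j)| \le \Lov{\A} + \Phi - \LovDim{n-1}{\A^{1-j}}$, so it remains to show $\LovDim{n-1}{\A^0} + \LovDim{n-1}{\A^1} \ge \Lov{\A}$. I would split on whether $|\A^1| \le \tbinom{n-1}{\ge k}$. In the ``same-shell'' case, writing $|\A^j| = \tbinom{n-1}{\ge k+1}+\tbinom{x_j}{k}$, the identity $|\A|=|\A^0|+|\A^1|$ becomes $\tbinom{x_0}{k}+\tbinom{x_1}{k}=\tbinom{x}{k}+\tbinom{n-1}{k}$ and the inequality follows by concavity of $f_k$ on $[\tbinom{x}{k},\tbinom{n-1}{k}]$. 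In the ``shell-jump'' case $|\A^1|>\tbinom{n-1}{\ge k}$, I write $|\A^1|=\tbinom{n-1}{\ge k}+\tbinom{w}{k-1}$ (with $w \le x-1$, using $|\A^0| \ge \tbinom{n-1}{\ge k+1}+\tbinom{x-1}{k}$); the identity becomes $\tbinom{x_0}{k}+\tbinom{w}{k-1}=\tbinom{x}{k}$ and the required inequality is exactly Lemma~\ref{concavecor} with $\ell=k$ and $z=\tbinom{w}{k-1}$.

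For (ii), rearranging the above gives $\LovDim{n-1}{\A^0}+\LovDim{n-1}{\A^1}-\Lov{\A} \le \Phi$, and this left-hand side matches the defect setups of Lemmas~\ref{defectapps}.iii and \ref{defectapp2}. Writing $|\A^0|=\tbinom{n-1}{\ge k+1}+\tbinom{y}{k}$ with $y \ge x-1$, the same-shell subcase $y \ge x$ gives the defect $f_k(\tbinom{y}{k})+f_k(\tbinom{z}{k})-f_k(\tbinom{x}{k})-f_k(\tbinom{n-1}{k})$ with sum $\tbinom{x}{k}+\tbinom{n-1}{k}$; Lemma~\ref{defectapps}.iii (``furthermore'', applied with $n$ replaced by $n-1$ and parameter $c$) bounds $\tbinom{y}{k}-\tbinom{x}{k}$ by $250c\tbinom{x-3}{k-2} \le \dD\tbinom{x-1}{k-1}$. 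In the shell-jump subcase $x-1 \le y < x$, the identity $\tbinom{y}{k}+\tbinom{w}{k-1}=\tbinom{x}{k}$ converts the defect to $f_k(\tbinom{y}{k})+f_{k-1}(\tbinom{w}{k-1})-\tbinom{x}{k-1}$, and Lemma~\ref{defectapp2} with effective parameter $c' = \tfrac{ck(x-k)}{x^2} \le c/4$ forces $\tilde y := \tbinom{y}{k}-\tbinom{x-1}{k}$ outside the interval $[600c'\tbinom{x-1}{k-1},(1-600c')\tbinom{x-1}{k-1}]$, pinning $|\A^0|$ within $\dD\tbinom{x-1}{k-1}$ of one of the two stated target values.

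The main obstacle is the case analysis around the shell of $|\A^1|$ and the careful propagation of the parameter $\tfrac{ck(x-k)}{x^3}$ through the defect lemmas so that all errors absorb cleanly into $\dD\tbinom{x-1}{k-1}$. Minor degenerate regimes---the small case $k=2$ (where Lemma \ref{defectapp2} does not apply, but the ``defect'' $y-x+1$ can be read off directly from $f_1 \equiv 1$), and values of $x$ near $n-1$ (where the ``furthermore'' hypothesis of Lemma \ref{defectapps}.iii fails)---need separate but essentially trivial treatments using that $\Phi$ is very small in these ranges.
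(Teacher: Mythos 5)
Your proposal is correct and follows essentially the same approach as the paper: the same section-submodularity inequality $|\pl_v(\A)| \ge |\pl_v^{(n-1)}(\A^0)| + |\pl_v^{(n-1)}(\A^1)|$, the same reduction to $\LovDim{n-1}{\A^0} + \LovDim{n-1}{\A^1} \ge \Lov{\A}$, the same case split on whether $|\A^1|$ jumps a shell, and the same use of concavity, Lemma~\ref{concavecor}, Lemma~\ref{defectapps}.iii and Lemma~\ref{defectapp2} for (ii), with the same separate treatments of $k=2$ and $x$ near $n-1$. The paper only explicitly carries out the ``$y$ small'' alternative of Lemma~\ref{defectapp2} in the shell-jump case, whereas you note that the excluded-interval conclusion pins $|\A^0|$ near one of the two stated targets; your phrasing is the cleaner reading of how both alternatives give conclusion (ii).
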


\begin{proof}
Write $X = |\pl_v(\A )| - \Lov {\A }$ and
$X_j = |\pl_v^{(n-1)}(\A^j)| - \LovDim{n-1}{\A^j}$.
Then $X$, $X_0$ and $X_1$ are non-negative
by the Lov\'asz form of Harper's theorem.
We will show $X \ge X_0 + X_1$, which implies (i). 
First we note that $|\pl_v({\A })| \geq 
|\pl_v^{(n-1)}({\A }^0)| + |\pl_v^{(n-1)}({\A }^1)|$,
so it suffices to show
$\LovDim {n-1}{\A^0} + \LovDim {n-1}{\A^1} \ge \Lov{\A}$.
We let $E_j = |\A^j| - \tbinom {n-1}{\geq k+1}$ for $j=0,1$
and consider two cases according to the value of $E_1$.

The first case is $E_1 \leq \tbinom {n-1}{k}$.
Note that $\tbinom{x}{k} \le E_0 \le E_1 \leq \tbinom {n-1}{k}$.
We have $\LovDim {n-1}{\A^j} = \tbinom{n-1}{k} - E_j + f_k(E_j)$ 
for $j=0,1$. As $E_0 + E_1 = \tbinom{x}{k} + \tbinom{n-1}{k}$,
by concavity $f_k(E_0)+f_k(E_1) \ge f_k(\tbinom{x}{k}) + f_k(\tbinom{n-1}{k})
= \tbinom{x}{k-1} + \tbinom{n}{k-1}$, so
$\LovDim {n-1}{\A^0} + \LovDim {n-1}{\A^1} 
\ge 2 \tbinom{n-1}{k} - (\tbinom{x}{k} + \tbinom{n-1}{k})
+ (\tbinom{x}{k-1} + \tbinom{n-1}{k-1})
= \tbinom{n-1}{k} - \tbinom{x}{k} + \tbinom{x}{k-1} = \Lov{\A}$,
as required for (i). For (ii), first note that if $s=n-1$
then $\tbinom {n-1}{k} -\tfrac {\dD}{5} \tbinom{n-4}{k-2}
\le E_0 \le \tbinom {n-1}{k}$,
so $E_0 = \tbinom {x}{k} \pm  \dD \tbinom {x-3}{k-2}$ 
by Lemma \ref{shiftapprox}.
If $s \le n-2$ then the previous calculation gives
$\Phi \ge X_0 + X_1 \ge f_k(E_0)+f_k(E_1) - 
\big( f_k(\tbinom{x}{k}) + f_k(\tbinom{n-1}{k}) \big)$,
so $E_0 \le \tbinom{x}{k} + \dD \tbinom {x-3}{k-2}$
by Lemma \ref{defectapps}.iii.
  
The second case is $E_1 \geq \tbinom {n-1}{k}$, say
$E_1 = \tbinom {n-1}{k} + E'_1$ with $E'_1 \ge 0$.
Note that $E_0 + E'_1 = \tbinom{x}{k}$.
By the lemma hypotheses, $E_0 \geq \tbinom {x-1}{k}$,
so $E'_1 \leq \tbinom {x-1}{k-1}$. Adopting the notation
of Lemma \ref{defectapp2}, we write $X = \tbinom {x-1}{k}$,
$Y = \tbinom {x-1}{k-1}$, $E_0=X+y$, $E'_1=Y-y$ with $0 \le y \le Y$.
By concavity we have $f_k(E_0)+f_{k-1}(E'_1)
\ge f_k(X)+f_{k-1}(Y) =  \tbinom{x}{k-1}$. We have
$\LovDim {n-1}{\A^1} = \tbinom{n-1}{k-1} - E'_1 + f_{k-1}(E'_1)$,
so $\LovDim {n-1}{\A^0} + \LovDim {n-1}{\A^1} 
\ge \tbinom{n-1}{k} - E_0 + f_k(E_0)
+ \tbinom{n-1}{k-1} - E'_1 + f_{k-1}(E'_1)
= \tbinom{n}{k} - \tbinom{x}{k} + f_k(E_0) + f_{k-1}(E'_1)
\ge \tbinom{n-1}{k} - \tbinom{x}{k} + \tbinom{x}{k-1} = \Lov{\A}$,
as required for (i). For (ii), the same calculation gives
$f_k(E_0) + f_{k-1}(E'_1) < \tbinom{x}{k-1} + \Phi$. For $k\geq 3$ by Lemma \ref{defectapp2}, applied with $\tfrac {ck(x-k)}{x^2}$ in place of $c$,
we have $E_0 \le \tbinom {x-1}{k} + 600 \tfrac {ck(x-k)}{x^2} \tbinom {x-1}{k-1}
\le \tbinom {x-1}{k} + \dD \tbinom {x-3}{k-2}$. 

It remains to show (ii) when $k=2$ and $E_1 \geq \binom {n-1}{k}$. 
Note that here $\binom {x}{k} = \binom {s}{k} \pm \frac {\delta }{5}$,
so $\binom {x}{k} = \binom {s}{k}$. However, if $E_0 > \binom {s-1}{2}$ and $E_1' >0$ then applying Harper's theorem to both ${\cal A}^0$ and ${\cal A}^1$ gives $|{\partial }_v({\cal A})| \geq (\binom {n-1}{2}- E_0 + s ) + ( \binom {n-1}{1} - E_1' + 1) = \Lov {{\cal A}} + 1 > \Lov {{\cal A}} + \Phi $, which is a contradiction.
Thus either $E_0 = \binom {s-1}{2}$ or $E_1' = 0$, as required. \end{proof}
	
\subsection{Stability for ball-sized sets}

In this subsection we will prove our first stability result 
for the vertex isoperimetric inequality, which applies to 
families with size close to that of a Hamming ball;
the case $|{\cal A}| = \binom {n}{\geq k}$
implies Theorem \ref{thm: ball-sized sets}.
		
\begin{thm} \label{thm: ball-sized stability}
Suppose $\dD \in (0,1/4)$ and $\A \sub \{0,1\}^n$ 
with $|\A| = m \pm \frac {\delta }{5} \binom {n-1}{k-1}$,
where $m = \tbinom{n}{\geq k}$ and 
$|\pl_v(\A)| \leq (1 + \tfrac{c}{n}) \tbinom{n}{k-1}$,
with $c = 10^{-3} \dD$. If $k=2$ suppose 
also that $|\A| \le m$. Then 
$|\A\triangle \B| \leq \dD \tbinom{n-1}{k-1}$ 
for some Hamming ball $\B$.
Furthermore, $|\pl_v(\A)| \geq |\pl_v(\J_{m,D,E})|$
where $D = |\B \sm \A|$ and $E = |\A \sm \B|$.
\end{thm}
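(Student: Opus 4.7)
The plan is to split the proof into the two claims: the ``furthermore'' inequality will follow immediately from Lemma~\ref{lem: local stability}, while the closeness of $\A$ to some Hamming ball will come from Theorem~\ref{thm: compression theorem} combined with a reverse-induction argument that uses Lemma~\ref{vbcorrection2} at every step to preclude cumulative drift.

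For the ``furthermore'' clause, observe that any Hamming ball $\B \subset \{0,1\}^n$ of radius $n-k$ is, up to an automorphism of $Q_n$, equal to $\binom{[n]}{\ge k} = \binom{[n]}{\ge k+1} \cup \binom{[n]}{k}$, i.e.\ a generalised Hamming ball of the first type in the sense of Lemma~\ref{lem: local stability}, with $\ell=k$ and $t=n$. Since automorphisms of $Q_n$ preserve both $|\pl_v(\cdot)|$ and the initial-segment construction $\J_{m,D,E}$, it suffices to treat this specific $\B$. The bound $\delta<1/4$ forces $D,E\le \delta\binom{n-1}{k-1}<\binom{n-1}{k-1}=\binom{t-1}{\ell-1}$, so Lemma~\ref{lem: local stability} applies to $\A$ and $\B$ and gives $|\pl_v(\A)|\ge|\pl_v(\J_{m,D,E})|$ directly.

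For the closeness claim, I would apply Theorem~\ref{thm: compression theorem} to obtain a sequence $\A=\A_0,\A_1,\ldots,\A_{L_1}$ with $|\A_i|=|\A|$, $|\pl_v(\A_i)|\le|\pl_v(\A)|$, and $\A_{L_1}$ of ball-like form $\binom{[n]}{\ge \ell+1}\cup \mc{B}$. The size hypothesis on $\A$, combined with $|\A|\le m$ when $k=2$, forces $\ell=k$ and $|\A_{L_1}\setminus \binom{[n]}{\ge k}|\le \tfrac{\delta}{5}\binom{n-1}{k-1}$. Lemma~\ref{vbcorrection2} with $x=n$ then applies and gives $|\A_{L_1}\triangle \binom{[n]}{\ge k}|\le \delta\binom{n-1}{k-1}$; set $\B_{L_1}=\binom{[n]}{\ge k}$. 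I would then reverse-induct on $i$ to maintain a Hamming ball $\B_i$ with $|\A_i\triangle \B_i|\le \delta\binom{n-1}{k-1}$: in the upset stage ($i>L_0$) the compressions $C_{U_i,V_i}$ with $U_i\ne\es$ fix $\binom{[n]}{\ge k}$, so $\B_{i-1}=\B_i$ is a natural candidate, while in the initial stage ($i\le L_0$) the compression $C_{\es,V_i}$ maps a Hamming ball centred at $C$ (with $V_i\cap C=\es$) to the Hamming ball centred at $C\cup V_i$, and the inverse transport produces a natural $\B_{i-1}$. In either case a direct count of the pairs moved by the compression gives $|\A_{i-1}\triangle \B_{i-1}|<(1-\delta)\binom{n-1}{k-1}$, and Lemma~\ref{vbcorrection2} (applicable since $|\pl_v(\A_{i-1})|\le (1+c/n)\binom{n}{k-1}$) then sharpens this crude bound back to $\delta\binom{n-1}{k-1}$. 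Taking $i=0$ produces the required $\B$.

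The hard part is the reversal step: one must ensure that after transporting $\B_i$ to the candidate $\B_{i-1}$, the distance $|\A_{i-1}\triangle \B_{i-1}|$ stays strictly below the threshold $(1-\delta)\binom{n-1}{k-1}$ at which Lemma~\ref{vbcorrection2} can be invoked. This is precisely what rules out a potential cumulative drift across the $L_1$ reversal steps, and it is why the numerical slack $\tfrac{1}{5}\binom{n-1}{k-1}$ appears in the size hypothesis (rather than the final constant $\delta\binom{n-1}{k-1}$).
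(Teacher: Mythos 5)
Your overall strategy — compression via Theorem~\ref{thm: compression theorem}, reverse induction, boosting at each step with Lemma~\ref{vbcorrection2}, and Lemma~\ref{lem: local stability} for the ``furthermore'' clause — is exactly the paper's. The base case, the ``furthermore'' deduction, and the upset-stage count (sets moved into $\tbinom{[n]}{\geq k}$ by a $C_{U,V}$ with $|V|\geq 2$ number at most $\tbinom{n-3}{k-2}\leq\tfrac12\tbinom{n-1}{k-1}$) all match and are fine, though in the base case you do not need Lemma~\ref{vbcorrection2} at all: ball-likeness of $\A_{L_1}$ plus the size bound gives $|\A_{L_1}\triangle\tbinom{[n]}{\geq k}|\leq\tfrac{\delta}{5}\tbinom{n-1}{k-1}$ directly.

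The gap is in the initial stage ($C_{\es,\{s\}}$). You assert that ``the inverse transport produces a natural $\B_{i-1}$'' and that ``a direct count of the pairs moved by the compression gives $|\A_{i-1}\triangle\B_{i-1}|<(1-\delta)\binom{n-1}{k-1}$.'' But there is no single natural choice that works: if $\A_i$ is close to $\B_i=\B^n_{n-k}(D)$ with $s\in D$, then \emph{both} $\B^n_{n-k}(D)$ and $\B^n_{n-k}(D\sm\{s\})$ are fixed (resp.\ mapped to $\B_i$) by $C_{\es,\{s\}}$, and $\A_{i-1}$ could itself be either of these balls, so no predetermined candidate is always within the $(1-\delta)\tbinom{n-1}{k-1}$ threshold (indeed $|\B^n_{n-k}(D)\triangle\B^n_{n-k}(D\sm\{s\})|=2\tbinom{n-1}{k-1}$). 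The paper resolves this by considering both candidates $\B^{(1)}=\B_{i+1}$ and $\B^{(2)}=\B_{i+1}\triangle\{s\}$ and proving the exact conservation identity $|\B^{(1)}\sm\A_i|+|\B^{(2)}\sm\A_i|=|\B^{(1)}\sm\A_{i+1}|+|\B^{(2)}\sm\A_{i+1}|$, which uses that $C_{\es,\{s\}}$ permutes $\B^{(1)}\cup\B^{(2)}$; only then does one of the two candidates land inside the $(1-\delta)$ window, after which Lemma~\ref{vbcorrection2} boosts. Your sketch has no substitute for this identity, and a naive one-sided count for a fixed candidate does not give the required bound.
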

	
\begin{proof}
Let $\{(U_i,V_i)\}_{i\in [L_1]}$ be the sequence of compressions
provided by Theorem \ref{thm: compression theorem}.
We show by induction on $L_1 \ge i \ge 0$ that
there is a Hamming ball $\B_i$ of radius $n-k$ 
such that $|\B_i \triangle \A_i| \leq \dD \tbinom {n-1}{k-1}$. 
As $\A_0=\A$ this will prove the theorem (the `furthermore' 
statement following from Lemma \ref{lem: local stability}).
Initially, it holds with $\B_{L_1} = \B := \tbinom {[n]}{\geq k}$,
as $\A_{L_1} = {\cal I}_{|\A |}$ and $|\A| = \tbinom {n}{\geq k} 
\pm \tfrac {\dD }{5} \tbinom {n-1}{k-1}$.

For $L_1 \ge i \ge L_0$ we show the required statement
with $\B_i = \B$. Suppose $i\in [L_0,L_1-1]$ and
$|\tbinom {[n]}{\geq k} \triangle \A_{i+1}| 
\leq \dD \tbinom {n-1}{k-1}$. As $|V_i|=|U_i|+1$,
if $A \in (\B \sm \A_i) \sm (\B \sm \A_{i+1})$
we have $|A|=k$, $V \sub A$ and $U \cap A = \es$.
The number of such sets $A$ is 
$\tbinom{n-|U|-|V|}{k-|V|} \le \tbinom {n-3}{k-2}$,
so $|\B \sm \A_i| \leq |\B \sm \A_{i+1}| + \tbinom {n-3}{k-2} 
\leq (\dD + \tfrac {1}{2} ) \tbinom {n-1}{k-1} 
\leq (1 - \dD ) \tbinom {n-1}{k-1}$ as $\dD < \tfrac {1}{4}$.
Lemma \ref{vbcorrection2} (with $x=n$) improves this to 
$|\B \sm \A_i| \leq \dD \tbinom {n-1}{k-1}$, as required.

Now suppose $i\in [0,L_0-1]$ and 
$|\B_{i+1} \sm \A_{i+1}| \leq \dD \tbinom {n-1}{k-1}$ 
where $\B_{i+1} = \B^n_{n-k}(A_{i+1})$ is a Hamming ball 
of radius $n-k$, centred at $A_{i+1} \subset [n]$. 
We have $U_i = \es $ and $V_i = \{s\}$ for some $s\in [n]$. 
Let $\B^{(1)} = \B_{i+1}$ and 
$\B^{(2)} = \B_{i+1} \triangle \{s\} = \B^n_{n-k}(A'_{i+1})$, 
where $A_{i+1}' := A_{i+1} \triangle \{s\}$. We claim that
\[ |\B^{(1)} \sm \A_{i}| + |\B^{(2)} \sm \A_{i}|
= |\B^{(1)} \sm \A_{i+1}| + |\B^{(2)} \sm \A_{i+1}|.\]
To see this, we consider the number of times 
that any set $A$ is counted by each side of the identity.
If $C_{\es,\{s\}}(A)=A$ then interchanging $\A_i$ and $\A_{i+1}$ 
does not affect the contribution of $A$. This remains true
when $C_{\es,\{s\}}(A) \ne A$, unless $A \in \A_i \sm \A_{i+1}$
and $A \in \A_{i+1} \sm \A_i$. In this last case, we note that 
$C_{\es,\{s\}}(\B^{(1)} \cup \B^{(2)}) = \B^{(1)} \cup \B^{(2)}$,
so $A$ contributes to the left hand side of the identity
iff $C_{\es,\{s\}}(A)$ contributes to the right hand side.
The claim follows.

As $|\B_{i+1} \sm \A_{i+1}| \leq \dD \tbinom {n-1}{k-1}$,
we deduce $|\B^{(1)} \sm \A_{i}| + |\B^{(2)} \sm \A_{i}| 
\leq \tbinom {n-1}{k-1} + 2 \dD \tbinom {n-1}{k-1}$, so
$|\B^{(j)} \sm \A_{i}| \leq \tfrac {1}{2} \tbinom {n-1}{k-1} 
+\dD \tbinom {n-1}{k-1}\leq \tbinom {n-1}{k-1} - \dD \tbinom {n-1}{k-1}$ 
for some $\B_i \in \{\B^{(1)},\B^{(2)}\}$ (as $\dD < \tfrac {1}{4}$).
Lemma \ref{vbcorrection2} improves this to 
$|\B_i \triangle \A_i| 	\leq \dD \tbinom {n-1}{k-1}$,
and so completes the proof.
\end{proof}

\subsection{Decompressing upsets}

Of the two extremal families in Theorem \ref{thm: VIP stability},
only one ($\G_1)$ is an upset. In this subsection we show that
any upset with small vertex boundary is approximated by such a family.	
	
\begin{lem} \label{lem: decompressing upsets}
Let $\dD \in (0, \tfrac {1}{3})$, $c = 10^{-9}\dD$, 
$k \geq 2$
and $\A \subset \{0,1\}^n$ be an upset with 
$|\A| = \tbinom {n}{\geq k+1} + \tbinom {x}{k}$ and
$|\pl_v(\A)| \leq \Lov {\A } 
+ \tfrac {ck(x-k)}{x^3} \tbinom {x}{k-1}$,
where $\tbinom {x}{k} = \tbinom {|S|}{k} 
\pm \tfrac {\dD}{5} \tbinom {|S|-3}{k-2}$ 
for some $|S| \in [k,n-1]$. 

Suppose that $U, V \subset [n]$ are disjoint sets 
with $|U| + 1 = |V| \geq 2$ and  $\B = C_{U,V}(\A)$ 
satisfies $|\B \triangle \G| \leq \dD \tbinom {|S|-3}{k-2}$, 
where $\G = \tbinom {[n]}{\geq k+1} \cup \tbinom {S}{k}$. 
Then $|\A \triangle \G| \leq \dD  \tbinom {|S|-3}{k-2}$.
\end{lem}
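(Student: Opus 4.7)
The plan is to apply Lemma \ref{localstabcor} to $\A$ and $\G$, whose conclusion is precisely $|\A \triangle \G| \leq \dD\binom{|S|-3}{k-2}$. The hypotheses of Lemma \ref{localstabcor} on $|\A|$, the approximation of $\binom{x}{k}$ by $\binom{|S|}{k}$, and on $|\pl_v(\A)|$ are immediate from those of the present lemma (the constant $c = 10^{-9}\dD$ matching). It remains to verify
\[
|\A \sm \G| \leq \tbinom{|S|-1}{k-1} - \dD \tbinom{|S|-3}{k-2},
\]
which, since $\tbinom{|S|-1}{k-1} \ge k \tbinom{|S|-3}{k-2} \ge 2\tbinom{|S|-3}{k-2}$ for $|S| \ge k+1$, is implied by $|\A \sm \G| = O(\dD)\binom{|S|-3}{k-2}$.

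Writing $\A \sm \G \sub (\B \sm \G) \cup (\A \sm \B)$ gives $|\A \sm \G| \le \dD \binom{|S|-3}{k-2} + |\A \sm \B|$, so it suffices to bound $|\A \sm \B|$, which equals $|\B \sm \A|$ via the bijection $A \mapsto A' := C_{U,V}(A)$ with $|A'|=|A|+1$. I would split by the size of the image: for $|A'| \ge k+1$, $A' \in \binom{[n]}{\ge k+1} \sub \G$, so such images lie in $\binom{[n]}{\ge k+1} \sm \A$; for $|A'| = k$ with $A' \sub S$, the image lies in $\binom{S}{k} \sm \A$; otherwise $A' \in \B \sm \G$. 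As images at different sizes are disjoint, this gives
\[
|\A \sm \B| \leq \bsize{\tbinom{[n]}{\ge k+1} \sm \A} + \bsize{\tbinom{S}{k} \sm \A} + 2 \dD \tbinom{|S|-3}{k-2}.
\]

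To bound the first term on the right I would apply Lemma \ref{vbcorrection1} to $\A$ to obtain $|\binom{[n]}{\ge k+1} \sm \A| \le \dD\binom{|S|-3}{k-2}$; its size hypothesis $|\A \cap \binom{[n]}{\le k}| < \binom{n-1}{k} - \dD\binom{|S|-3}{k-2}$ follows from $\binom{x}{k} \le \binom{|S|}{k} + \frac{\dD}{5}\binom{|S|-3}{k-2}$ and $|S| \le n-1$ in the non-boundary cases, with the edge case $|S|=n-1$ handled via the upset density monotonicity $\mu_k \le \mu_{k+1}$ together with the hypothesis $|\B \triangle \G| \le \dD\binom{|S|-3}{k-2}$ and $|\A|=|\B|$, which together force $|\A \cap \binom{[n]}{\ge k+1}|$ to be almost all of $\binom{n}{\ge k+1}$. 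For the second term $|\binom{S}{k} \sm \A|$, I would combine the previous bound with an upset/Kruskal--Katona shadow argument controlling $|\A \cap \binom{[n]}{k} \sm \binom{S}{k}|$ and $|\A \cap \binom{[n]}{\le k-1}|$: the upset property forces every $A \in \A \cap \binom{[n]}{k-1}$ to contribute $n-k+1$ size-$k$ supersets to $\A$, and since $|\A \cap \binom{[n]}{k}|$ is close to $\binom{|S|}{k}$, this double-count bounds $|\A \cap \binom{[n]}{\le k-1}|$ by $O(\dD)\binom{|S|-3}{k-2}$; then the coarse Kruskal--Katona stability from Theorem \ref{thm: KK-stab}, applied to $\A \cap \binom{[n]}{k}$ viewed as a family whose upper shadow lies in $\A \cap \binom{[n]}{k+1}$, pins $\A \cap \binom{[n]}{k}$ close to $\binom{S}{k}$. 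Summing gives $|\A \sm \B| = O(\dD)\binom{|S|-3}{k-2}$, hence $|\A \sm \G| = O(\dD)\binom{|S|-3}{k-2}$, and Lemma \ref{localstabcor} yields the desired conclusion. The main obstacle is extracting the sharp $O(\dD)\binom{|S|-3}{k-2}$ bound on $|\binom{S}{k} \sm \A|$ from the weaker a priori information: the circular dependence between $|\A \triangle \G|$ and the section bounds must be carefully broken by combining the upset structure, the compression monotonicity, and the vertex boundary hypothesis.
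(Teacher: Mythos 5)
Your overall strategy --- get a coarse $O(\dD)\binom{|S|-3}{k-2}$ bound on $|\A \triangle \G|$ and then sharpen it via Lemma~\ref{localstabcor} --- matches the paper's, and your use of Lemma~\ref{vbcorrection1} to control $\mc{D} := \tbinom{[n]}{\ge k+1} \sm \A$ is also the paper's route. But there is a genuine gap in the middle step, and you partly flag it yourself at the end.

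The problem is your choice to bound the full quantity $|\A \sm \B|$ rather than the smaller quantity $|\A \sm \G| - |\B \sm \G|$. Once you decompose $|\A\sm\B|$ by the size of $C_{U,V}(A)$, you are forced to bound $|\tbinom{S}{k} \sm \A|$, which is a subset of $\G \sm \A$ and hence exactly the sort of thing you are trying to control --- the circularity you identify is real and your sketch does not break it. The double-counting $(n-k+1)|\A\cap\tbinom{[n]}{k-1}| \le k|\A\cap\tbinom{[n]}{k}|$ only gives a useful constant when $n \ge 2k$ (which need not hold here, since only $k \le |S| \le n-1$ is assumed), and even granting it, one gets $|\A\cap\tbinom{[n]}{\le k-1}| = O(\tfrac{k}{n}\tbinom{|S|}{k})$, which is far larger than the needed $O(\dD)\tbinom{|S|-3}{k-2}$; nor is there an obvious way to feed $\A\cap\tbinom{[n]}{k}$ into Theorem~\ref{thm: KK-stab} because you lack the required bound on $|\pl(\A\cap\tbinom{[n]}{k})|$. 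The paper sidesteps all of this by bounding only the ``removed from $\G$'' sets, i.e.\ those $A\in(\A\sm\B)\sm\G$ with $C_{U,V}(A)\in\G$, and splitting them into type (a) ($C_{U,V}(A)\in\tbinom{[n]}{k+1}$) and type (b) ($C_{U,V}(A)\in\tbinom{S}{k}$). For type (b) sets the paper constructs an injection $A\mapsto A+s$ (for a carefully chosen $s$, possible because $\A$ is an upset) into $\A \cap \big(\tbinom{[n]}{k}\sm\tbinom{S}{k}\big)$, and the latter is bounded by $|\mc{D}| + |\B\sm\G|$, both of which are under control. The crucial difference is that $\A\cap\big(\tbinom{[n]}{k}\sm\tbinom{S}{k}\big)\subset\A\sm\G$ can be compared with $\B$ via the compression (each such $A$ is either in $\B\sm\G$ or its image is in $\mc{D}$), whereas your target $\tbinom{S}{k}\sm\A\subset\G\sm\A$ admits no such reduction.

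A second, lesser, issue: the hypothesis of Lemma~\ref{vbcorrection1} that $|\A\sm\tbinom{[n]}{\ge k+1}| < \tbinom{n-1}{k} - \dD\tbinom{|S|-3}{k-2}$ is obtained in the paper via the decompression estimate $|\A\sm\tbinom{[n]}{\ge k+1}| \le |\B\sm\tbinom{[n]}{\ge k+1}| + \tbinom{n-|U|-|V|}{k+1-|V|}$, which is the identity $|\A\cap\tbinom{[n]}{\le k}| - |\B\cap\tbinom{[n]}{\le k}| = |\{A\in\A\sm\B:|A|=k\}|$ under the bijection; your appeal to ``upset density monotonicity'' plus $|\B\triangle\G|$ small is not a complete argument for this. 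And when $|S|\in\{n-2,n-1\}$ the paper needs a substantial further case analysis (according to $U\cap S^c$ and $V\cap S^c$, eventually passing to sections in direction $n$ via Lemma~\ref{sections} and Theorem~\ref{thm: ball-sized stability}), none of which your sketch addresses.
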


\begin{proof}
First we note that 
$|\A| = |\B| = |\G| \pm \dD \tbinom {|S|-3}{k-2}$,
so $|\G \sm \A| - |\G \sm \B|
\le |\A \sm \G| - |\B \sm \G| 
+ 2 \dD \tbinom {|S|-3}{k-2}$, and so 
$|\A \triangle \G| - |\B \triangle \G|
\le 2(|\A \sm \G| - |\B \sm \G|
+ \dD \tbinom {|S|-3}{k-2})$.
It will therefore suffice to bound $|\A \sm \G| - |\B \sm \G|$, 
which counts sets removed from $\G$ under the decompression,
i.e.\ $C_{U,V}(A) \in (\B \sm \A) \cap \G$ 
and $A \in (\A \sm \B) \sm \G$. Such sets must satisfy: 
\begin{enumerate}
\item[(a)] $C_{U,V}(A) \in (\B \sm \A) \cap \tbinom {[n]}{k+1}$ 
and $A \in \tbinom {[n]}{k} \sm \tbinom {S}{k}$, or 
\item[(b)] $C_{U,V}(A) \in (\B \sm \A) \cap \tbinom {S}{k}$ 
and $A \in (\A \sm \B) \cap \tbinom {[n]}{k-1}$.
\end{enumerate}
We write $\mc{T}_a$ or $\mc{T}_b$ for the families
of type (a) or (b) sets as above. When bounding $\mc{T}_a$,
it will be more convenient to bound
$\D := \tbinom{[n]}{\geq k+1} \sm \A$, noting that 
\[ \mc{T}_a \sub \D \sub \mc{T}_a \cup
(\tbinom{[n]}{\geq k+1} \sm \B).\] 

We divide the remainder of the proof into
cases according to the size of $S$.
We start with the case $|S| \leq n-3$.
As $|U| + 1 = |V|$ we have
$\big | |A| - |C_{U,V}(A)| \big | \leq 1$
for any set $A$, so 
\[ |\A \sm \tbinom{[n]}{\geq k+1}| \leq 
|\B\sm \tbinom{[n]}{\geq k+1}| + \tbinom {n-|U|-|V|}{k+1-|V|} 
\leq \tbinom {|S|}{k} + \dD \tbinom {|S|-3}{k-2} + \tbinom {n-3}{k-1}  
\leq \tbinom {n-1}{k} - \dD \tbinom {|S|-3}{k-2},\] 
as $\dD < \tfrac {1}{2}$. By Lemma \ref{vbcorrection1} we deduce
$|\mc{T}_a| \le |\D| \leq \dD \tbinom {|S|-3}{k-2}$.

To bound type (b) sets, we define an injection from
$\mc{T}_b$ to $\A \cap \big ( \tbinom {[n]}{k} \sm \tbinom {S}{k} \big )$
by $A \mapsto A+s$, for some fixed $s \in [n]$ with
$s \in S^c$ if $U \subset S$ or $s \in V$ if $U \not \subset S$. 
To see that this map is well-defined on $A \in \mc{T}_b$, 
note that $A+s \in \A$ as $\A$ is an upset, and $s \notin A$ using 
$A \subset C_{U,V}(A) \cup U \subset S$ if $U \subset S$ 
or $A \cap V = \es$ if $U \not \subset S$. We also note that
\[ |\A \cap \big ( \tbinom {[n]}{k} \sm \tbinom {S}{k} \big )|
\le |\D| + |\B \sm \G|, \]
as if $A \in \A \cap \big ( \tbinom {[n]}{k} \sm \tbinom {S}{k} \big )$
we have $A \in \B \sm \G$ or $C_{U,V}(A) \in \D$.
We deduce $|\mc{T}_b| \le |\D| + |\B \sm \G| 
\le 2 \dD \tbinom {|S|-3}{k-2}$, so
$|\A \sm \G| - |\B \sm \G| \leq 3\dD  \tbinom {|S|-1}{k-1}$,
giving $|\A \triangle \G| \le 8\dD \tbinom {|S|-3}{k-2}$.
Lemma \ref{localstabcor} improves this to the required bound
$|\A \triangle \G| \leq \dD \tbinom {|S|-3}{k-3}$,
which completes the proof if $|S| \le n-3$.		
			
Henceforth we can assume $|S| \in \{n-2,n-1\}$.
Next we consider the case $U \cap S^c \neq \es$.
As $U \cap V = \es$ we have $|V \cap S^c| \le 1$.
We start by bounding type (a) sets according
to the two subcases $|V \cap S^c| = 0,1$.
First we consider the subcase $V \cap S^c = \{v\}$,
in which case we can define an injection from $\mc{T}_a$
to $\tbinom{S}{k} \sm \B$ by $A \mapsto C_{U,V-v}(A)$.
Indeed, as $U \sub A$ and $A \cap V = \es$ we have 
$C_{U,V-v}(A) \in \tbinom{S}{k}$. Furthermore,
$C_{U,V-v}(A) \notin \B$, as otherwise 
$C_{U,V-v}(A) \in \A$ but $C_{U,V}(A) \notin \A$,
which contradicts $\A$ being an upset.
We deduce $|\mc{T}_a| \le |\tbinom{S}{k} \sm \B|
\le \dD \tbinom {|S|-3}{k-3}$ 
in the subcase $|V \cap S^c|=1$.

Now consider the subcase $V \cap S^c = \es$.
The same argument as in the previous subcase
(using any $v \in V$) bounds the number of
$A \in \mc{T}_a$ with $C_{U,V}(A) \sub S$.
This accounts for all type (a) sets if $|S|=n-1$.
If $|S|=n-2$ then any further sets $A \in \mc{T}_a$ 
contain $S^c$, so number at most
$\tbinom{n-|V|-|U|-1}{k-|U|-1}
\le \tbinom{n-4}{k-2}$. We deduce
$|\mc{T}_a| \le \tbinom{n-4}{k-2} 
+ \dD \tbinom {|S|-3}{k-3}$,
so $|\D| \le |\mc{T}_a| + \dD \tbinom {|S|-3}{k-3}
\le \tbinom{n-4}{k-2} + 2\dD \tbinom {|S|-3}{k-3}
\le \tbinom {n-1}{k-1} - \dD \tbinom {|S|-3}{k-2}$
as $\dD < \tfrac {1}{3}$. By Lemma \ref{vbcorrection1} we deduce
$|\mc{T}_a| \le |\D| \leq \dD \tbinom {|S|-3}{k-2}$,
thus bounding type (a) sets in both subcases.

Now we can bound type (b) sets by the same argument
as in the case $|S| \le n-3$, using an injection
$\mc{T}_b \to \A \cap ( \tbinom {[n]}{k} \sm \tbinom {S}{k})$
defined by $A \mapsto A+v$ for any fixed $v \in V$.
To see that this is well-defined on $A \in \mc{T}_b$,
note that $v \notin A$ as $C_{U,V}(A) \ne A$,
and that $U \sub A \not \sub S$. The remainder 
of the proof follows as in the previous case,
so henceforth we can assume $|S| \in \{n-2,n-1\}$
and $U \cap S^c = \es$.

We can assume $S^c \not \sub V$, as otherwise 
$|A \triangle \G| = |\B \triangle \G| 
\leq \dD \tbinom {|S|-1}{k-1}$.	
To see this, note that $C_{U,V}(A) = A$ for any
$A \in \tbinom {[n]}{k} \sm \tbinom {S}{k}$ 
as $A \cap V \neq \es $, and that no 
$A \in \A \cap \tbinom {[n]}{k-1}$ has 
$C_{U,V}(A) \in \tbinom {S}{k}$, as $V \cap S^c \neq \es$.

Without loss of generality, $n \in S^c \sm V$.
As in \eqref{eqsec} we use superscripts $0$ and $1$
to denote the sections of a family in direction $n$. 
Note that $A$ and $C_{U,V}(A)$ belong to the same section
for any set $A$, as $n \notin U \cup V$.
This gives $|\A^1| = |\B^1| = \tbinom {n-1}{\geq k} 
\pm \dD \tbinom {|S|-3}{k-2}$ and
$|\A^0| = |\B^0| = \tbinom {n-1}{\geq k+1} 
+ \tbinom {|S|}{k} \pm \dD \tbinom {|S|-1}{k-1}
\geq \tbinom {n-1}{\geq k+1} 
+ \tbinom {x}{k} - 2\dD \tbinom {x-1}{k-1}
\ge \tbinom {n-1}{\geq k+1} + \tbinom {x-1}{k}$. 
Note that if $k = 2$ we have
$|{\cal A}^1| = \binom {n-1}{\geq 2}$.
Furthermore, as $\A$ is an upset we have
$\A^0 \sub \A^1$, so $|\A^0| \le |\A^1|$.
Lemma \ref{sections} therefore gives $|\pl_v^{(n-1)}(\A^1)| \leq
 \LovDim {n-1}{\A ^1} + \tfrac {ck(x-k)}{x^3} \tbinom {x}{k-1}
\le  \LovDim {n-1}{\A ^1} + \tfrac {c}{n-1} \tbinom {n-1}{k-1}$. 
Then Theorem \ref{thm: ball-sized stability} gives 
$|\A^1 \triangle {\cal H}| \leq \dD \tbinom {n-2}{k-1}$ 
for some Hamming ball ${\cal H} \subset \{0,1\}^{n-1}$,
and Lemma \ref{vbcorrection2} improves this to
$|\A^1 \triangle {\cal H}| \leq \dD \tbinom {|S|-3}{k-2}$.
As $\A^1$ is an upset, ${\cal H} = \tbinom {n-1}{\geq k}$. 
 
In particular, the number of type (a) and type (b) sets 
containing $n$ are both bounded by $\dD \tbinom {|S|-3}{k-2}$.
As $\A^0 \subset \A^1$ we have  
$|\A^0 \sm {\cal H}| \leq \dD \tbinom {|S|-1}{k-1}$. 
In particular, this bounds type (b) sets in $\A^0$. If $|S|=n-1$ 
then $|\A \sm \G| = |\A^0\sm {\cal H}| + |\A^1\sm {\cal H}| 
\leq 2 \dD  \tbinom {|S|-1}{k-1}$, and Lemma \ref{localstabcor}
improves this to $|\A \triangle \G| \leq \dD \tbinom {|S|-3}{k-2}$.

Finally, we consider $|S|=n-2$ and bound type (a) sets in $\A^0$.
We write $[n-1] \sm S = \{v\}$ and define an injection 
$A \mapsto C_{U,V}(A) - v$ from $\mc{T}_a \cap \A^0$ to 
$(\tbinom{S}{k} \sm \B) \cup C_{U,V}(\A^0 \sm \mc{H})$.
To see that this is well-defined, first note that
$A \in \tbinom{[n-1]}{k}$ and $C_{U,V}(A) \ne A$,
so $v \in A \sm U$ and $v \in C_{U,V}(A) \in \B \sm \A$.
As $\A$ is an upset, $C_{U,V}(A)-v \in \tbinom{S}{k} \sm \A$.
If $C_{U,V}(A)-v \notin \tbinom{S}{k} \sm \B$ then
$C_{U,V}(A-v) = C_{U,V}(A)-v \in \B \sm \A$,
so $A-v \in \A^0 \sm \mc{H}$. We deduce $|\mc{T}_a \cap \A^0| 
\le |\tbinom{S}{k} \sm \B| + |\A^0 \sm \mc{H}| 
\le 2 \dD \tbinom {|S|-3}{k-2}$.
Altogether, $|\A \sm \G| - |\B \sm \G|
\le 5 \dD \tbinom {|S|-3}{k-2}$,
so $|\A \triangle \G| \le 12 \dD \tbinom {|S|-3}{k-2}$,
and Lemma \ref{localstabcor} improves this to 
$|\A \triangle \G| \leq \dD \tbinom {|S|-3}{k-2}$.
\end{proof}

\subsection{Decompressing general sets}					
			
In this subsection we prove that if $\A$ 
has small vertex boundary and $C_{\es ,\{i\}}(\A)$ 
is close to a generalised Hamming ball then so is $\A$.
Without loss of generality we take $i=n$.
First we show that the size of the intersection
of two Hamming balls is a non-increasing function
of the distance between their centres.
At first, this may sound too obvious to need a proof,
but perhaps surprisingly, if $t$ is odd then increasing
the distance from $t$ to $t+1$ makes no difference
to the intersection size.

\begin{lem} \label{monotone}
Let $f_t(n,k) = |\B^n_{n-k}(C) \cap \B^n_{n-k}(C')|$ 
where $|C \triangle C'|=t$. 
Let $\mc{D}_t(n,k) = 
\{A \sub [n-1]: |A|=|A \triangle [t]|=k-1\}$.
Then $f_t(n,k) - f_{t+1}(n,k) = |\mc{D}_t(n,k)|$.
\end{lem}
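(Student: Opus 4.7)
The plan is to assume without loss of generality that $C = \es$ and $C' = [t]$ (via an automorphism of $Q_n$), and to compare against the center $C'' = [t+1]$. Writing $\F_s = \{A \sub [n] : |A| \le n-k,\ |A \triangle [s]| \le n-k\}$ for $s \in \{t, t+1\}$, we have $f_s(n,k) = |\F_s|$. The key elementary observation is that for every $A \sub [n]$, $|A \triangle [t+1]| = |A \triangle [t]| - 1$ if $t+1 \in A$ and $|A \triangle [t]| + 1$ otherwise, since $t+1 \notin [t]$.

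A short case analysis using this identity yields
\[ \F_t \sm \F_{t+1} = \{A \sub [n]\sm\{t+1\} : |A| \le n-k,\ |A \triangle [t]| = n-k\} \]
and
\[ \F_{t+1} \sm \F_t = \{A \sub [n] : t+1 \in A,\ |A| \le n-k,\ |A \triangle [t]| = n-k+1\}. \]
Next I would verify that $A \mapsto A \cup \{t+1\}$ restricts to a bijection from $\{A \in \F_t \sm \F_{t+1} : |A| \le n-k-1\}$ onto $\F_{t+1} \sm \F_t$, with inverse $B \mapsto B \sm \{t+1\}$; this uses that toggling $t+1$ in a set changes both its size and its symmetric difference with $[t]$ simultaneously by $\pm 1$. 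Consequently,
\[ f_t(n,k) - f_{t+1}(n,k) = |\F_t \sm \F_{t+1}| - |\F_{t+1} \sm \F_t| = |\{A \sub [n]\sm\{t+1\} : |A| = n-k,\ |A \triangle [t]| = n-k\}|. \]

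Finally, I would identify this last set bijectively with $\D_t(n,k)$ by complementing in $[n]\sm\{t+1\}$. Given such $A$, set $B = ([n]\sm\{t+1\})\sm A$; writing $a = |A \cap [t]|$ and $c = |A \cap \{t+2,\dots,n\}|$, the two constraints $a+c = n-k$ and $(t-a)+c = n-k$ force $a = t/2$ (so both sides of the claimed identity vanish for odd $t$, matching the non-monotonicity mentioned above the lemma). A direct computation then gives $|B| = t/2 + (k-1-t/2) = k-1$ and $|B \triangle [t]| = (t-t/2) + (k-1-t/2) = k-1$. Composing with the order-preserving identification $[n]\sm\{t+1\} \to [n-1]$ (which fixes $[t]$ pointwise and hence preserves both $|\cdot|$ and symmetric differences with $[t]$) places the image in $\D_t(n,k)$, and the map is manifestly reversible. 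The whole argument is routine bookkeeping; the only mildly subtle point is ensuring that the injection $A \mapsto A \cup \{t+1\}$ in the second step fails to extend exactly on sets with $|A| = n-k$, which is precisely what produces the claimed difference.
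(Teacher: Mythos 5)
Your proof is correct and follows essentially the same argument as the paper. The only difference is cosmetic: you work with centres $\es$ and $[t]$, whereas the paper uses the complementary centres $[n]$ and $[n]\sm[t]$; this means the paper's sets $\mc{X}'$ and $\mc{X}$ are precisely the complements (in $[n]$) of your $\F_t\sm\F_{t+1}$ and $\F_{t+1}\sm\F_t$, its injection $A\mapsto A\cup\{t+1\}$ is the same move you make, and its final identification with $\mc{D}_t(n,k)$ is just $A\mapsto A-(t+1)$ plus relabelling $[n]\sm\{t+1\}\cong[n-1]$, while yours additionally complements in $[n]\sm\{t+1\}$. Both routes are fine; the paper's choice avoids the last complementation and the parity computation, but your version of the parity count is a nice way to make the odd-$t$ degeneracy explicit.
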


\begin{proof}
We write $f_t(n,k) - f_{t+1}(n,k)
= |\B^n_{n-k}([n]) \cap \B^n_{n-k}([n] \sm [t])|
- |\B^n_{n-k}([n]) \cap \B^n_{n-k}([n] \sm [t+1])|
= |\mc{X}'|-|\mc{X}|$, where 
$\mc{X}' = |\{A' \sub [n]: |A'| \ge k,
 |A' \triangle [t]| = k,
 |A' \triangle [t+1]| = k-1\}|$ and $\mc{X} = |\{A \sub [n]: |A| \ge k,
 |A \triangle [t+1]| = k,
 |A \triangle [t]| = k-1\}|$.
Every set $A \in \mc{X}$ does not contain $t+1$,
and adding $t+1$ gives a set $A' \in \mc{X}'$.
The map $A \mapsto A \cup \{t+1\}$ is injective, 
so $|\mc{X}'|-|\mc{X}|$ is the number of sets in $\mc{X}'$ 
not in the image, i.e.\ $|\mc{X}'|-|\mc{X}| 
= |\{A: t+1 \in A, |A|=|A \triangle [t]|=k\}| = |\mc{D}_t(n,k)|$.
\end{proof}
 
Now we come to the main lemma of this subsection.

\begin{lem} \label{lem: gen ball approx}
Let $\dD \in (0,1)$, $c = 10^{-9}\dD$
and $\A \subset \{0,1\}^n$ with 
$|\A| = \tbinom {n}{\geq k+1} + \tbinom {x}{k}$ 
and $|\pl_v(\A)| \leq \Lov {\A } 
+ \tfrac {c k(x-k)}{x^3} \tbinom {x}{k-1}$, 
where $\tbinom {x}{k} = \tbinom {|S|}{k} 
\pm \tfrac {\dD }{8} \tbinom {|S|-3}{k-2}$ 
for some $|S| \in [k,n-1]$ with $k\geq 2$. 
Suppose $\B := C_{\es,\{n\}}(\A)$ 
satisfies $|\B \triangle \G| \leq \dD \tbinom {|S|-3}{k-2}$ 
for some generalised Hamming ball $\G$ with 
$|\G| = \tbinom {n}{\geq k+1} + \tbinom {|S|}{k}$. 
Then $|\A \triangle \G'| \leq \dD \tbinom{|S|-3}{k-2}$ 
for some generalised Hamming ball $\G'$.
\end{lem}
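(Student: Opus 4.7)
The plan is to analyse the sections $\A^0, \A^1$ of $\A$ in direction $n$ (as in (\ref{eqsec})) and identify an appropriate generalised Hamming ball $\G'$ from the structure of $\G$ together with the reverse-compression behaviour. Write $\B^0, \B^1$ and $\G^0, \G^1$ for the corresponding sections of $\B$ and $\G$. Since $\B = C_{\es, \{n\}}(\A)$, a direct check gives $\B^0 = \A^0 \cap \A^1$ and $\B^1 = \A^0 \cup \A^1$; equivalently, writing $\mc{E} := \A^0 \sm \A^1 \sub \B^1 \sm \B^0$, we have $\A^0 = \B^0 \cup \mc{E}$ and $\A^1 = \B^1 \sm \mc{E}$. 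Applying the reflection $A \mapsto A \triangle \{n\}$ if necessary, which maps $\G$ to a reflected generalised Hamming ball and preserves all hypotheses, we may assume $|\A^1| \ge |\A^0|$.

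The next step will be to apply Lemma \ref{sections} to $\A$. The required lower bound $|\A^0| \ge \tbinom{n-1}{\ge k+1} + \tbinom{x-1}{k}$ follows from $|\A^0| \ge |\B^0| \ge |\G^0| - \dD \tbinom{|S|-3}{k-2}$, using that $|\G^0| \ge \tbinom{n-1}{\ge k+1} + \tbinom{|S|-1}{k}$ for every generalised Hamming ball $\G$ of the given size. Lemma \ref{sections} then provides the dichotomy that $|\A^0|$ equals, up to error $\dD \tbinom{x-1}{k-1}$, either $\tbinom{n-1}{\ge k+1} + \tbinom{x-1}{k}$ (Case~I) or $\tbinom{n-1}{\ge k+1} + \tbinom{x}{k}$ (Case~II), together with the section-wise bound $|\pl_v^{(n-1)}(\A^j)| \le \LovDim{n-1}{\A^j} + \tfrac{ck(x-k)}{x^3} \tbinom{x}{k-1}$ for $j \in \{0, 1\}$.

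In Case~I, the inequality $|\B^0| \le |\A^0|$ rules out every form of $\G$ with $n$ outside its centre, since otherwise $|\G^0| \approx \tbinom{n-1}{\ge k+1} + \tbinom{|S|}{k}$ would exceed $|\A^0|$. In the remaining configurations $|\B^0| \approx |\A^0|$, so $|\mc{E}|$ is small; a direct comparison bounds $|\A \triangle \G|$ moderately, and Lemma \ref{localstabcor} sharpens this to $\dD \tbinom{|S|-3}{k-2}$, with $\G' := \G$. In Case~II, $|\A^1| \approx \tbinom{n-1}{\ge k}$ is ball-sized in $\{0,1\}^{n-1}$, so applying Theorem \ref{thm: ball-sized stability} to $\A^1$, followed by Lemma \ref{vbcorrection2} for sharpening, yields a Hamming ball $\mc{H} \sub \{0,1\}^{n-1}$ with $|\A^1 \triangle \mc{H}| \le \dD \tbinom{|S|-3}{k-2}$. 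Since $\A^1 \sub \B^1 \approx \G^1$ has essentially no sets of size less than $k-1$, $\mc{H}$ must be centred at $[n-1]$, so $\mc{H} = \tbinom{[n-1]}{\ge k}$. Then $\mc{E} = \B^1 \sm \A^1$ is approximately $\G^1 \sm \tbinom{[n-1]}{\ge k}$: this is empty when $\G$ has $n$ outside its centre (take $\G' := \G$); equals $\tbinom{T \sm \{n\}}{k-1}$ when $\G = \tbinom{[n]}{\ge k+1} \cup \tbinom{T}{k}$ with $n \in T$ (take the complementary type-$2$ ball $\G' := \tbinom{[n]}{\ge k+1} \cup \tbinom{T \sm \{n\}}{k} \cup \tbinom{T \sm \{n\}}{k-1}$); or is an analogous configuration when $\G$ is of type $2$ with $n$ in its centre. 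In each sub-case, a final application of Lemma \ref{localstabcor} tightens the resulting bound on $|\A \triangle \G'|$ to $\dD \tbinom{|S|-3}{k-2}$.

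The hardest step will be pinning down $\mc{H} = \tbinom{[n-1]}{\ge k}$ in Case~II: we must exploit that $\A^1 \sub \B^1 \approx \G^1$ has essentially no small sets to rule out every Hamming ball other than the one centred at $[n-1]$, since any alternative centre would introduce either too many small sets into $\mc{H}$ or a systematic exclusion of large ones, inconsistent with $|\A^1 \triangle \mc{H}| \le \dD \tbinom{|S|-3}{k-2}$ and the structure of $\B^1$.
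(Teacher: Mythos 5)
Your high-level set-up matches the paper's: using $\B^0 = \A^0 \cap \A^1$, $\B^1 = \A^0 \cup \A^1$, assuming $|\A^0| \le |\A^1|$ by symmetry, ruling out the type where $|\G^0 \sm \G^1|$ is large via $\B^0 \sub \B^1$, and applying Lemma \ref{sections} to obtain the two-size dichotomy for $|\A^0|$. The Case~I analysis (with $\G' := \G$ and Lemma \ref{localstabcor} to tighten) is essentially the paper's ``same type'' case. However, there is a genuine gap in Case~II: your argument that $\mc{H} = \tbinom{[n-1]}{\ge k}$ is not sound. The constraint that $\A^1 \sub \B^1 \approx \G^1$ has essentially no sets of size $< k-1$ only rules out Hamming balls whose centre is at distance $\ge 2$ from $[n-1]$. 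It does \emph{not} rule out a ball $\B^{n-1}_{n-1-k}(C)$ with $|C| = n-2$, since such a ball contains exactly $\tbinom{C}{k-1}$ sets of size $k-1$ and no smaller sets. When $\G$ is of type $\G_1 = \tbinom{[n]}{\ge k+1} \cup \tbinom{S}{k}$ with $n \in S$ and $|S|$ close to $n-1$, we have $\G^1 = \tbinom{[n-1]}{\ge k} \cup \tbinom{S \sm \{n\}}{k-1}$ containing nearly all size-$(k-1)$ sets, and the ``systematic exclusion'' of size-$k$ sets from a distance-$1$ ball can be entirely absorbed by $\A^0$ (since $\B^1 = \A^0 \cup \A^1$), so nothing in your reasoning prevents $\mc{H}$ from being such a distance-$1$ ball.

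The paper handles precisely this by splitting Case~II further according to whether $\tbinom{|S|-1}{k-1}$ is bounded away from $\tbinom{n-2}{k-1}$. When it is, your argument for $\mc{H} = \tbinom{[n-1]}{\ge k}$ can be completed using Lemma \ref{monotone}; but when $|S|$ is close to $n-1$, the paper applies Theorem \ref{thm: ball-sized stability} to \emph{both} sections $\A^0, \A^1$, obtains two Hamming balls, shows via Lemma \ref{monotone} that their centres must be at distance exactly $1$, and constructs $\G'$ from that pair. Your sketch does not anticipate this sub-case, and the ``analogous configuration'' you describe for $\G$ of type $\G_2$ with $n$ in its centre should actually be a contradiction (the paper rules that type out in the different-type case), which is a further discrepancy. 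You would need to add the $|S| \approx n-1$ sub-case analysis, together with the monotonicity lemma for the intersection of two Hamming balls, to complete the argument.
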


\begin{proof}
First we note that the lemma is trivial for $k \ge n-1$,
so we can assume $k \le n-2$.
By applying an automorphism of the cube,
we may assume $\G = \G_1 = \tbinom {[n]}{\geq k+1} \cup \tbinom {S}{k}$ 
or $\G = \G_2 = \tbinom {n}{\geq k+1} \cup 
\tbinom {S'}{k} \cup \tbinom {S'}{k-1}$ with $|S| = |S'| + 1$. 
These two cases are in turn each split into two subcases
according to whether $n$ belongs to $S$ or $S'$, denoted
by superscripts as in \eqref{eqsec}, as follows:
\begin{itemize}
\item[({a})]  
$\G_{1}^0 = \tbinom {[n-1]}{\geq k+1} \cup \tbinom {S}{k}$ and 
$\G^1_{1} = \tbinom {[n-1]}{\geq k+1} \cup \tbinom {[n-1]}{k}$,
where $n \notin S$; 
\item[({b})]
$\G_{1}^0 = \tbinom {[n-1]}{\geq k+1} \cup \tbinom {S'}{k}$ and 
$\G_{1}^1 = \tbinom {[n-1]}{\geq k+1} \cup 
 \tbinom {[n-1]}{k}\cup \tbinom {S'}{k-1}$, 
where $S = S' \cup \{n\}$;
\item[({c})]  
$\G_{2}^0 = \tbinom {[n-1]}{\geq k+1} 
 \cup \tbinom {S'}{k} \cup \tbinom {S'}{k-1}$ and 
$\G^1_{2} = \tbinom {[n-1]}{\geq k+1}  \cup \tbinom {[n-1]}{k}$,
where 	$n \notin S'$; 
\item[({d})]
$\G_{2}^0 = \tbinom {[n-1]}{\geq k+1} \cup 
 \tbinom {S'' }{k} \cup \tbinom {S'' }{k-1}$ and 
$\G_{2}^1 = \tbinom {[n-1]}{\geq k+1} \cup \tbinom {[n-1]}{k} 
 \cup \tbinom {S''}{k-1} \cup \tbinom {S'' }{k-2}$,
 where $S' = S'' \cup \{n\}$.
\end{itemize}
A family is of type (a) if 
it can be approximated up to $\dD \tbinom {|S|-3}{k-2}$ 
elements by a family {isomorphic} to (a), 
and similarly for type (b), (c), (d). 
Some case-checking shows that then the type and the associated set 
$S$, $S'$ or $S''$ are unique (which we omit,
as we do not use this fact in the proof).
We let $\G^0$ and $\G^1$ denote the appropriate
families for the approximation of $\B$.

As $\B = C_{\es,\{n\}}(\A)$, we note that $\A$ and $\B$
are related by the `intersection-union transformation'
\[ \B^0 = \A^0 \cap A^1 \quad \text{ and } \quad \B^1 = \A^0 \cup A^1. \]
In particular, $\B^0 \sub \B^1$, so $\B$ cannot be of type (c),
which has $|\G^0 \sm \G^1| = \tbinom{|S|-1}{k-1} 
> \dD \tbinom{|S|-3}{k-2} \ge |\B \triangle \G|$.
By possibly swapping $\A^0$ and $\A^1$
we can assume $|\A^0| \le |\A^1|$; indeed,
this does not affect $\B^0$ and $\B^1$,
and any approximation for the `swapped' family
gives one for $\A$, via the automorphism of the cube
that swaps $0$ and $1$ in coordinate $n$.
We claim that the sections of $\A$ have just two possible
types of approximate sizes, namely
\begin{enumerate}[(i)]
\item $|\A^0| = \tbinom {n-1}{\geq k+1} + \tbinom {|S|}{k} 
\pm \dD \tbinom {|S|-3}{k-2}$ 
and $|\A^1| = \tbinom {n-1}{\geq k+1} + \tbinom {n-1}{k} \pm 
\dD \tbinom {|S|-3}{k-2}$, or
\item $|\A^0| = \tbinom {n-1}{\geq k+1} + \tbinom {|S|-1}{k} 
\pm \dD \tbinom {|S|-3}{k-2}$ and 
$|\A^1| = \tbinom {n-1}{\geq k+1} + \tbinom {n-1}{k} 
+ \tbinom {|S|-1}{k-1} \pm \dD \tbinom {|S|-3}{k-2}$.			
\end{enumerate}
To see this claim, first note that 
\[ |\A^0| \ge |\B^0| 
\ge |\G^0|-\dD \tbinom {|S|-3}{k-2}
\ge \tbinom {n-1}{\geq k+1} + \tbinom {|S|-1}{k} 
- \dD \tbinom {|S|-3}{k-2}.\]
If (ii) does not hold then 
$|\A^0| > \tbinom {n-1}{\geq k+1} + \tbinom {|S|-1}{k} 
+ \dD \tbinom {|S|-3}{k-2} > \tbinom {n-1}{\geq k+1} 
+ \tbinom {x-1}{k} + \tfrac{\dD}{2} \tbinom {x-3}{k-2}$
(the latter by Lemma \ref{shiftapprox}), so (i) holds
by Lemma \ref{sections} 
(applied with $\tfrac{\dD}{2}$ in place of $\dD$).

We now consider separate cases according to whether the type 
of the sizes of the sections of $\A$ is the same as that of $\B$.
Suppose first that $|\A^0| = |\G^0| \pm \dD \tbinom {|S|-3}{k-2}$
(which is the same estimate that we know for $|\B^0|$).
Then $|\A^1| = |\G^1| \pm 2\dD \tbinom {|S|-3}{k-2}$.
As $\B^0 \sub \A^0$ we have 
$|G^0 \sm \A^0| \le |\G^0 \sm \B^0| \le \dD \tbinom {|S|-3}{k-2}$,
so $|\G^0 \triangle \A^0| \le 2|\G^0 \sm \A^0| 
+ ||\A^0|-|\G^0|| \le 3 \dD \tbinom {|S|-3}{k-2}$.
Similarly, $|A^1 \sm \G^1| \le |\B^1 \sm \G^1| 
\le \dD \tbinom {|S|-3}{k-2}$, 
so $|\G^1 \triangle \A^1| \le
2|A^1 \sm \G^1| + ||\A^1|-|\G^1|| 
\le 4 \dD \tbinom {|S|-3}{k-2}$.
We deduce $|\A \triangle \G| \le 7\dD \tbinom {|S|-3}{k-2}$.
Lemma \ref{localstabcor} improves this to
$|\A \triangle \G| \le \dD \tbinom {|S|-3}{k-2}$,
so $\A$ has the same type as $\B$,
and the proof is complete in this case.

It remains to consider the case 
$|\A^0| \notin |\G^0| \pm \dD \tbinom {|S|-3}{k-2}$,
i.e.\ the sizes of the sections of $\A$ 
are of the opposite type to those of $\B$.
Here we note that $\B$ must be of type (b) or (d).
Indeed, we have already noted that (c) is impossible,
and type (a) falls into the previous case,
as $|\A^0| \ge |\G^0| - \dD \tbinom {|S|-3}{k-2}
= \tbinom {n-1}{\geq k+1} + \tbinom {|S|}{k} 
- \dD \tbinom {|S|-3}{k-2}
> \tbinom {n-1}{\geq k+1} + \tbinom {|S|-1}{k} 
+ \dD \tbinom {|S|-3}{k-2}$.
Thus $\B$ has section sizes of type (ii)
and $\A$ has sections sizes of type (i).

By Lemma \ref{sections}, 
$|\pl^{(n-1)}_v(\A^j)| \le \LovDim{n-1}{\A^j}
+ \tfrac {c k(x-k)}{x^3} \tbinom {x}{k-1}$ for $j=0,1$.
As $|\A^1| = \tbinom {n-1}{\geq k} \pm \dD \tbinom{|S|-3}{k-2}$ 
(giving $|\A^1| = \tbinom {n-1}{\geq k}$ if $k=2$),
by Theorem \ref{thm: ball-sized stability} 
we have $|\A^1 \triangle \mc{H}^1| < 5\dD \tbinom{|S|-3}{k-2}$
for some Hamming ball $\mc{H}^1$ in $\{0,1\}^{n-1}$
of size $\tbinom {n-1}{\geq k}$. Now we see that $\B$ 
cannot be of type (d), as this would give
$|\G \sm \B| \ge |\G^0_2 \sm \A^1| \ge \tbinom{|S|-2}{k-1}
- 5\dD \tbinom{|S|-3}{k-2} > \dD \tbinom{|S|-3}{k-2}$, contradiction.
Thus $\B$ has type (b), i.e.\ 
$|\B \triangle \G_1| \le \dD \tbinom{|S|-3}{k-2}$
with $\G_{1}^0 = \tbinom {[n-1]}{\geq k+1} \cup \tbinom {S'}{k}$ 
and $\G_{1}^1 = \tbinom {[n-1]}{\geq k+1} \cup 
 \tbinom {[n-1]}{k}\cup \tbinom {S'}{k-1}$, 
where $S = S' \cup \{n\}$.

Next we consider the subcase that
$\tbinom{|S|-1}{k-1} \le \tbinom{n-2}{k-1} - 7\dD \tbinom{n-4}{k-2}$.
We must have $\mc{H}^1 = \tbinom {[n-1]}{\geq k}$, as otherwise
by Lemma \ref{monotone} we get $|\B^1 \sm \G^1| \ge |\A^1 \sm \G^1|
\ge \tbinom{n-2}{k-1} - \tbinom{|S|-1}{k-1} - 5\dD \tbinom{|S|-3}{k-2}
\ge 2 \dD \tbinom{|S|-3}{k-2}$, contradiction.
As $\A^0 \sm \mc{H}^1 \sub \B_1 \sm \mc{H}^1$
we have $|\A^0 \sm \G^1_1| \le \dD \tbinom{|S|-3}{k-2}$,
and as $\mc{H}^1 \sm \A^0 \sub \mc{H}^1 \sm \B^0$
we have $|\G^0_1 \sm \A^0| \le \dD \tbinom{|S|-3}{k-2}$.
Then with $\G_{2}^0 = \tbinom {[n-1]}{\geq k+1} 
 \cup \tbinom {S'}{k} \cup \tbinom {S'}{k-1}$ as in (c)
we have $|\A^0 \triangle \G_2^0| \le 5 \dD \tbinom{|S|-3}{k-2}$
(using $|\A^0|=|\G_2^0| \pm \dD \tbinom {|S|-3}{k-2}$)
so $|\A \triangle \G_2| \le 10 \dD \tbinom{|S|-3}{k-2}$.
Lemma \ref{localstabcor} improves this to
$|\A \triangle \G_2| \le \dD \tbinom {|S|-3}{k-2}$,
so $\A$ has type (c), which completes the proof of this subcase.

It remains to consider the subcase that
$\tbinom{|S|-1}{k-1} > \tbinom{n-2}{k-1} - 7\dD \tbinom{n-4}{k-2}
= (1 - \tfrac{7\dD(k-1)(n-k-1)}{(n-2)(n-3)}) \tbinom{n-2}{k-1}$.
By Lemma \ref{shiftapprox} we have $\tbinom{|S|}{k-1} > 
(1 - \tfrac{7\dD(k-1)(n-k-1)}{(n-2)(n-3)}) \tbinom{n-1}{k-1}
> \tbinom{n-1}{k-1} - 7\dD \tbinom {n-3}{k-2}$.
Then $|\A^1| =\tbinom {n-1}{\geq k} \pm \dD \tbinom {n-3}{k-2}$
and $|\A^0| = \tbinom {n-1}{\geq k} \pm 7\dD \tbinom {n-3}{k-2}$.
By Lemma \ref{sections}, 
$|\pl^{(n-1)}_v(\A^j)| \le \LovDim{n-1}{\A^j}
+ \tfrac {c k(x-k)}{x^3} \tbinom {x}{k-1}$ for $j=0,1$,
so by Theorem \ref{thm: ball-sized stability} 
we have $|\A^1 \triangle \mc{H}^1| < 5\dD \tbinom{n-3}{k-2}$
and $|\A^0 \triangle \mc{H}^0| < 35\dD \tbinom{n-3}{k-2}$
for some Hamming balls $\mc{H}^0,\mc{H}^1$ in $\{0,1\}^{n-1}$
both of size $\tbinom {n-1}{\geq k}$.
Note that $\mc{H}^0 \ne \mc{H}^1$, as otherwise
we would be in our previous case where $\A$ and $\B$
have the same type of section sizes.

Next we claim that
the centres of $\mc{H}^0$ and $\mc{H}^1$
cannot be at distance more than $1$ apart.
To see this, first note that either centre is at distance
at most $2$ from $[n]$, as otherwise by Lemma \ref{monotone} 
we get $|\B^1 \sm \tbinom {[n-1]}{\geq k}|
\ge \tbinom{n-2}{k-1} + 2 \tbinom{n-3}{k-2}$,
so $|\B^1 \sm \G^1|\ge (2-\dD) \tbinom{n-3}{k-2}$, contradiction.
Furthermore, we cannot have either centre at distance
exactly $2$ from $[n]$, say $\mc{H}^i=B^{n-1}_{n-k-1}([n-2])$,
as then $\tbinom {[n-1]}{\geq k} \sm \mc{H}^i$
contains $\{A \sub [n-1]: |A|=k+1, \{n-1,n-2\} \sub A\}$
of size $\tbinom{n-3}{k-1} \ge \tbinom{|S|-3}{k-2}$,
so $|\G^0 \sm \B^0|\ge (1-\dD) \tbinom{|S|-3}{k-2}$, contradiction.
It remains to rule out two centres of size $n-1$,
say $\mc{H}^i=B^{n-1}_{n-k-1}([n-1] \sm \{x_i\})$ for $i=0,1$.
In this case $\mc{H}^0 \cup \mc{H}^1$ has no sets of size $k-2$,
which rules out $\B$ of type (d), which has $\tbinom{|S|-2}{k-2}
> \tbinom{|S|-3}{k-2}$ such sets. Also, $\mc{H}^0 \cap \mc{H}^1$ 
contains all sets of size $k-1$ disjoint from $\{x_0,x_1\}$;
there are $\tbinom{n-3}{k-1} \ge \tbinom{|S|-3}{k-2}$ such sets,
which rules out $\B$ of type (b), and so proves the claim.

We conclude that the centres of $\mc{H}^0$ and $\mc{H}^1$ are at distance 1.
Let $\mc{H} \sub \{0,1\}^n$ have sections $\mc{H}^0,\mc{H}^1$.
Then $\mc{H}$ is isomorphic to a generalised Hamming ball
$\G' = \tbinom {[n]}{\geq k+1} \cup \tbinom {[n-2]}{k}
\cup \tbinom {[n-2]}{k-1}$. We have
$|\A \triangle \G'| < 40\dD \tbinom{n-4}{k-2}$, and
Lemma \ref{localstabcor} improves this to the required approximation
$|\A \triangle \G'| < \dD \tbinom{n-4}{k-2}$. 
\end{proof}
						
\subsection{Stability for Harper's Theorem}
	
We conclude this section by proving our main result
on stability for vertex isoperimetry in the cube.
	
\begin{proof}[Proof of Theorem \ref{thm: VIP stability}]
Let $\dD \in (0,1)$, $c = 10^{-10} \dD$ and $\A \sub \{0,1\}^n$ 
with $|\A| = \tbinom {n}{\geq k+1} + \tbinom {x}{k}$ and 
$|\pl_v(\A)| \leq \Lov {\A } + \tfrac {ck(x-k)}{x^3} \tbinom {x}{k-1}$.
Let $\{(U_i,V_i)\}_{i\in [L_1]}$ and $\{\A_i\}_{i\in [L_1]}$ 
be as in Theorem \ref{thm: compression theorem}. 
We will show for $L_1 \ge i \ge 0$ that there is some 
generalised Hamming ball $\G_i$ with
$|\G_{i} \triangle \A_i| \leq \dD \tbinom {|S|-1}{k-1}$.
As $\A_0 = \A$, the theorem will follow by taking $i = 0$.

We start by considering $\A_{L_1}$, which is `ball-like',
i.e.\ $\A_{L_1} = \tbinom {[n]}{\geq k+1} \cup \B$,
for some $\B \subset \tbinom {[n]}{k}$. 
As $|\A_{L_1}| = |\A|$, we have $|\B| = \tbinom {x}{k}$.
Theorem \ref{thm: compression theorem}.iv gives 
$\tbinom {n}{k} - \tbinom {x}{k} + |\pl(\B)| 
= |\pl_v(\A_{L_1})| \leq |\pl_v(\A_0)| 
\leq  \tbinom {n}{k} - \tbinom {x}{k} 
+ \big ( 1 + \tfrac {ck(x-k)}{x^3} \big ) \tbinom {x}{k-1}$,
so $|\pl(\B)| \leq \big ( 1 + \tfrac {ck(x-k)}{x^3} \big ) \tbinom {x}{k-1}$.  
By Theorem \ref{thm: KK-stab}
(with $\tfrac {ck(x-k)}{x^2}$ in place of $c$)
we have $|\B \triangle \tbinom {S}{k}| 
\leq \tfrac {\dD }{8} \tbinom {|S|-3}{k-2}$ 
for some $S \subset [n]$, so $|\A_{L_1} \triangle \G| 
\leq \tfrac {\dD}{8} \tbinom {|S|-3} {k-2}$,
where $\G = \tbinom {[n]}{\geq k+1} \cup \tbinom {S}{k}$.
Note that $\tbinom {x}{k} = |\B| =
\tbinom {|S|}{k} \pm \tfrac {\dD }{8} \tbinom {|S|-3}{k-2}$. 
If $|S|=n$ then the theorem follows from
Theorem \ref{thm: ball-sized stability} applied to $\A$
(with $\tfrac{2ck(n-k)}{n^2}$ in place of $c$) 
so we may assume $|S| \leq n-1$.
			
Next we show $|\A_i \triangle \G| \leq \dD  \tbinom {|S|-3}{k-2}$ 
for $L_1 \ge i \ge L_0$. The case $i=L_1$ was proved above.
We proceed inductively for $i<L_1$, supposing the required
approximation for $\A_{i+1}$. As $\A_i$ is an upset 
with $|\A_i| = \tbinom {n}{\geq k+1} + \tbinom {x}{k}$ 
and $|\pl_v(\A_i)| \leq \Lov {\A _i} 
+ \tfrac {ck(x-k)}{x^3} \tbinom {x}{k-1}$,
by Lemma \ref{lem: decompressing upsets} we have
$|\A_i \triangle \G| \leq \dD \tbinom {|S|-3}{k-2}$, as required.

To complete the proof, we now show for $L_0 \ge i \ge 0$ that  
there is a generalised Hamming ball $\G_i$ with 
$|\G_i \triangle \A_i| \leq \dD \tbinom {|S|-3}{k-2}$. 
We showed this above for $i = L_0$. 
Proceeding inductively for $i<L_0$,
given the required approximation 
$|\G_{i+1} \triangle \A_{i+1}| \leq \dD \tbinom {|S|-3}{k-2}$
for $\A_{i+1}$, by Lemma \ref{lem: gen ball approx}
we have $|\A_i \triangle \G_i| \leq \dD \tbinom{|S|-3}{k-2}$ 
for some generalised Hamming ball $\G_i$, as required.
\end{proof}

\section{Applications}

In this section we give various applications of our stability
versions of Harper's Theorem and Kruskal--Katona to stability
versions of other results in Extremal Combinatorics.
We start with stability for the Erd\H{o}s--Ko--Rado theorem.
First we recall an 
estimate on shadows known as the `LYM inequality' (see \cite{Bol}):
if $n\geq k \geq 1$ and $\A \sub \tbinom{[n]}{k}$ with $|\A| = \aA \tbinom{n}{k}$
then $|\pl(\A)| \ge \aA \tbinom{n}{k-1}$. This estimate is weaker
than those used elsewhere in the paper but will be convenient 
in some calculations. We will use it in the following form that
follows from Kruskal--Katona, Lemma \ref{kkprops}.i and LYM:
\begin{equation}\label{LYM+}
|\A| = \tbinom{n-1}{k} + \aA \tbinom{n-1}{k-1} \Ra
|\pl(\A)| \ge \tbinom{n-1}{k-1} + \aA \tbinom{n-1}{k-2}.
\end{equation}

\begin{proof}[Proof of Theorem \ref{EKR-stability}] 
We apply a stability analysis to Daykin's proof \cite{DaykinEKR} 
of the Erd\H{o}s--Ko--Rado theorem.
Suppose $\A \subset \tbinom {[n]}{k}$ is intersecting.
Let $\B_{n-k} = \{A^c: A \in \A\}$ and iteratively define 
$\B_{i} := \pl (\B_{i+1})  \subset \tbinom {[n]}{i}$ for $n-k-1 \ge i \ge k$. 
Note that $\A \cap \B_{k} = \es$, as if $A \in \A \cap \B_{k}$ 
then there is $B\in \B_{n-k}$ with $A \subset B$,
i.e.\ $B^c \in \A$ with $A \cap B^c = \es$,
which contradicts $\A$ being intersecting.
In particular, $|\A| + |\B_k| \leq \tbinom {n}{k}$.
To prove the theorem, we will show that 
if $|\A|$ is close to $\tbinom {n-1}{k-1}$ then
this inequality is only possible when $\A$ is close to a star. 

Let $c_0 = 10^{-9} \tT$, $c = 10^{-3}c_0$ and $\dD = \tfrac {c(n-2k)}{n}$. 
Suppose $|\A| > (1-\dD) \tbinom{[n]}{k}$. We may assume $n \geq 16$,
as otherwise $|\A|=\tbinom{n-1}{k-1}$, so $\A$ is a star
by the characterisation of equality in the Erd\H{o}s--Ko--Rado theorem.
Define $x_i \ge k$ by $|\B_i| = \tbinom{x_i}{i}$ for all $i\in [k,n-k]$. 
Note that $x_{i} \geq x_{i+1}$ for $k \leq i <n-k$
by the Lov\'asz form of Kruskal--Katona.
Also, $\tbinom {x_{n-k}}{n-k} = |\B_{n-k}| =
|\A| \geq (1-\dD )\tbinom {n-1}{n-k} > (1+2\dD )^{-1} \tbinom {n-1}{n-k}$.
As $n-k \ge n/2$ this implies
$(1 + \tfrac{4\dD }{n})^{n-k} \tbinom {x_{n-k}}{n-k} > \tbinom {n-1}{n-k}$,
and so by Lemma \ref{binomratio}.i we deduce
$n-1 \le  (1 + \tfrac{4\dD }{n})x_{n-k} \le x_{n-k} + 4\dD$.

We claim that $|\pl(\B_{\ell })| \leq (1 + \tfrac{c_0}{n}) \tbinom {x_{\ell }}{\ell -1}$ 
for some $\ell \in [k,\min(n-k-1,3n/4)]$. Suppose for a contradiction that this fails.
As $x_{\ell }  \geq n-2 \geq 7n/8 \geq (1 + 1/6)\ell $ for all such $\ell$, 
by Lemma \ref{binomratio}.ii applied with $\alpha = 1/6$ and $\tT = \tfrac{c_0}{n}$ 
we have $x_{\ell} \geq (1 + \tfrac {c_0}{15n^2})x_{\ell +1}$. 
Applying this bound iteratively, as $\min (n-2k, n/4 + (n/2 -k)) \geq (n-2k)/2$ 
we obtain $x_k \geq (1 + \tfrac {c_0(n-2k)}{30 n^2}) x_{n-k}$. 
As $x_{n-k} \geq n-1 - 4\dD \geq \tfrac {7n}{8}$ this gives 
$x_k \geq n-1-4\dD + \tfrac {c_0}{40} \cdot \tfrac {n-2k}{n} \ge n-1 + 4\dD$. 
By Lemma \ref{binomratio}.i we deduce $|\B_k| = \tbinom {x_{k}}{k} \geq 
\big ( 1 + \tfrac {4\dD  k}{n}\big ) \tbinom {n-1}{k} 
=  \tbinom {n-1}{k} + \tfrac {4\dD (n-k)}{n} \tbinom {n-1}{k-1}
\geq \tbinom {n-1}{k} + {2\dD  } \tbinom {n-1}{k-1}$ as $k < n/2$. 
This contradicts $\B_k \cap \A = \es$, so the claim holds.

By Theorem \ref{thm: KK-stab}, there is $S \subset [n]$ 
with $|S| \in \{\lfloor x_{\ell }\rfloor, \lceil x_{\ell } \rceil \} \subset \{n-2,n-1,n\}$ 
so that $|\B_{\ell } \triangle \tbinom {S}{\ell }| \leq \tT \tbinom {|S|-1}{\ell -1}$. 
We claim that $|S| = n-1$. To see this, first note that 
$\tbinom {x_{\ell }}{\ell } \leq \tbinom {|S|}{\ell } + \tT \tbinom {|S|-1}{\ell -1} 
\leq \tbinom {|S| + \tT}{\ell }$ by \eqref{mvt}, so $|S| \geq x_{\ell } - \tT > n-2$.
On the other hand, if 
$|\tbinom {[n]}{\ell } \sm \B_{\ell }| \leq \tT \tbinom {n-1}{\ell -1}$ 
then  $|\B_{\ell }| \geq \tbinom {n-1}{\ell } + (1 - \tT) \tbinom {n-1}{\ell -1}$,
so \eqref{LYM+} gives $|\B_{k}| \geq \tbinom {n-1}{k} + (1 - \tT)\tbinom {n-1}{k-1} 
> \tbinom {n}{k} - |\A|$, which is a contradiction. Thus $|S|=n-1$, as claimed.

Now $|\B_{\ell } \cap \tbinom {S}{\ell }| 
\geq \tbinom {|S|}{\ell } - \tT\tbinom {|S|-1}{\ell -1}
= \tbinom {|S|-1}{\ell } + (1-\tT)\tbinom {|S|-1}{\ell -1}$, 
so $|\B_{k} \cap \tbinom {S}{k}| \geq 
\tbinom {|S|-1}{k} + (1-\tT) \tbinom {|S|-1}{k -1}
= \tbinom {|S|}{k} - \tT \tbinom {|S|-1}{k -1}$ by \eqref{LYM+}.
As $\A \cap \B_k = \es $ this gives 
$|\A \cap \tbinom {S}{k}| \leq \tT \tbinom {n-2}{k-1} 
\leq \tT \tbinom {n-1}{k-1}$. This proves the first statement
of the lemma with the star ${\cal S} := \tbinom{[n]}{k} \sm \tbinom {S}{k}$.

Without loss of generality, 
${\cal S} = {\cal S}_1 = \{ A \in \tbinom{[n]}{k}: 1 \in A\}$.
As $\tT<1/2$ and $n \ge 2k$ we have $E := |\A \sm {\cal S}_1|
 \leq \tT \tbinom{n-1}{k-1} \le \tbinom {n-2}{k-1}$.
Let ${\cal C}: = \{C^c: C \in \A \sm {\cal S}_1\} \subset \tbinom {[n]}{n-k}$. 
Noting that $1 \in C$ for all $C \in {\cal C}$, we take ${\cal C}_{n-k-1} 
:= \{ C: \{1\} \cup C \in {\cal C}\} \subset \tbinom {[2,n]}{n-k-1}$, 
and iteratively define ${\cal C}_i = \pl ({\cal C}_{i+1})$ 
for $n-k-2 \ge i \ge k-1$. Then ${\cal A }\cap {\cal S}_1$
and ${\cal C}_{k-1} + 1$ are disjoint subsets of $\tbinom {[2,n]}{k-1} + 1$,
so $|{\cal A } \cap {\cal S}_1| \leq \tbinom {n-1}{k-1} - |{\cal C}_{k-1}| =  
\tbinom {n-1}{k-1} - |\pl ^{(n-2k)}({\cal C}_{n-k-1})| 
\leq \tbinom {n-1}{k-1} - |\pl ^{(n-2k)}(\I^{(n-k-1)}_{E})|$,
where the last inequality holds by Kruskal--Katona (repeatedly applied).
Thus $|\A| = |\A\cap {\cal S}_1| + |\A\sm {\cal S}_1| 
\leq \tbinom {n-1}{k-1} - |\pl^{(n-2k)}(\I^{(n-k-1)} _{E})| + E = |\F_E|$,
as $\I^{(n-k-1)}_{E} + 1 = \{ A^c: A \in \F^{out}_{E} \} \}$,
so ${\cal S}_1 \sm \F^{in}_{E} = \pl^{(n-2k)}(\I^{(n-k-1)}_{E}) + 1$.
The final statement of the theorem holds as if $E = \tbinom {u}{n-k-1}$ then 
$|\pl ^{(n-2k)}(\I^{(n-k-1)}_{E})| \geq \tbinom {u}{k-1}$ 
by the Lov\'asz form of Kruskal--Katona (repeatedly applied).
\end{proof}		

Next we prove our stability version of Katona's Intersection Theorem.

\begin{proof}[Proof of Theorem \ref{Katona-stability}] 
Suppose  $\A \sub \{0,1\}^n$ is $t$-intersecting,
where $t=2k-n \geq 2$ and $|\A| \geq \tbinom{n}{\ge k}
- \tT\dD \tbinom{n-1}{k-1}$.
Let $\B = \{ A^c: A \in \A \}$. Recall that we denote
iterated neighbourhoods in the cube by $N^i(\cdot)$.
Note that $|N^i(\A)|=|N^i(\B)|$ for any $i \ge 0$,
as $\A$ and $\B$ are isomorphic under the automorphism
of the cube that interchanges $0$ and $1$ in each coordinate.
As $\A \sub \{0,1\}^n$ is $t$-intersecting we have
$N^{t-1}(\A) \cap \B = \es$,
so $|N^{t-1}(\A)| \le 2^n - |\B|
\le \tbinom{n}{\ge n-k+1} + \tT\dD \tbinom{n-1}{k-1}$.

We claim that there is $i < t-1$ with
$|\pl_v(N^i(\A))| < (1 + \tfrac{c}{n}) \tbinom{n}{k-i-1}$,
where $c = 10^{-4} \dD$.
To see this claim, note that if it fails then
$\tbinom{n}{\ge n-k+1} + \tT\dD \tbinom{n-1}{k-1}
\ge |N^{t-1}(\A)| 
\ge |\A| + \sum_{i=0}^{t-2} (1 + \tfrac{c}{n}) \tbinom{n}{k-i-1}
\ge \tbinom{n}{\ge n-k+1} - \tT\dD \tbinom{n-1}{k-1}
+ \tfrac{c}{n} \sum_{i=1}^{t-1} \tbinom{n}{k-i}$.
However, if $t < \sqrt{n}$ we have
$\tfrac{c}{n} \sum_{i=1}^{t-1} \tbinom{n}{k-i}
> 10^{-5} \dD (t-1) n^{-3/2} 2^n 
> 2\tT\dD \tbinom{n-1}{k-1}$
or if $t \ge \sqrt{n}$ we have
$\tfrac{c}{n}\sum_{i=1}^{t-1} \tbinom{n}{k-i}
\ge \tfrac{c}{n} (1 - e^{-t^2/2n} ) 2^{n-1} 
> \tT\dD e^{-t^2/2n} 2^{n-1} 
> 2\tT\dD \tbinom{n-1}{k-1}$.
This contradiction proves the claim.

As $|\A| \geq \tbinom{n}{\ge k} - \tT\dD \tbinom{n-1}{k-1}
= \tbinom{n}{\ge k+1} + \tbinom{n-1}{k} + (1-\tT\dD) \tbinom{n-1}{k-1}$, 
by Harper's Theorem and \eqref{LYM+} we have
$|N^i(\A)| \ge \tbinom{n}{\ge k+1-i} 
+ \tbinom{n-1}{k-i} + (1-\tT\dD) \tbinom{n-1}{k-i-1}
= \tbinom{n}{\ge k-i} - \tT\dD \tbinom{n-1}{k-i-1}$.
Recalling that $|N^i(\A)| \le \tbinom{n}{\ge k-i}$,
by Theorem \ref{thm: ball-sized stability} we have
$|N^i(\A)\triangle \mc{H}_A| \leq 5\tT\dD \tbinom{n-1}{k-i-1}$ 
for some Hamming ball $\mc{H}_A$. Equivalently,
$|N^i(\B)\triangle \mc{H}_B| \leq 5\tT\dD \tbinom{n-1}{k-i-1}$ 
for the Hamming ball $\mc{H}_B = \{A^c: A \in \mc{H}_A\}$.

Write $\mc{H}_B = \B^n_{n-k+i}(C)$ for some $C \in \{0,1\}^n$,
so $\mc{H}_A = \B^n_{n-k+i}(C^c)$, and $\B' = N^i(\B) \cap \mc{H}_B$. 
We have $|\B'| \ge \tbinom{n}{\ge k-i} - 5\tT\dD \tbinom{n-1}{k-i-1}
=  \tbinom{n}{\ge k+1-i} 
+ \tbinom{n-1}{k-i} + (1-5\tT\dD) \tbinom{n-1}{k-i-1}$.
By Harper's Theorem and \eqref{LYM+} we have
$|N^{t-1-i}(\B')| \ge \tbinom{n}{\ge k+2-t} 
+ \tbinom{n-1}{k+1-t} + (1-5\tT\dD) \tbinom{n-1}{k-t}
= \tbinom{n}{\ge k+1-t} - 5\tT\dD \tbinom{n-1}{k-t}$.
As $N^{t-1-i}(\B') \sub N^{t-1}(\B) \cap \B^n_{n-k+t-1}(C)$
and $\A \cap N^{t-1}(\B) = \es$ we deduce
$|\A \sm \B^n_{n-k}(C^c)| = |\A \cap \B^n_{n-k+t-1}(C)|
\le 5\tT\dD \tbinom{n-1}{k-t} = 5\tT\dD \tbinom{n-1}{k-1}$,
so $|\A \triangle \B^n_{n-k}(C^c)| \le 11\tT\dD \tbinom{n-1}{k-1}$.

To prove the first statement of the lemma, it remains to show
$\B^n_{n-k}(C^c) = \tbinom{[n]}{\ge k}$, i.e.\ $C=\es$.
Supposing that $C \ne \es$, we will obtain a contradiction
to $\A$ being $t$-intersecting, by finding $A,A' \in \tbinom{[n]}{k}$ such that $A \triangle C$ and $A' \triangle C$ are in $\A$ with $|(A \triangle C) \cap (A' \triangle C)| \le t-1$. To do so, set $\A' = \{A \in \tbinom{[n]}{k}: A \triangle C \in \A \}$ and note that $|{\cal A}'| \geq (1 - 6\theta \delta ) \binom {n}{k}$. For each $\ell \in [0,|C|]$ let $\binom {[n]}{k,\ell } := \{A \in \binom {[n]}{k}: |A \cap C| = \ell \}$. Note that $\cup _{\ell \in [0,|C|]} \binom {[n]}{k,\ell } = \binom {[n]}{k}$ and a small calculation gives $|\cup _{\ell > |C|/2} \binom {[n]}{k,\ell }| \geq \frac {1}{4} \binom {n}{k}$, as $k\geq n/2$. It follows that $\sum _{\ell > |C| /2} |{\cal A}' \cap  \binom {[n]}{k,\ell }| \geq \sum _{\ell > |C| /2} |\binom {[n]}{k,\ell }| - 
6\tT \dD \tbinom {n}{k} \geq \sum _{\ell > |C|/2} (1 - 24\tT \dD ) |\binom {[n]}{k,\ell }|$. Therefore $|{\cal A}' \cap  \binom {[n]}{k,\ell }| \geq (1 - 24\tT \dD ) |\binom {[n]}{k,\ell }| > \frac {1}{2} 
|\binom {[n]}{k,\ell }| > 0$ for some $\ell > |C|/2$. 
Now consider the graph $G$ on vertex set $\binom {[n]}{k,\ell }$ in which $A_1A_2$ is an edge if $A_1$ and $A_2$ are as disjoint as possible when restricted to both $C$ and $[n]\setminus C$, i.e.\ 
$|A_1 \cap A_2 \cap C| = \max (2\ell - |C|, 0) = 2\ell - |C|$ and $|A_1 \cap A_2 \cap ([n]\setminus C)| = \max (2(k-\ell )- (n-|C|), 0)$. 
Clearly $G$ is regular and non-empty
(we cannot have $\ell=|C|=k$ as this would give
$\es \in \A$, but $\A$ is $t$-intersecting). 
Therefore ${\cal A}' \cap \binom {[n]}{k,\ell }$ contains an edge $A_1A_2 \in E(G)$. But this gives $|(A_1\triangle C) \cap (A_2 \triangle C)| = 
|A_1 \cap A_2 \cap ([n]\setminus C)| = \max (2(k-\ell )-(n-|C|), 0) < t$, since $\ell > |C|/2$. This contradiction gives $C=\es$.

Writing $E = |\A \sm \tbinom{[n]}{\ge k}|$
and $D = |\tbinom{[n]}{\ge k} \sm \A|$,
it remains to show $D \ge E'$. To see this,
suppose for a contradiction that $D<E'$.
By definition of $E'$ we have
$|\pl^{t-1}(I^{(k)}_{\tbinom{n}{k}-D})| > \tbinom{n}{n-k+1}-E$
and $|\pl^{t-1}(I^{(k-1)}_E)| \ge E'$ (otherwise 
$\{A^c: A \in \pl^{t-1}(I^{(k-1)}_E)\} \sub \tbinom{[n]}{k}$
contradicts the definition of $E'$).
Then Lemma \ref{lem: local stability} gives
$|N^{t-1}(\A)| \geq |N^{t-1}(\J_{m,D,E})|
> \tbinom{n}{\ge n-k+1} - E + E'$,
so $|\A|=|\B| \le 2^n - |N^{t-1}(\A)| 
< \tbinom{n}{\ge k}-E'+E < |\A|$, contradiction.
Therefore $D \ge E'$, so $|\A| \le |\G_E|$.
\end{proof}		

Our final application is a stability version of Frankl's bound
for the Erd\H{o}s Matching Conjecture.

\begin{proof}[Proof of Theorem \ref{match-stability}] 
Suppose $\A \sub \tbinom{[n]}{k}$ has no matching of size $t+1$ 
and $|\A| > \tbinom{n}{k} - (1 + \tfrac{rc}{n}) \tbinom{n-t}{k}$.
Let $\A'$ be the set of $A' \in \tbinom{[n]}{k+r}$ that contain 
some $A \in \A$. Then $\A'$ has no matching of size $t+1$,
so $|\A'| \le \tbinom{n}{k+r} - \tbinom{n-t}{k-r}$ by \cite{Frankl-match}.
Let $\B = \tbinom{[n]}{k} \sm \A$ and $\B' = \tbinom{[n]}{k+r} \sm \A'$.
Then $|\B'| \ge \tbinom{n-t}{k+r}$ and $\pl^r(\B') \sub \B$,
so $|\pl^r(\B')| \le \tbinom{n}{k} - |\A| 
< (1+\tfrac{rc}{n}) \tbinom{n-t}{k}$.

We now proceed similarly to the proof of Theorem \ref{EKR-stability}.
We define $\B_{k+r},\dots,\B_k$ by $\B_{k+r} = \B'$
and $\B_i = \pl(\B_{i+1})$ for $k+r>i \ge k$.
We define $x_i \ge k$ by $|\B_i| = \tbinom {x_i}{i}$
and note that $x_{i} \geq x_{i+1}$ for $k+r>i \ge k$.
Then $\tbinom{x_{k+r}}{k+r} = |\B'| \ge \tbinom{n-t}{k+r}$
gives $x_{k+r} \ge n-t$ and $\tbinom{x_k}{k} = |\B_k|
< (1+\tfrac{rc}{n}) \tbinom{n-t}{k}$ gives
$x_k < (1+\tfrac{rc}{kn}) (n-t)$ by Lemma \ref{binomratio}.i.
 
Now we claim that $|\pl(\B_{\ell })| \leq 
(1 + \tfrac{4c}{n}) \tbinom {x_{\ell }}{\ell-1}$ 
for some $\ell \in [k+1,k+r]$. 
Suppose for a contradiction that this fails.
As $x_\ell \ge n-t \ge (t+1)\ell$ for $\ell \le k+r$,
by Lemma \ref{binomratio}.ii we have 
$x_{\ell} \geq (1 + \tfrac{c}{kn}) x_{\ell +1}$. 
However, this implies 
$x_k \ge (1 + \tfrac{c}{kn})^r x_{k+r}
\ge (1 + \tfrac{rc}{kn})(n-t)$,
which contradicts our previous upper bound, so the claim holds.

By Theorem \ref{thm: KK-stab}, there is $S \subset [n]$ 
with $|S| \in \{\bfl{x_\ell},\bcl{x_\ell}\}$ so that 
$|\B_{\ell } \triangle \tbinom {S}{\ell }| \leq \dD \tbinom {|S|-1}{\ell -1}$. 
We claim that $|S| = n-t$. To see this, first note that 
$\tbinom {x_{\ell }}{\ell } \leq \tbinom {|S|}{\ell } + \dD \tbinom {|S|-1}{\ell -1} 
\leq \tbinom {|S| + \dD}{\ell }$ by \eqref{mvt}, 
so $|S| \geq x_{\ell } - \dD > n-t-1$.
On the other hand, if $|S| \ge n-t+1$ then
$|\B_{\ell }| \geq \tbinom{n-t+1}{\ell} - \dD \tbinom {n-t}{\ell-1}
= \tbinom{n-t}{\ell} + (1-\dD) \tbinom {n-t}{\ell-1}$,
so \eqref{LYM+} gives 
$|\B_{k}| \geq \tbinom {n-t}{k} + (1-\dD)\tbinom {n-t}{k-1}$.
As $\dD \leq 1/2$ and $r \le k$ this contradicts the earlier bound
$|\B_{k}| < (1+\tfrac{rc}{n}) \tbinom{n-t}{k}$, so the claim holds.

Now $|\B_{\ell } \cap \tbinom {S}{\ell }| 
\geq \tbinom {|S|}{\ell } - \dD\tbinom {|S|-1}{\ell -1}
= \tbinom {|S|-1}{\ell } + (1-\dD)\tbinom {|S|-1}{\ell -1}$, 
so $|\B_{k} \cap \tbinom {S}{k}| \geq 
\tbinom {|S|-1}{k} + (1-\dD) \tbinom {|S|-1}{k -1}
= \tbinom {|S|}{k} - \dD \tbinom {|S|-1}{k-1}$ by \eqref{LYM+}.
Setting $T = S^c$ and using $r \leq k$, we deduce that
$|\A \triangle {\cal S}_T| < 2\dD \tbinom {|S|-1}{k-1}
+ \tfrac{rc}{n} \tbinom{n-t}{k}
\le 3\dD \tbinom {n-t-1}{k-1}$.\end{proof}

\section{Concluding remarks}

We have obtained tight stability results on various problems
for families that are close to extremal. One consequence of
our stability version of Harper's vertex isoperimetric inequality
is a characterisation of the extremal families for sets of the same size as a generalised Hamming ball;
the latter was independently obtained by Raty \cite{Raty}. 
Our stability result
in the case of ball-sized sets applies to families with vertex boundary
that is within a factor of $1+O(1/n)$ of the minimum possible.
We gave an example to show that the same accuracy of stability
does not hold for larger vertex boundary, but this still leaves open
the question of establishing some stability 
for a wider range of approximations to the minimum.
Recently this has been achieved for ball-sized sets, 
where the ball has radius $o(\log n)$, 
in independent work (with a different proof technique) 
by Przykucki and Roberts (personal communication). 

We would be particularly interested in knowing
the level of isoperimetric approximation required for 
stability in the dense case (families of size $\Omega (2^n)$);
we believe that the following may be true.

\begin{conjecture}
	Given $\eps > 0$ there is $\delta > 0$ such that the 
	following holds. Suppose ${\cal A} \subset \{0,1\}^n$ 
	with $|{\cal A}| \geq \eps 2^n$ 
	and $|{\partial }_v({\cal A})| \leq \big (1 + 
	\frac {\delta }{\sqrt n} \big ) \Lov {{\cal A}}$. 
	Then $|{\cal A} \triangle {\cal H}| \leq 
	\eps |{\cal A}|$ for some Hamming ball ${\cal H}$.
\end{conjecture}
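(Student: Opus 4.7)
The plan is to follow the blueprint of Theorem \ref{thm: VIP stability}, substituting a coarse (dense-regime) Kruskal--Katona stability statement for the fine-scale Theorem \ref{thm: KK-stab}.

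First I would apply Theorem \ref{thm: compression theorem} to transform $\A$ into a ball-like family $\A^* = \tbinom{[n]}{\ge k+1} \cup \B$ with $|\B| = \tbinom{x}{k}$, preserving cardinality and not increasing the vertex boundary. Since $|\A| \ge \eps 2^n$, the DeMoivre--Laplace approximation forces $k = n/2 \pm O(\sqrt{n \log(1/\eps)})$, with $x$ in a comparable window, so that $\Lov{\A}$, $\tbinom{n}{k}$ and $\tbinom{x}{k-1}$ all have order $2^n/\sqrt{n}$. The hypothesis then translates to $|\pl(\B)| \le (1 + O(\delta/\sqrt{n})) \tbinom{x}{k-1}$ for the top-layer part $\B$, analogously to the derivation at the start of the proof of Theorem \ref{thm: VIP stability}.

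Next I would establish a dense-regime Kruskal--Katona stability: for $\B \sub \tbinom{[n]}{k}$ with $|\B| \ge \eps \tbinom{n}{k}$, $k$ in the above range, and $|\pl(\B)| \le (1 + \delta/\sqrt{n})\tbinom{x}{k-1}$, conclude that $|\B \triangle \tbinom{S}{k}| \le \eps' \tbinom{n}{k}$ for some $S \sub [n]$ with $|S|$ near $x$. The key point is that the conclusion is only at the coarse scale of $|\B|$, matching the conclusion demanded by the conjecture. Finally I would reverse the compression steps: at this coarse scale the per-step counting losses in Lemmas \ref{lem: decompressing upsets} and \ref{lem: gen ball approx} are absorbed into the allowed error, although a uniform accounting over all compression steps requires a quantitative check. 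The second generalised Hamming ball $\G_2$ is automatically ruled out in the dense regime since it requires $|S|$ close to $n$, outside the range forced by $|\A| = \Theta(2^n)$.

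The main obstacle is the dense-regime Kruskal--Katona stability above. The compression-based machinery of Lemma \ref{lem: KK clique size} and Lemma \ref{defectapp2} gives stability at the $1/x \sim 1/n$ scale but not at the coarser $1/\sqrt{n}$ scale required here; indeed the local-stability boost of Lemma \ref{localstabcor}, which is used throughout Section 4 to repair cumulative errors, depends crucially on the fine-scale hypothesis and is not available. A natural route is via Fourier-analytic tools, in particular hypercontractivity and junta-approximation theorems in the spirit of Friedgut, which can accommodate the weaker $\delta/\sqrt{n}$ hypothesis at the price of only concluding $\eps'$-closeness to a clique on the coarse scale of $|\B|$. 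Integrating such analytic input cleanly with the combinatorial compression framework of Section 4, and in particular controlling the telescoping error under reverse compression without a local-stability boost, appears to be the essential new content needed for the conjecture.
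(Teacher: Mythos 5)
This statement is a \emph{conjecture} in the paper: the authors explicitly leave it open, so there is no proof of it in the paper to compare against, and your proposal should be judged as a research sketch rather than as a reconstruction of a known argument.

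Your diagnosis of the difficulty is essentially correct, and indeed your proposal does not claim to be a proof --- you end by identifying the missing ingredient. The key obstruction, which you name accurately, is that the entire decompression machinery of Section 4 is self-correcting precisely because of the \emph{fine-scale} local stability: Lemma \ref{localstabcor} (via Lemma \ref{lem: KK clique size}.ii) repeatedly re-projects the approximation back to accuracy $\dD \tbinom{|S|-3}{k-2}$ after each of the (potentially very many) compression steps. Under the conjecture's hypothesis $|\pl_v(\A)| \le (1+\delta/\sqrt n)\Lov{\A}$ this boost is simply unavailable, and the number $L_1$ of compression steps is not bounded by anything like $\sqrt n$, so ``absorbing per-step losses into the allowed error'' cannot be made to work by any uniform accounting. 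This is a structural failure of the blueprint, not a quantitative loose end. Moreover the ``dense-regime Kruskal--Katona stability'' you invoke as the other pillar (stability at the $\delta/\sqrt n$ scale with conclusion at scale $\eps'\tbinom{n}{k}$) is not in the paper and, to my knowledge, not in the literature; it would itself be a substantial new result, plausibly of the same order of difficulty as the conjecture. So while your sketch correctly places the problem in the context of Section 4 and correctly locates where the paper's method runs out of steam, the two ingredients it requires (a new KK stability theorem and an error-accumulation mechanism that replaces the local-stability boost) are both genuinely open; the paper offers no route to either, which is why the statement is stated as a conjecture rather than proved.
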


\noindent If true this dependence would be tight, as shown by taking ${\cal A} = {\cal H} \times \{0,1\}^d$ where ${\cal H}$  is a Hamming ball of size $2^{n-d-1}$ (say) in $\{0,1\}^{n-d}$ with $d = \Theta _{\eps }(n^{1/2})$.


\begin{thebibliography}{99}

\bibitem{Bol} B. Bollob\'as, \emph{Combinatorics}, Cambridge University Press, 1986.

\bibitem{BL} B. Bollob\'as and I. Leader, 
Compressions and isoperimetric inequalities, 
\textit{J. Combin. Theory Ser. A} \textbf{56} (1991), 47--62.

\bibitem{BNR} B. Bollob\'as, B. Narayanan, and A. Raigorodskii, 
On the stability of the Erd\H{o}s--Ko--Rado theorem, 
\textit{J. Combin. Theory Ser. A} \textbf{137} (2016), 64-78.

\bibitem{DT} S. Das and T. Tran, 
Removal and Stability for Erd\H{o}s--Ko--Rado,
\textit{SIAM J. Disc. Math.} \textbf{30} (2016), 1102--1114.

\bibitem{Daykin} D. E. Daykin, 
A Simple Proof of the Kruskal--Katona Theorem, 
\textit{J. Combin. Theory Ser. A} \textbf{17} (1974), 252--253. 

\bibitem{DaykinEKR} D. E. Daykin, 
Erd\H{o}s--Ko--Rado from Kruskal--Katona, 
\textit{J. Combin. Theory Ser. A} \textbf{17} (1974), 254--255. 

\bibitem{Dev-Kahn} P. Devlin and J. Kahn, 
On ``stability'' in the Erd\H{o}s--Ko--Rado Theorem,
\textit{SIAM J. Disc. Math.} \textbf{30}  (2016), 1283--1289.

\bibitem{Ellis} D. Ellis, Almost isoperimetric subsets of the discrete cube, 
\textit{Combin. Probab. Comput.} \textbf{20} (2011), 363--380.

\bibitem{EKL} D. Ellis, N. Keller and N. Lifshitz, 
On the structure of subsets of the discrete cube with small edge boundary, \textit{arXiv:1612.06680}.

\bibitem{EKL-EKR} D. Ellis, N. Keller and N. Lifshitz, 
Stability versions of Erd\H{o}s--Ko--Rado type theorems, via isoperimetry, 
\textit{arXiv:1604.02160}

\bibitem{ematch} P. Erd\H{o}s, 
A problem on independent r-tuples, 
{\em Ann. Univ. Sci. Budapest} {\bf 8} (1965), 93--95.

\bibitem{EKR} P. Erd\H{o}s, C. Ko and R. Rado, 
Intersection theorems for systems of finite sets, 
\emph{Quart. J. Math. Oxford Ser. 2} \textbf{12} (1961), 313--320.

\bibitem{Frankl-EKR-stab} P. Frankl, 
Erd\H{o}s--Ko--Rado theorem with conditions on the maximal degree, 
\textit{J. Combin. Theory Ser. A} \textbf{46} (1987), 252--263.

\bibitem{Frankl-match} P. Frankl, 
Improved bounds for Erd\H{o}s' Matching Conjecture, 
{\em J. Combin. Theory Ser. A} {\bf 120} (2013), 1068--1072.

\bibitem{FF} P. Frankl and Z. F\"uredi, 
A short proof for a theorem of Harper about Hamming-spheres, 
\textit{Disc. Math.} \textbf{31} (1981), 313--313.

\bibitem{KK-Takagi} P. Frankl, M. Matsumoto, I. Ruzsa and N. Tokushige, 
Minimum shadows in uniform hypergraphs and a generalization of the Takagi function, 
\textit{J. Combin. Theory Ser. A}, \textbf{69} (1995), 125--148.

\bibitem{Friedgut} E. Friedgut, 
Boolean functions with low average sensitivity depend on few coordinates,
\textit{Combinatorica} \textbf{18} (1998), 27--35.

\bibitem{Friedgut-EKR} E. Friedgut, 
On the measure of intersecting families, uniqueness and stability,
\emph{Combinatorica}, \textbf{28} (2008), 503--528.

\bibitem{Furedi-Griggs} Z. F\"uredi and J. R. Griggs, 
Families of finite sets with minimum shadows, 
\textit{Combinatorica} \textbf{4} (1986), 355--363.

\bibitem{Harper} L. H. Harper, 
Optimal numberings and isoperimetric problems on graphs, 
\textit{J. Combin. Theory} \textbf{1} (1966), 385--393.

\bibitem{KKL} J. Kahn, G. Kalai and N. Linial, 
The influence of variables on Boolean functions,
Proceedings of the 29th Annual IEEE 
Symposium on Foundations of Computer Science (1988), 68--80.

\bibitem{Ka} G.O.H. Katona, A theorem of finite sets, 
\textit{Theory of graphs} (Proc. Colloq., Tihany, 1966), 187--207,
Academic Press, New York, 1968.

\bibitem{Ka2} G.O.H. Katona, 
Intersection theorems for systems of finite sets, 
{\em Acta Math. Acad. Sci. Hungar.} {\bf 15} (1964), 329--337.

\bibitem{Keevash} P. Keevash, Shadows and intersections: stability and new proofs,
\textit{Adv. Math} \textbf{218} (2008), 1685--1703.

\bibitem{KL} P. Keevash and E. Long, 
A stability result for the cube edge isoperimetric inequality, 
\textit{J. Combin. Theory Ser. A} \textbf{155} (2018), 360--375.

\bibitem{Keller-Lifshitz} N. Keller and N. Lifshitz, 
Approximation of biased Boolean functions of small total influence by DNF's, \textit{arXiv:1703.10116}.

\bibitem{Kr} J. B. Kruskal, The number of simplices in a complex, 
\textit{Mathematical optimization techniques}, 251--278 Univ. of California Press, Berkeley, 1963.

\bibitem{Lovasz} L. Lov\'asz, \textit{Combinatorial Problems and Exercises}, North-Holland, Amsterdam, 1993.

\bibitem{Mors} M. M\"ors, A generalization of a theorem of Kruskal, 
\textit{Graphs.	Combin.} \textbf{1} (1985), 167--183.

\bibitem{OD-W} R. O'Donnell and  K. Wimmer, KKL, Kruskal--Katona and monotone nets, 
\emph{50th Annual Symposium on Foundations of Computer Science (FOCS 2009)}, 725--734, 2009.

\bibitem{Raty} E. Raty, Uniqueness in Harper's vertex-isoperimetric theorem, \emph{arXiv.1806.11061}.


\end{thebibliography}
\end{document}